\newtheorem{theorem}{Theorem}[section]
\newtheorem{lemma}[theorem]{Lemma}
\newtheorem{definition}[theorem]{Definition}
\newtheorem{proposition}[theorem]{Proposition}
\newtheorem{conjecture}[theorem]{Conjecture}
\newtheorem{corollary}[theorem]{Corollary}
\numberwithin{equation}{section}
\newcommand{\ilim}{\mathop{\varinjlim}\limits}
\renewcommand{\H}{{\mathcal H}_q}
\newcommand{\F}{\mathbb F_q}
\newcommand{\B}{\mathbb B}
\newcommand{\U}{\mathbb U}
\newcommand{\GLB}{\mathbb{ GLB}}
\newcommand{\GLU}{\mathbb{ GLU}}
\newcommand{\SLU}{\mathbb{ SLU}}
\newcommand{\Y}{\mathbb Y}
\newcommand{\A}{\mathcal A}
\newcommand{\GL}[1]{\mathbb{GL}(#1,q)}
\newcommand{\SL}[1]{\mathbb{SL}(#1,q)}
\newcommand{\C}{\mathcal C}
\newcommand{\CY}{\C \Y}
\newcommand{\Sym}{\mathfrak S}
\newcommand{\St}{\mathcal St}
\newcommand{\Cyl}{{\rm Cyl}}
\newcommand{\Sp} {\mathcal Sp}
\newcommand{\BI}{\mathbb {BI}}
\newcommand{\Uni}{{\rm \mathbf Uni}}
\begin{document}


\title
{Finite traces and representations of the group of infinite matrices over a finite
field.\footnote{MSC 2010: 22D10, 22D25, 20G40. \\ Keywords: Infinite-dimensional group; finite
field; factor representation; Hecke algebra}}


\author{Vadim Gorin}
\address{Massachusetts Institute of Technology, Cambridge, MA, USA and
Institute for Information Transmission Problems of Russian Academy of Sciences, Moscow, Russia}
 \ead{vadicgor@gmail.com}

\author{Sergei Kerov }

\address{(1946--2000)}

\author{Anatoly Vershik}
\address{St.Petersburg Department of
V.~A.~Steklov   Institute   of   Mathematics of    Russian   Academy   of   Sciences, Saint
Petersburg, Russia} \ead{avershik@gmail.com}

\begin{abstract}
The article is devoted to the representation theory of locally compact infinite-dimensional group
$\GLB$ of almost upper-triangular infinite matrices over the finite field with $q$ elements.  This
group was defined by S.K., A.V., and Andrei Zelevinsky in 1982 as an adequate $n=\infty$ analogue
of general linear groups $\GL{n}$. It serves as an alternative to $\GL{\infty}$, whose
representation theory is poor.

Our most important results are the description of semi-finite unipotent traces (characters) of the
group $\GLB$ via certain probability measures on the Borel subgroup $\B$ and the construction of
the corresponding von Neumann factor representations of type $II_\infty$.

As a main tool we use the subalgebra  $\A(\GLB)$ of smooth functions in the group algebra
$L_1(\GLB)$. This subalgebra is an inductive limit of the finite--dimensional group algebras
${\mathbb C}(\GL{n})$ under parabolic embeddings.

As in other examples of the asymptotic representation theory we
 discover remarkable properties of the infinite case which does not take place for finite groups, like multiplicativity of indecomposable characters
 or connections to probabilistic concepts.

 The infinite dimensional Iwahori-Hecke algebra $\H(\infty)$ plays a special role in our
 considerations and allows to understand the deep analogy of the developed theory with the representation theory of infinite symmetric
 group $S(\infty)$
 which had been intensively studied in numerous previous papers.
\end{abstract}

\maketitle

\begin{flushright}
To the memory of Andrei Zelevinsky.
\end{flushright}

\tableofcontents

\section*{Historical preface}

My joint work with S.~Kerov on the asymptotic representation theory of the matrix groups $\GL{n}$
over finite field as the rank $n$ grows to infinity, was started at the beginning of 80s as a
continuation of our papers devoted to analogous problems for symmetric groups of growing ranks at
the end of 70-th. It is a part of what I called ``the asymptotic representation theory''.

 The ``trivial'' embedding $\GL{n}\hookrightarrow\GL{n+1}$ does not lead to an interesting or
useful theory. However, another ``true'' (i.e.\ parabolic) embedding of the group algebras of
$\GL{n}$ was well-known starting from the very first papers on the representation theory of
$\GL{n}$ (see \cite{Green}, \cite{Zel}, \cite{F}, etc). It was used by A.~Zelevinsky and us (see
\cite{James}) to define a natural \emph{limit object} (i.e.\
 inductive limit) which is the group (named $\GLB$) of infinite matrices over finite field with finitely
 many non-zero elements below the main diagonal. The results of the book \cite{Zel} in which the
 representation theory of $\GL{n}$ is studied via the Hopf algebras theory, are not directly
 related to the asymptotic representation theory (because it does not consider inductive limits)
  but are substantial and important for the problems' setup.

However, the project was suspended and we returned to this topic only in the middle of 90s with the
first article \cite{VK98} appearing in 1998. In this paper we gave main definitions and sketched
the plan of further research. In 1998-1999 we prepared some more detailed texts. Already after the
sad death of my former student and coauthor an outstanding mathematician and person S.~Kerov, an
improved version of these texts was published \cite{VK_4d}. All these texts contained (mostly
without proofs) a number of statements forming an initial foundation of the asymptotic
representation theory of group $\GLB$.

 The great contribution of of Sergei Kerov (12.06.1946-30.07.2000), and of Andrey Zelevinsky (30.01.1953 -10.04.2013)
 to the different areas of the representation theory will be appreciated by mathematicians of the next generations.

\smallskip

V.~Gorin following the general plan contained in \cite{VK_4d} interpreted and supplemented with
complete proofs the statements of that article. The present paper is a result of this work. We can
say that this article, finally, concludes the first step in the study of the representation theory
of $\GLB$.

\begin{flushright}
 A.~Vershik
\end{flushright}

\section{Introduction}

\subsection{Overview}

Let $\F$ be the finite field with $q$ elements and let $\GL{n}$ denote the group of all invertible
$n\times n$ matrices over $\F$. In the present article we study the group $\GLB$ of all almost
upper-triangular matrices (i.e.\ containing finitely many nonzero elements below diagonal) over
$\F$.  In other words, if $V_\infty$ is the (countable) vector space of all finite vectors in
$\F^\infty$ and $V_n$ is the subspace spanned by the first $n$ basis vectors, then $\GLB$ consists
of all linear transforms of $V_{\infty}$ preserving all but finitely many spaces $V_n$.

We view group $\GLB$ (and some other closely related groups such as $\GLU$ and $\SLU$ mentioned below) as a natural
\emph{infinite--dimensional analogue} of groups $\GL{n}$, extending and continuing the classical
representation theory of these groups. We explain this point of view in more detail in Section
\ref{Section_comments}.

An important feature of the infinite-dimensional group $\GLB$ is its \emph{local compactness}.
$\GLB$ possesses the Haar measure $\mu_{\GLB}$ which allows to introduce the group algebra
$L_1(\GLB,\mu_{\GLB})$ with multiplication given by the convolution. In our study we intensively
use an important dense subalgebra $\A(\GLB)$ of $L_{1}(\GLB,\mu_\GLB)$ consisting of all
continuous functions with compact support taking only finitely many values.

As opposed to semisimple Lie groups over arbitrary fields \cite{Ber} the group $\GLB$ \emph{is not}
a type $I$ group. While $\GLB$ \emph{has no non-trivial finite characters}, but is has a rich
family of \emph{semi--finite traces} (characters), which might be singular on the group itself, but
are finite on $\A(\GLB)$. These traces give rise to type $II$ \emph{factor representations} and
form a basis for the representation theory and harmonic analysis on $\GLB$. They are the main
object of our interest.

In the present article  we concentrate on the following topics, where we prove a variety of
results.
\begin{itemize}
\item We study the algebra $L_1(\GLB,\mu_\GLB)$ and its dense
locally semisimple subalgebra $\A(\GLB)$. The structure of the latter as an inductive limit of
finite-dimensional algebras is explained.

\item We show that $\A(\GLB)$ is a direct sum of certain ideals. The ideals are not isomorphic, but each of them is
Morita equivalent to the \emph{infinite-dimensional Iwahori-Hecke algebra} $\H(\infty)$ (which is
the inductive limit of finite--dimensional $\H(n)$). In our setting this means that the traces of
the ideals are \emph{canonically identified} with those of $\H(\infty)$. As a consequence we
describe \emph{all finite traces} of $\A(\GLB)$.

\item We distinguish and explore the remarkable properties of the family of unipotent (principal) traces,
which are most closely related infinite-dimensional Iwahori-Hecke algebra $\H(\infty)$, infinite
symmetric group $S(\infty)$ and unipotent representations of $\GL{n}$.
\item We show that each unipotent trace can be identified with conjugation--invariant ergodic
\emph{probability measure on the Borel subgroup} $\B\subset\GLB$ of all upper-triangular matrices.
A number of theorems and conjectures on the structure of such measures is presented.
\item We give a construction for the wide family of representations of $\GLB$ based on the natural
action of $\GLB$ in the spaces of flags in the infinite-dimensional vector space over $\F$ and in
the \emph{principal grouppoid} defined by this action. We prove that these representations are von
Neumann type $II_\infty$ factor representations and compute their traces, which are identified
with extreme (indecomposable) unipotent traces of $\A(\GLB)$ with explicit parameters in our
classification of all finite traces.
\item We study the (bi-)--regular representation of $\GLB$, show that is possesses a
natural trace which is finite on $\A(\GLB)$ and decompose this trace into extremes.
\end{itemize}

In Sections \ref{Section_intro_contents}--\ref{section_intro_before_comments} a more detailed
description of our work is given. In Section \ref{Section_comments} we motivate our definitions
and some of the choices, which otherwise might seem arbitrary. We also summarize the similarities
with asymptotic representation theory of symmetric groups in the same section. The brief list of
key notations and theorems is given in Section \ref{Section_intro_notations}. Finally, we want to
remark that most of the results of the present paper were announced (without proofs) in articles
\cite{VK98}, \cite{V_lectures}, \cite{VK_4d}. In this text we refine those statements and give
full proofs of them, also a number of entirely new results is presented.

\subsection{Schwartz-Bruhat algebra $\A(\GLB)$ and its traces}

\label{Section_intro_contents}

One of the central objects of our study is algebra $\A(\GLB)\subset L_{1}(\GLB,\mu_\GLB)$
consisting of all continuous functions on $\GLB$ with compact support taking only finitely many
values. $\A(\GLB)$ can be viewed as an analogue of the algebra of smooth functions or
Schwartz-Bruhat algebra in the theory of linear $p$-adic groups, see e.g.\ \cite{BZ}, \cite{GGP}.

In Section \ref{Section_AG_as_semisimple} we show that algebra $\A(\GLB)$ is an inductive limit of
group algebras $\A(\GLB)_n\simeq\mathbb C(\GL{n}))$. However, the arising embeddings $i_n:\mathbb
C(\GL{n}) \hookrightarrow \mathbb C(\GL{n+1})$ are not induced by group embeddings, instead we
should use \emph{parabolic emebeddings}, which are averagings by certain subgroups. This
description implies that $\A(\GLB)$ is a \emph{locally semisimple algebra}, which means, in
particular, that the enveloping $C^*$--algebra of $\GLB$ is an approximately finite dimensional
{(AF)} algebra; see e.g.\ \cite{Br}, \cite{VK_AF}, \cite{SV}, \cite{Kerov_book}.

As for every locally semisimple algebra the structure of the algebra $\A(\GLB)$ is uniquely
defined by its \emph{Bratteli diagram}. Recall that Bratteli diagram is a graded graph with
vertices at level $n$ symbolizing irreducible representations of $\A(\GLB)_n$ and edges between
adjacent levels symbolizing the inclusion relations of the representations. The inclusions $i_n:
\A(\GLB)_n \hookrightarrow \A(\GLB)_{n+1}$ are not unital and the algebra $\A(\GLB)$ has no unit
element, thus, each vertex of the Bratteli diagram is to be supplemented with additional label
which is the dimension of the corresponding irreducible representation. In our case these numbers
are dimensions of irreducible representations of groups $\GL{n}$ and they admit explicit formulas
(e.g.\ a $q$--analogue of the classical hook formula, see e.g.\ \cite[Chapter IV]{M})

We show in Section \ref{Section_AG_as_semisimple} that the Bratteli diagram of $\A(\GLB)$ is a
\emph{disjoint union} of countably many copies of the \emph{Young graph} $\Y$ with shifted grading.
Recall that level $n$ of $\Y$ consist of all Young diagrams with $n$ boxes (equivalently,
partitions of $n$) with edges joining the diagrams which differ by addition of a single box.
Therefore, the algebra $\A(\GLB)$ is a direct sum of countably many (non-isomorphic) ideals
corresponding to different copies of $\Y$.

We also show that infinite \emph{Iwahori-Hecke algebra} $\H(\infty)$ is naturally embedded into
$\A(\GLB)$, moreover, it is a subset of one of the above ideals. This somehow explains the
appearance of the Young graph. Indeed, $\Y$ (without any labels, since $\H(\infty)$ contains the
unit element) is the Bratteli diagram of $\H(\infty)$ as follows from the fact that $\H(\infty)$
and the group algebra of $S(\infty)$ are isomorphic, see e.g.\ \cite{VK_Hecke}.

\bigskip

We further concentrate on the representation theory of $\A(\GLB)$ and $\GLB$. It is well-known that
``big'' groups, such as $S(\infty)$, $U(\infty)$, $\GL{\infty}$ are \emph{wild} and in order to get
a well-behaved representation theory one has to restrict the class of the representations under
consideration. There are two approaches here, one of them deals with von Neumann factor
representations, see e.g.\ \cite{Th_characters}, and another one studies representations of
$(G,K)$--pairs, see e.g.\ \cite{Olsh_Howe}. In both approaches representations are in
correspondence with characters (or traces) of a group (or algebra), thus, it is crucial to obtain
the classification of traces.

In Section \ref{Section_character_classif} we describe the set of finite traces of $\A(\GLB)$. We
show (see Theorem \ref{theorem_characters_of_GLB}) that this set is a simplicial cone with extreme
rays parameterized by triplets $(f,\alpha,\beta)$, where $\alpha=\{\alpha_i\}$ and
$\beta=\{\beta_i\}$ are non-increasing sequences of non-negative reals satisfying:
$$
 \alpha_1\ge\alpha_2\ge\dots\ge 0,\quad \beta_1\ge\beta_2\ge\dots\ge 0,\quad \sum_{i=1}^{\infty}
 (\alpha_i+\beta_i)\le 1,
$$
and $f$ is an element of a certain (explicit) countable set $\CY'$. This result is to a large
extent based on the classification theorem for characters of the infinite symmetric group
$S(\infty)$ first proved by Thoma in \cite{Thoma1}. For each extreme trace parameterized by
$(f,\alpha,\beta)$ we give a formula for its values on $\A(\GLB)$ in terms of the values of
characters of irreducible representations of $\GL{n}$.

\subsection{Unipotent traces}

Among the traces of $\A(\GLB)$ we distinguish a class of \emph{unipotent} traces, which are
closely related to the same-named representations of $\GL{n}$. In our classification of traces the
unipotent extreme ones are such that $f=f_0$, where $f_0$ is a certain special element of $\CY'$.
They are distinguished by the fact that the values on (at least some) elements of
$\H(\infty)\subset \A(\GLB)$ are non-zero. On the contrary, if $f\ne f_0$, then for arbitrary
$\alpha$ and $\beta$ the corresponding trace vanishes on $\H(\infty)$. Moreover, the restriction
of extreme unipotent trace on $\A(\GLB)_n\simeq \mathbb C(\GL{n})$ is a linear combination of
traces of irreducible unipotent representations of $\GL{n}$, see \cite{Steinberg},
\cite{James_unipotent} for more information on unipotent representation of $\GL{n}$.

The extreme unipotent traces are in one-to-one correspondence with \emph{extreme traces of
Iwahori--Hecke algebra} $\H(\infty)$ (and, thus, with characters of the infinite symmetric group
$S(\infty)$). Extreme  traces of $\H(\infty)$ are parameterized (see \cite{VK_Hecke} and also
\cite[Section 7]{Me}) by two sequences
$$
 \alpha_1\ge\alpha_2\ge\dots \ge, \quad \beta_1\ge\beta_2\ge\dots \ge 0,\quad \sum_i
 (\alpha_i+\beta_i)\le 1
$$
and the (normalized) restriction of the extreme unipotent trace of $\A(\GLB)$ on
$\H(\infty)\subset\A(\GLB)$ is extreme  trace of $\H(\infty)$ with the same parameters. This
provides an infinite-dimensional analog of the well-known correspondence between irreducible
representations of $\H(n)$ and unipotent irreducible representations of $\GL{n}$, see e.g.\
\cite{CF}.

\bigskip

The extreme unipotent traces have a number of intriguing properties which we discuss in Section
\ref{Section_unipotent_values}.

Recall the following \emph{multiplicativity property} for the characters of $S(\infty)$, see
\cite{Thoma1}, \cite{VK_S}, \cite{KOO}. Let $\chi$ be a normalized character (i.e.\ central
positive-definite function satisfying $\chi(e)=1$) of $S(\infty)$. Then $\chi$ is extreme (i.e.\
extreme point of the convex set of all normalized characters) if and only if the following
multiplicativity property is satisfied. For $g\in S(n)\subset S(\infty)$ and $h\in S(m)\subset
S(\infty)$ let $g\odot h$ denote the element of $S(n+m)\subset S(\infty)$ obtained by adjoining the
permutations $g$ and $h$, i.e.\ by making $g$ act on $1,\dots,n$ and $h$ act on $n+1,\dots,n+m$.
Then for all $n,m,g,h$ we have
\begin{equation}
\label{Thoma_multiplicativity}
 \chi(g\odot h)=\chi(g)\chi(h).
\end{equation}
We prove (see Theorem \ref{Theorem_multiplicativity_of_characters}) that extreme unipotent traces
of $\A(\GLB)$ satisfy an analogue of \eqref{Thoma_multiplicativity}. More precisely, let
$e_g$ denote the element of $\A(\GLB)_n\subset\A(\GLB)$ corresponding to $g$ under the
identification of $\A(\GLB)_n$ and group algebra of $\GL{n}$. Further, for $g\in\GL{n}$,
$h\in\GL{m}$ let $g\odot h$ denote the block-diagonal matrix in $\GL{n+m}$ with blocks $n$ and
$m$. Then every \emph{normalized} extreme unipotent trace $\chi$ of $\A(\GLB)$ satisfies
$$
 \chi(e_{g\odot h})=\chi(e_g)\chi(e_h).
$$
for any $g$, $h$ with coprime characteristic polynomials.
We further compute the values of extreme unipotent traces on arbitrary elements of $\A(\GLB)$ in
terms of explicit \emph{specializations} (i.e.\ homomorphisms into $\mathbb C$) of the algebra of
symmetric functions $\Lambda$, see Theorem \ref{theorem_character_at_simple_class}.

\bigskip

We continue the discussion of unipotent traces by showing their relation to certain
\emph{probability measures.} We prove (see Theorem \ref{theorem_character_GLB_is_measure}) that
each extreme unipotent trace of $\A(\GLB)$ gives rise to a probability measure on the subgroup
$\B\subset\GLB$ of upper-triangular matrices and, moreover, if two traces are not proportional,
then corresponding measures are distinct. Also each such measure on $\B$ can be naturally extended
to a signed (i.e.\ not necessary positive) $\sigma$-finite measure on $\GLB$ invariant under
conjugations (by elements of $\GLB$). This gives an interpretation of traces of $\A(\GLB)$ as
characters of the group $\GLB$ which are infinite (i.e.\ not well-defined) on the group itself, but
have a \emph{ singularity of type measure.}

\medskip

We further study the properties of measures on $\B$ corresponding to the unipotent traces. We show
(see Theorem \ref{theorem_character_GLB_is_measure}) that each such measure is ergodic (with
respect to conjugations). Motivated by this connection we turn our attention to study of ergodic
probability measures on $\B$.

At this point in order to simplify the exposition it is convenient to switch from $\GLB$ to
another distinguished infinite-dimensional matrix group $\GLU$. $\GLU$ is the group group of all
almost uni-uppertriangular infinite matrices, i.e.\
$$
 \GLU=\{[X_{ij}]\in \GLB : X_{ii}=1\text{ for large enough }i\}.
$$
We remark that matrices in $\GLU$ have well-defined determinants. Thus, we can also define $\SLU\subset\GLU$ as
a subgroup of the matrices with determinant $1$; $\SLU$ can be viewed as $n=\infty$ analogue of groups $\SL{n}$.

The whole theory for $\GLU$ is very much parallel to that of $\GLB$, and we summarize it in the
Appendix. For now we only need the fact that $\GLU$ also has a distinguished class of extreme
unipotent traces, enumerated by the very same sequence $\alpha$, $\beta$, moreover, under the
identification $\A(\GLB)_n\simeq \mathbb C(\GL{n}) \simeq \A(\GLU)_n$ unipotent traces of $\GLB$
and $\GLU$ are \emph{the same} functions.

When we switch from $\GLB$ to $\GLU$ Borel subgroup $\B$ gets replaced by $\U\subset\GLU$ which is
the group of unipotent upper-triangular matrices. Note that, generally speaking, conjugations by
elements of $\GLU$ do not preserve $\U$. However, we can still define a conjugation-invariant
measure $\mu$ through the property $\mu(X)=\mu(Y)$ for every measurable $X\subset U$ and $Y\subset
U$ such that $X=gYg^{-1}$ for some $g\in\GLU$. We state and
 prove a partial result towards the \emph{classification theorem} for ergodic conjugation-invariant measures on $\U$,
 see Conjecture \ref{Conj_Kerov} and Proposition \ref{prop_list_of_ergodic}. This theorem is a particular case of
a general statement describing specializations of the algebra of symmetric functions $\Lambda$
non-negative on Macdonald polynomials (our case corresponds to Hall-Littlewood polynomials), which
is known as Kerov's conjecture, see \cite[Section II.9]{Kerov_book}. We also state a
\emph{conjectural Law of Large Numbers} for ergodic measures, see Conjecture \ref{Conj_Kerov}. One
particular case of this conjecture was proved by Borodin \cite{Borodin_GL_short}, \cite{Borodin_GL}
who studied uniform measure on $\U$. Finally, in Theorem \ref{Theorem_prob_measure_for_character}
 we explain which measures in the above conjectural classification
correspond to extreme unipotent traces.

\subsection{Construction of representations of $\GLB$}

Our next topic is the construction of representations of $\GLB$ corresponding to the extreme
unipotent traces of $\A(\GLB)$. Of course, there exists an abstract general
(Gelfand-Naimark-Segal) construction for the representation with given trace, but since by its
definition $\GLB$ is a transformation group we seek for  more explicit constructions based on its
\emph{natural} action in the infinite-dimensional vector space over $\F$.

In Section \ref{Section_reps} we adopt the representation formalism of \cite{Olsh_Howe} in a
modified form and construct unitary representations $T$ of $(\GLB\times\GLB)$ (in other words, we
consider two-sided representations) in a Hilbert space $H$ possessing a distinguished vector $v$
(in general, $v$ might belong to a certain extension of $H$), which is cyclic and invariant under
the action of $\GLB$ diagonally embedded into $(\GLB\times\GLB)$. $v$ defines a spherical function
$\chi(a)=((a,e)v,v)$, and viewed as a function on $\A(\GLB)$ (or $\GLB$) this function becomes our
trace. There is a simple link between our constructions and factor representations (semifinite, in
general). If we consider the restriction of $T$ on the first component of $\GLB\times\GLB$, then we
get a \emph{von Neumann factor representation.}

In the most well-studied settings of the asymptotic representation theory, e.g.\ for infinite
symmetric group $S(\infty)$ (see \cite{Thoma1}, \cite{VK_S}, \cite{Olsh_S}, \cite{Ok_S}) and real
infinite-dimensional matrix groups such as $U(\infty)$ (see \cite{Vo}, \cite{Olsh_Howe_short},
\cite{Olsh_Howe}) distinguished vector $v$ belongs to $H$ and corresponding factor representation
is of type $II_1$. However, in our setting, since our traces are defined only on $\A(\GLB)$ instead
of the whole group $\GLB$, we have to use generalized vectors (distributions) $v$ and corresponding
factor representations we get are of type $II_\infty$.

Similar situations appeared before in the investigation of at least two topics of classical
representation theory of finite-dimensional groups. In the study of unitary representations of
general Lie groups sometimes one is led to consider type $II_\infty$ factor representation, see
e.g.\ \cite{Pu1}, \cite{Pu2} where characters and representations of simply connected solvable and
more general Lie groups are studied. From the other side, in the theory of semisimple Lie groups
generalized distinguished vectors show up\footnote{Note, however, that all the representations are
of type $I$ in this theory.} in harmonic analysis, i.e.\ when one tries to decompose
highly-reducible representations. As a quick example, by the well-known theorem bi-regular
representation of a finite group $G$ equipped with a distinguished vector $v$ ---
$\delta$-function at identity element of $G$ --- is the direct sum of irreducible spherical
representations $\pi^\lambda\otimes (\pi^\lambda)^*$ of Gelfand pair $(G\times G,G)$ (here
$\lambda$ goes over all irreducible representations of $G$). By Peter-Weyl theorem, the same
decomposition is valid for a compact Lie group, however, $\delta$--function at identity is no
longer a vector of $L_2$ on the group, rather it is a generalized vector (distribution).

Coming back to the construction of representation of $\GLB$ recall that extreme unipotent traces
of $\A(\GLB)$ are parameterized by two sequences of non-negative reals $\{\alpha_i\}$ and
$\{\beta_i\}$. We start from two distinguished simplest cases, where the corresponding factor
representations are of type $I$. If $\alpha_1=1$ with other parameters being zeros, then the
corresponding representations is trivial $1$--dimensional representation of $\GLB$. On the
contrary,  if $\beta_1=1$ with other parameters being zeros, then the corresponding von Neumann
factor representation is of type $I_\infty$ and the construction is related to classical
\emph{Steinberg representation} of $\GL{n}$, see \cite{Steinberg}, \cite{Hum}.

Further we concentrate on the case $\beta_i=0$ for all $i$ and $\sum_i \alpha_i=1$, where we get
type $II_\infty$ factor representations. To simplify the exposition let us stick to the case when
only $\alpha_1$ and $\alpha_2$ are non-zero. Our construction starts with natural action of $\GLB$
on the \emph{grassmanian} $Gr(V)$, which is the set of all subspaces of countable
infinite-dimensional vector space $V$ over $\F$. (If more than two $\alpha_i$s are non-zero, then
$Gr(V)$ gets replaced by an appropriate space of flags.) A well-known group-measure space (or
crossed product) construction going back to the papers of F.~J.~Murray and J.~von~Neumann
\cite{MN}, \cite{N} associates a von Neumann factor to a free ergodic action of a group on a space
equipped with measure. Of course, the action of $\GLB$ on $Gr(V)$ is not free and, thus,
modifications are necessary. The known solution here is to use \emph{principal grouppoid} of the
equivalence relation spanned by the group action, in other words, we take the set $Gr^2(V)\subset
Gr(V)\times Gr(V)$ which is the graph of the equivalence relation. Grouppoids and construction of
the associated von Neumann algebras attracted a lot of attention in the literature, see
\cite{Moore} for a review, a somewhat simpler case when all the classes of equivalence relation are
finite was first studied in \cite{Kr} and \cite{FM}. Grouppoids in the context of asymptotic
representation theory of symmetric groups first appeared in \cite{VK_S}, where the construction was
based on the action of $S(\infty)$ on the set of words equipped with product measure.

When classes (orbits) are uncountable (which is the case for the action of $\GLB$ in $Gr(V)$) the
construction is more delicate, and even the  definition of the correct measure on grouppoid
becomes complicated. General solutions do exist here, see \cite{Hahn}, \cite{Renault}, however, in
our case the situation is simplified by the fact that $Gr(V)$ can be decomposed into
\emph{Schubert cells}, each of which is a $\B$--orbit. This lets us to start from a measure on
symbols of Schubert cells (here we use the Bernoulli measure, similarly to the constructions of
\cite{VK_S} for $S(\infty)$) and produce the measures on $Gr(V)$ and grouppoid $Gr^2(V)$ using it.
We end up with quasiinvariant measures, which allow us to construct usual unitary representation
of $\GLB\times\GLB$ in the space of square-integrable functions on $Gr^2(V)$. As for the
distinguished vector $v$, in the case of countable equivalence classes (as happens for
$S(\infty)$) the right choice is known to be the indicator function of the diagonal of grouppoid,
see \cite{VK_S}, \cite{FM}. In our case the diagonal has measure zero, so this choice is
unappropriate. Because of that we have to use a generalized vector (distribution) $v$ which is the
integral along the diagonal (defined only for the continuous functions.)

One interesting aspect here is that the measure on $Gr(V)$ we use is not $\GLB$--invariant and,
moreover, one proves that there is no equivalent $\sigma$--finite $\GLB$--invariant measure. In
classics this would imply that the resulting crossed product gives a factor of type $III$, see
e.g.\ \cite[Theorem 2.4]{Kr}, \cite[Theorem I.3.12]{SV}, while we end up with type $II_\infty$
factor representations of $\GLB$. An explanation here is the following: our trace on the operators
of representation of $\GLB$ can not be extended to the operators of the multiplication by the
function, as opposed to the situations related to the variations of the crossed product
construction.

Somewhat related question concerns cyclicity of the distinguished vector $v$. In general, there is
no guarantee that $\GLB\times\GLB$ orbit of our vector is dense, thus, we have to consider the
representation in the cyclic hull of $v$. In \cite{VK_S}, \cite[Section 5]{Olsh_S},
\cite{V_nonfree} the cyclicity question for the representations of infinite symmetric group
$S(\infty)$ was discussed; for $\GLB$ this topic requires further investigations.

\bigskip

We are currently unable to construct representations of $\GLB$ corresponding to other unipotent
traces of $\A(\GLB)$. Even the case $\alpha_i=\beta_i=0$ for all $i$, which for the infinite
symmetric group $S(\infty)$ corresponds to the biregular representation, is out of our reach at
the moment.

In papers \cite{V_nonfree}, \cite{V_nonfree2} a new approach to the construction of spherical
representations (or finite factor representations on the different language) related to infinite
symmetric group $S(\infty)$ was proposed. The authors hope that this approach might be extended to
$\GLB$.
\subsection{Biregular representation of $\GLB$.}
The final object of our interest is the (bi-)regular representation of $\GLB$. Since $\GLB$ admits
a unique Haar measure $\mu_{\GLB}$ (normalized by the condition $\mu_{\GLB}(\B)=1$) there is a
well-defined two-sided representation of $\GLB$ in $L_2(\GLB, \mu_{\GLB})$. This representation
equipped with the distinguished distribution--- $\delta$--function at unit element of the group ---
fits into the formalism of generalized spherical representations and corresponds to a certain
trace of $\A(\GLB)$. As in the classical harmonic analysis on finite or compact groups we are
interested into the decomposition of this representation. In Section \ref{Section_harmonic} we
describe the decomposition of the trace of bi-regular representation of $\GLB$ into a combination of
extreme traces of $\A(\GLB)$.

\label{section_intro_before_comments}

\subsection{Motivations and comments}
\label{Section_comments}

A well-known point of view is that the symmetric group $S(n)$ can be viewed as $\GL{n}$ over the
field with one element, i.e.\ with $q=1$. This agrees with similarities between the representation
theory of $\GL{n}$ and $S(n)$, so it is natural to expect some similarities for $n=\infty$ as
well.

The infinite symmetric group $S(\infty)$ is usually defined as the inductive limit of finite
symmetric groups, equivalently, $S(\infty)$ consists of all bijections of countable set, which
permute only finitely many elements. A natural adaptation of this definition to $\GL{n}$ is the
following. Realize $\GL{n}$ as a subgroup of $\GL{n+1}$ acting in the space spanned by the first
$n$ coordinate vectors and fixing $n+1$st coordinate vector and consider the inductive limit of
$\GL{n}$ with respect to such embeddings. In this way we get the infinite-dimensional group
$\GL{\infty}$. However, the representation theory of $\GL{\infty}$ turns out to be not as rich as
one could hope for. For instance, the set of extreme (indecomposable) characters of $\GL{\infty}$
is countable (see \cite{ThomaGL}, \cite{Skudlarek}) as opposed to the infinite symmetric group
$S(\infty)$ (see \cite{Thoma1}, \cite{VK_S}, \cite{KOO}, \cite{Ok_S}) or infinite-dimensional
unitary group $U(\infty)$ (see \cite{Vo}, \cite{VK_U}, \cite{Bo}, \cite{OkOlsh}, \cite{BO},
\cite{P}, \cite{GP}) for which such sets comprise infinite-dimensional domains in $\mathbb
R^{\infty}$. This leads one to seek for other $n=\infty$ analogue of $\GL{n}$.

The key idea here is to change the embeddings. A new definition is hinted by the
notions of \emph{parabolic induction and restriction} well-known in the representation theory of
$\GL{n}$, see \cite{Green}, \cite{Zel}, \cite{F}. This leads to \emph{parabolic embeddings}
$i_n:\mathbb C(\GL{n}) \hookrightarrow \mathbb C(\GL{n+1})$ which are no longer induced by the
group embeddings, see Section \ref{section_A_as_inductive_limit} for the formal definition. The
inductive limit of $\mathbb C(\GL{n})$ with respect to the embeddings $i_n$ is our main hero ---
algebra $\A(\GLB)$. A thorough analysis of the definitions leads to the realization of $\A(\GLB)$
as subalgebra of the algebra of the functions on a group, that's how the group $\GLB$ first
appears. Note that $\GL{\infty}$ is a dense subgroup of $\GLB$, so another point of view might be
to consider $\GLB$ as a certain \emph{completion} of discrete group $\GL{\infty}$.

The representation theory of $\GLB$, indeed, turns out to be similar to that of $S(\infty)$.
First, the classification of traces of $\A(\GLB)$ and characters of $S(\infty)$ are similar,
sequences $\{\alpha_i\}$ and $\{\beta_i\}$ appear in both. The similarity is even more striking
when one considers unipotent extreme traces of $\A(\GLB)$. Their normalized versions are in
one-to-one correspondence with extreme characters of $S(\infty)$ and both families have similar
properties, e.g.\ multiplicativity and the same coefficients of decomposition into irreducibles of
restrictions to $\A(\GLB)_n$ ($S(n)$).

The constructions of the representations corresponding to characters are also similar, although
some distinctions do exist (e.g.\ the distinguished vector becomes a distribution for $\GLB$).
More precisely, the realization of representations of $\GLB$ with unipotent traces with non-zero
parameters $\alpha_i$ is related to the spaces of flags of subspaces, while corresponding
representations of $S(\infty)$ are related to its exact $q=1$ analogue which is the space of flags
of subsets, see \cite{VK_S}, \cite{VT}.

The above facts let us claim that the group $\GLB$ might be the right $q$-analogue of $S(\infty)$
and $n=\infty$ analogue of $\GL{n}$ in the context of the asymptotic representation theory.

However, some of the similarities break down when we start considering representations with
non-zero $\beta_i$. Representation of $S(\infty)$ with single non-zero parameter $\beta_1=1$ is
the simple one-dimensional alternating representation, while the corresponding representation of
$\GLB$  is an infinite-dimensional one; this is parallel to the difference between alternating
one-dimensional representation of $S(n)$ and corresponding unipotent (principal) representation of
$\GL{n}$ which is the Steinberg representation of dimension $q^{n(n-1)/2}$.

More importantly, while the (bi-)regular representation of $S(\infty)$ is irreducible and
corresponds to zero parameters $\alpha_i$ and $\beta_i$, the (bi-)regular representation of $\GLB$
is reducible (as we explain in Section \ref{Section_harmonic}). The construction of the unipotent
representation of $\GLB$ corresponding to zero parameters at the moment remains unknown.

We intensively exploit the similarity between $S(\infty)$ and $\GLB$ in our methods. For instance,
some theorems of the present article are based on the Ring Theorem, which originally was
discovered in the study of $S(\infty)$, see \cite{VK_ring}, \cite{Kerov_book} and also
\cite[Section 8.7]{GO_Ring}). Also Schur polynomials play an important role in the study of
$S(\infty)$, while in the present paper we intensively use both Schur polynomials and their
$q$--deformation --- Hall-Littlewood polynomials.

In the classics, the representation theory of $S(\infty)$ has numerous connections with the
representation theory of $U(\infty)$, see \cite{BO2} and references therein. A $q$--deformation of
the character theory of $U(\infty)$ related to the quantum groups was proposed in \cite{G}. It is
yet to discover whether the representation theory of $\GLB$ is somehow related to that
$q$--deformation.

Finally, we remark that some results on the structure of $\GLB$ from the algebraic point of view
can be also found in the literature, see \cite{Ho}, \cite{GuHo} and references therein.

\subsection{List of main notations and theorems}
\begin{flushleft}
{\bf Notations:}

 $S(n)$ --- symmetric group of rank $n$

 $\H(n)$ --- Iwahori--Hecke algebra of rank $n$

 $\F$ --- finite field with $q$ elements

 $\GL{n}$ --- group of all invertible $n\times n$ matrices over $\F$

 $S(\infty)$, $\H(\infty)$, $\GL{\infty}$ --- inductive limits of corresponding finite $n$ objects

 $\GLB$ --- group of all almost uppertriangular infinite matrices over $\F$

 $\B$ --- group of all uppertriangular infinite matrices over $\F$

 $\BI_n$ --- subgroup of $\B$ of all matrices such that their top left $n\times n$ corner is the
 identity matrix

 $\B_n$ --- group of all uppertriangular $n\times n$ matrices over $\F$

 $\GLU$ --- group of all almost uni-uppertriangular infinite matrices over $\F$

 $\U$ --- group of all uni-uppertriangular infinite matrices over $\F$

 $\mu_\GLB$ --- Haar measure on $\GLB$ normalized by $\mu_\GLB(\B)=1$

 $\mu_\GLU$ --- Haar measure on $\GLU$ normalized by $\mu_\GLU(\U)=1$

 $\A(\GLB)$, $\A(\GLU)$ --- algebra of all continuous functions  with compact support (on the corresponding group) taking only
finitely many values

\bigskip

 $\Y$ --- set of all Young diagrams and also Young graph

 $n(\lambda)$ --- function of Young diagram $\lambda=\lambda_1\ge\lambda_2\ge\dots$ given by $\sum_i (i-1)\lambda_i$

 $\C$ --- set of all irreducible monic polynomials over $\F$ other than $x$ and $1$

 $\C_n$ --- all degree $n$ polynomials in $\C$

 $\CY_n$ --- set of all maps from from $\C$ to $\Y$ of degree $n$

 $\CY$ --- disjoint union of sets $\CY_n$, $n=1,2,\dots$

 $\pi^f$, $\chi^f$ --- irreducible complex representation of $\GL{n}$ parameterized by $f\in\CY_n$ and
 its conventional character

 $\CY'$ --- subset of $\CY$ of maps $f$ such that $f(``x-1'')=\emptyset$.

\bigskip

 $\Lambda$ --- algebra of symmetric (polynomial) functions in countably many variables

 $h_n$, $e_n$, $p_n$ --- complete homogeneous functions, elementary symmetric functions and Newton power
 sums, respectively

 $s_\lambda$ --- Schur function indexed by $\lambda\in\Y$

 $P_\lambda(\cdot; t)$, $Q_\lambda(\cdot; t)$ --- Hall-Littlewood $P$ and $Q$ functions with
 parameter $t$, indexed by $\lambda\in\Y$

 $\Sp_{\alpha,\beta,\gamma}$ --- homomorphism from $\Lambda$ into $\mathbb C$ indexed by two
 sequence of non-negative numbers $\alpha=\{\alpha_i\}$, $\beta=\{\beta_i\}$ and real number $\gamma$ such that
 $\sum_i(\alpha_i+\beta_i)\le \gamma$, and given by its values on power sums
 $$
  \Sp_{\alpha,\beta,\gamma}[p_1]=\gamma,\quad  \Sp_{\alpha,\beta,\gamma}[p_k]=\sum_i \alpha_i^k +
  (-1)^{k-1}\sum_i \beta_i^k,\, k>1
 $$

\bigskip
{\bf Key theorems: }
\end{flushleft}

\emph{Proposition \ref{proposition_A_as_inductive_limit}} on page
\pageref{proposition_A_as_inductive_limit} identifies $\A(\GLB)$ with the inductive limit of the
group algebras $\mathbb C(\GL{n})$.

\emph{Theorem \ref{theorem_characters_of_GLB}} on page \pageref{theorem_characters_of_GLB}
provides the description of all extreme traces of $\A(\GLB)$.

\emph{Theorem \ref{Theorem_multiplicativity_of_characters}} on page
\pageref{Theorem_multiplicativity_of_characters} gives the proof of multiplicativity of extreme
unipotent traces of $\A(\GLB)$.

\emph{Theorems \ref{theorem_character_at_unipotent_class} and
\ref{theorem_character_at_simple_class}} on page \pageref{theorem_character_at_unipotent_class}
 relate the values of extreme unipotent characters
to specializations of Hall--Littlewood polynomials.

\emph{Theorem \ref{Theorem_restriction_to_Hecke}} on page \pageref{Theorem_restriction_to_Hecke}
identifies the restrictions of unipotent traces with extreme traces of Iwahori--Hecke algebra.

\emph{Theorems \ref{theorem_character_GLB_is_measure} and
\ref{Theorem_prob_measure_for_character}} on pages \pageref{theorem_character_GLB_is_measure} and
\pageref{Theorem_prob_measure_for_character} explain that each unipotent characters can be viewed
as a probability measure.

\emph{Conjecture \ref{Conj_Kerov}} on page \pageref{Conj_Kerov} gives the (conjectural)
classification and law of large numbers for conjugation--invariant probability measures on
infinite upper-triangular matrices.

\emph{Theorem \ref{Theorem_construction_gras}} on page \pageref{Theorem_construction_gras}
provides a construction for the representations of $\GLB$ related to grassmanian.

\emph{Theorem \ref{theorem_construction_flags}} on page \pageref{theorem_construction_flags}
provides a construction for the representations of $\GLB$ related to spaces of flags.

\emph{Theorem \ref{Theorem_biregular}} on page \pageref{Theorem_biregular} describes the
decomposition of the biregular representation of $\GLB$.

\label{Section_intro_notations}

\subsection{Acknowledgements}
 During the long period of  first stage of the
work on the subject the last two authors (S.K.\, and A.V.) had many useful discussions with
A.~Zelevinsky, G.~Olshanski, J.~Bernstein. The first author (V.G) is grateful to G.~Olshanski for
drawing his attention to $\GLB$ and fruitful discussions at various stages of this work. V.G.\, was
partially supported by RFBR-CNRS grants 10-01-93114 and 11-01-93105. A.V.\ was partially supported
by RFBR grants  11-01-12092 (OFI-m), 11-01-00677 and 13-01-12422 (OFI-m).

\section{The group $\GLB$ and its Schwartz--Bruhat algebra $\A(\GLB)$}
\label{Section_AG_as_semisimple}

\subsection{Basic definitions}

Let $\F$ be the finite field with $q$ elements and let $\GL{n}$ denote the group of all invertible
$n\times n$ matrices over $\F$. For any matrix $X$ we denote through $X^{(n)}$ its top left
$n\times n$ corner.

\begin{definition} $\GLB$ is the group of all invertible almost upper-triangular matrices over $\F$
in other words $X=[X_{ij}]_{i,j=1}^\infty$ is an element of $\GLB$ if there exists $n$ such that:
\begin{enumerate}
\item The $n\times n$ submatrix $X^{(n)}$ is invertible,
\item $X_{ij}=0$ for all $i$ such that $i>j$ and $i>n$,
\item $X_{ii}\ne 0$ for $i>n$.
\end{enumerate}
\end{definition}

The group $\GLB$ is an inductive limit of groups $\GLB_n$, where
$$
 \GLB_n=\{ [X_{ij}]\in\GLB\mid X_{ij}=0\text{ if both } i>j\text{ and }i>n\},
$$
in particular, $\GLB_0=\B\subset \GLB$ is the subgroup of all upper-triangular invertible
matrices.

Each $\GLB_n$ is a compact group (with topology of pointwise convergence of matrix elements).
$\GLB$ as an inductive limit of $\GLB_n$ is a locally compact topological group. Let $\mu_{\GLB}$
denote the biinvariant Haar measure on $\GLB$ normalized by the condition $\mu_{\GLB}(\B)=1$.

The space $L_1(\GLB,\mu_{\GLB})$ is a Banach involutive algebra with multiplication given by the
convolution.

\begin{definition}  $\A(\GLB)$  is defined as the subalgebra of $L_1(\GLB,\mu_{\GLB})$ formed by all locally
constant functions with compact support. In other words, a function $f(X)$ belongs to $\A(\GLB)$
if their exists $n$ and a function $f_n: \GL{n}\to\mathbb C $ such that:
$$
f(X)=\begin{cases} f_n(X^{(n)}),\text{ if } X\in\GLB_n, \\0,\text{ otherwise.}
\end{cases}
$$
\end{definition}

Clearly, $\A(\GLB)$ is dense in $L_1(\GLB,\mu_{\GLB})$. Note that algebra $\A(\GLB)$ does not have
a unit element.

\begin{definition} A (linear) function $\chi: \A(\GLB) \to \mathbb C$ is a \emph{trace} of $\A(\GLB)$ if
\begin{enumerate}
 \item $\chi$ is central, i.e.\ $\chi(WU)=\chi(UW)$,
 \item $\chi$ is positive definite, i.e.\ $\chi(W^*W)\ge 0$ for any $W\in \A(\GLB)$,
\end{enumerate}
\end{definition}
\noindent {\bf Remark. } It is impossible to normalize the  traces, i.e.\ for any $a\in\A(\GLB)$
there exists a  trace $\chi$ such that $\chi(a)=0$.

\smallskip

A  trace $\chi$ is \emph{indecomposable} if $\chi=\alpha_1\chi_1+\alpha_2\chi_2$ with
$\alpha_1>0$, $\alpha_2>0$ implies that both $\chi_1$ and $\chi_2$ are multiples of $\chi$. In
other words, indecomposable  traces are elements of extreme rays of the convex cone of all
 traces.

\subsection{$A(\GLB)$ as an inductive limit}

\label{section_A_as_inductive_limit}

For any matrix $g\in\GL{n}$ let $I^\GLB_g\in\A(\GLB)$  denote the indicator function
$$
 I^\GLB_g(X)=
 \begin{cases}
  1, \text{ if } X\in\GLB_n \text{ and }X^{(n)}=g, \\
  0, \text{ otherwise.}
 \end{cases}
$$
Let $e(n)$ denote the identity element of $\GL{n}$. Then by the definition
$$I^{\GLB}_g(X)=I^{\GLB}_{e(n)}(Xg^{-1})=g\cdot I^{\GLB}_{e(n)}.$$

\begin{definition} $\A(\GLB)_n$ is defined as the linear span of $I^\GLB_g$, $g\in\GL{n}$. Put it otherwise,
$\A(\GLB)_n$ consists of functions from $\A(\GLB)$ with support in $\GLB_{n}$ and depending only
on the restriction of operator $g\in\GLB_n$ on $V_n\subset V_\infty$.
\end{definition}

Let $\mathbb
 C(\GL{n})$ denote the conventional group algebra of $\GL{n}$, i.e.\ the algebra with linear basis
 $\{e_g\}_{g\in\GL{n}}$ and multiplication given by $e_g e_h=e_{gh}$.
   The following proposition is straightforward
\begin{proposition}
\label{prop_A_n_as_group_algebra}
 $\A(\GLB)_n$ is a subalgebra of $\A(\GLB)$ isomorphic to the group algebra $\mathbb
 C(\GL{n})$.  The isomorphism is given by $e_g \to (q-1)^n q^{n(n-1)/2} I^\GLB_g$
\end{proposition}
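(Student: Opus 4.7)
The plan is to compute the convolution $I^{\GLB}_g * I^{\GLB}_h$ directly from the Haar-integral definition, show it lies in $\A(\GLB)_n$ and is a scalar multiple of $I^{\GLB}_{gh}$, and then pin down the normalizing scalar so that the map $e_g \mapsto c \cdot I^{\GLB}_g$ is multiplicative. The essential structural fact driving the computation is that $\GLB_n$ is a (compact) subgroup of $\GLB$, and that the assignment $Y \mapsto Y^{(n)}$ is a surjective group homomorphism $\pi_n : \GLB_n \twoheadrightarrow \GL{n}$ whose kernel $K_n := \pi_n^{-1}(e(n))$ is contained in $\B$.

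First I would check that the product lies in $\A(\GLB)_n$. If $Y$ and $Y^{-1}X$ both lie in $\GLB_n$ then $X = Y\cdot(Y^{-1}X)\in\GLB_n$, so $\mathrm{supp}(I^{\GLB}_g * I^{\GLB}_h) \subseteq \GLB_n$. For $X\in\GLB_n$ with $X^{(n)}=gh$, the block structure of elements of $\GLB_n$ gives $Y^{(n)}\cdot(Y^{-1}X)^{(n)} = X^{(n)}$, so the condition $(Y^{-1}X)^{(n)}=h$ in the definition of $I^{\GLB}_h(Y^{-1}X)$ is automatic once $Y^{(n)}=g$ and $X^{(n)}=gh$. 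Hence
\[
(I^{\GLB}_g * I^{\GLB}_h)(X) \;=\; \mu_\GLB\bigl(\{Y\in\GLB_n : Y^{(n)}=g\}\bigr)\cdot I^{\GLB}_{gh}(X),
\]
and by left-invariance of $\mu_\GLB$ the quantity on the right does not depend on $g$; call it $m_n$. Thus $I^{\GLB}_g * I^{\GLB}_h = m_n\, I^{\GLB}_{gh}$, which already shows $\A(\GLB)_n$ is a subalgebra and that a suitable rescaling gives the group-algebra relations.

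Next I would evaluate $m_n$. Since $m_n = \mu_\GLB(\GLB_n)/|\GL{n}|$ (the fibers of $\pi_n$ all have the same Haar measure and there are $|\GL{n}|$ of them), it suffices to compute $\mu_\GLB(\GLB_n)$. Using that $\B\subseteq \GLB_n$ is a subgroup of finite index, left invariance gives $\mu_\GLB(\GLB_n) = [\GLB_n:\B]\cdot\mu_\GLB(\B) = [\GLB_n:\B]$. The inclusion chain $K_n\subseteq \B\subseteq \GLB_n$ combined with $[\GLB_n : K_n] = |\GL{n}|$ and $\pi_n(\B) = \B_n \subset \GL{n}$ (so $[\B:K_n] = |\B_n|$) yields $[\GLB_n:\B] = |\GL{n}|/|\B_n|$, hence
\[
m_n \;=\; \frac{|\GL{n}|}{|\GL{n}|\cdot|\B_n|} \;=\; \frac{1}{|\B_n|} \;=\; \frac{1}{(q-1)^n q^{n(n-1)/2}}.
\]
The rescaling $e_g \mapsto c\, I^{\GLB}_g$ converts $*$-multiplication into group multiplication precisely when $c^2 m_n = c$, i.e.\ $c = 1/m_n = (q-1)^n q^{n(n-1)/2}$, matching the statement.

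Finally, the map $e_g\mapsto c\,I^{\GLB}_g$ is by construction linear and bijective onto $\A(\GLB)_n$, and the previous display shows it is an algebra homomorphism; injectivity is immediate from linear independence of the indicator functions $I^{\GLB}_g$. The only step that required genuine care is the Haar-measure bookkeeping for $\mu_\GLB(\GLB_n)$; once one recognizes that $K_n\subseteq\B$ (so that $\B\backslash\GLB_n$ is in bijection with $\B_n\backslash\GL{n}$), everything else is a direct unwinding of the definitions.
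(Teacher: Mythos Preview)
Your argument is correct and is exactly the kind of direct verification the paper has in mind: the paper simply declares the proposition ``straightforward'' and gives no proof, so your computation of the convolution $I^{\GLB}_g * I^{\GLB}_h = m_n\, I^{\GLB}_{gh}$ via the block-triangular subgroup structure of $\GLB_n$ and the Haar-measure count $m_n = 1/|\B_n|$ is precisely the intended unpacking.
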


Observe that $\A(\GLB)_n\subset \A(\GLB)_{n+1}$. In the basis $I^\GLB_g$ this inclusion is given
by
$$
 i_n: I^\GLB_g \to \sum_{h\in Ext^\GLB(g)} I^\GLB_h,
$$
where for $g\in\GL{n}$ we have
\begin{multline*}
 Ext^\GLB(g)=\biggl\{[h_{ij}]\in\GL{n+1}\mid\\ h^{(n)}=g\text{ and }
 h_{n+1,1}=h_{n+1,2}=\dots={h_{n+1,n}}=0\biggr\}.
\end{multline*}

Summarizing the discussion of this section we get the following statement.

\begin{proposition}
\label{proposition_A_as_inductive_limit} The algebra $\A(\GLB)$ can be identified with the
inductive limit of algebras $\A(\GLB)_n$:
$$
 \A(\GLB)=\ilim_{n\to\infty} \A(\GLB)_n = \bigcup_{n} \A(\GLB)_n.
$$
For every $n$ the algebra $\A(\GLB)_n$ is isomorphic to the group algebra $\mathbb
 C(\GL{n})$
\end{proposition}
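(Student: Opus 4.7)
The plan is to establish two assertions: (a) the subspaces $\A(\GLB)_n$ form a nested increasing sequence of subalgebras of $\A(\GLB)$ compatible with the formula for $i_n$ stated just before the proposition, and (b) every element of $\A(\GLB)$ belongs to $\A(\GLB)_m$ for some $m$. Once (a) and (b) are in hand, the identification with the inductive limit is formal, and the isomorphism $\A(\GLB)_n\simeq\mathbb C(\GL{n})$ is already furnished by Proposition \ref{prop_A_n_as_group_algebra}.

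For (a), I would verify set-theoretically the identity $I^\GLB_g=\sum_{h\in Ext^\GLB(g)} I^\GLB_h$. Both sides are $\{0,1\}$-valued, so it suffices to check that they are indicator functions of the same subset of $\GLB$. The support of the left-hand side is $\{X\in\GLB_n:X^{(n)}=g\}$. On the other hand, $X\in\GLB_n$ is equivalent to $X\in\GLB_{n+1}$ together with $X_{n+1,j}=0$ for $j\le n$; combined with $X^{(n)}=g$, this is exactly the condition $X^{(n+1)}\in Ext^\GLB(g)$ (the requirement $X_{n+1,n+1}\ne 0$ being automatic from the invertibility of $X^{(n+1)}$ forced by $X\in\GLB_{n+1}$). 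Thus the two supports coincide. Since $\A(\GLB)_n$ and $\A(\GLB)_{n+1}$ are already subalgebras of $\A(\GLB)$ by Proposition \ref{prop_A_n_as_group_algebra}, the verified containment automatically upgrades $i_n$ to an algebra embedding.

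For (b), take $f\in\A(\GLB)$. Since $f$ has compact support and $\GLB=\bigcup_N\GLB_N$ with each $\GLB_N$ a clopen compact subset, there is $N$ with $\mathrm{supp}(f)\subset\GLB_N$. I would then identify $\GLB_N$ topologically with $\GL{N}\times \F^{N\times\infty}\times T$, where $T$ is the set of upper-triangular infinite matrices with $\F^*$-valued diagonal; this exhibits $\GLB_N$ as a compact profinite-type space with a basis of clopen cylinder sets of the form ``$X_{ij}$ prescribed for $i,j\le m$''. A locally constant function on such a space has clopen level sets, each of which is a union of cylinders; by compactness of $\GLB_N$ a single $m$ suffices to describe all level sets, so $f(X)$ depends only on $X^{(m)}$. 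Enlarging $m$ so that $m\ge N$ yields $f\in\A(\GLB)_m$. The main obstacle I foresee is precisely this topological compactness step: one must pin down the correct basis of clopen cylinders on $\GLB_N$ and convert pointwise local constancy into dependence on a single fixed truncation $X^{(m)}$. This is standard once the topology is made explicit, but it is the only non-combinatorial ingredient; the rest of the argument reduces to manipulations with indicator functions.
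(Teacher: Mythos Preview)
Your proposal is correct and follows the same outline as the paper, which simply presents the proposition as a summary of the preceding definitions and observations without a separate proof. You are in fact more careful than the paper: your part (b) supplies the equivalence between ``locally constant with compact support'' and ``depends only on $X^{(m)}$ and vanishes off $\GLB_m$'', which the paper asserts without argument in the very definition of $\A(\GLB)$ (the ``In other words'' clause), and your part (a) verifies the inclusion formula that the paper merely states.
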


Thus, $\A(\GLB)$ is a \emph{locally semisimple} algebra.

\subsection{Facts from representation theory of $\GL{n}$}
Let us fix the notations and recall some basic facts from the representation theory of the group
$\GL{n}$ which immediately translate into the statements for the representations and  traces of
algebra $\A(\GLB)_n$. To a large extent we adopt the notations of the book \cite{M}.

A Young diagram $\lambda$ is a finite collection of boxes arranged in rows with nonincreasing row
lengths $\lambda_i$. The total number of boxes in $\lambda$ is denoted by $|\lambda|$. Let $\Y$
denote the set of all Young diagrams. We agree that the empty set $\emptyset\in \Y$ and
$|\emptyset|=0$. $\Y_n\subset\Y$ stays for the set of all Young diagrams with $n$ boxes. Also for
the Young diagram $\lambda$ its transpose diagram is denoted $\lambda'$; the row lengths of
$\lambda$ coincide with column lengths of $\lambda'$. For a box $\square\in\lambda$ its hook length
$h(\square)$ is one plus number of the boxes below $\square$ (in the same column) plus number of
the boxes to the right from $\square$ (in the same row). Finally, we set $n(\lambda)=\sum_i
(i-1)\lambda_i =\sum_{i} {\lambda'_i\choose 2}.$

For $d>1$ let $\C_d$ denote the set of all monic irreducible polynomials of degree $d$ over $\F$.
Let $\C_1$ be the set of all linear polynomials $x-a$, $a\in\F^*$, i.e.\ we exclude the polynomial
$x$. Clearly, $|\C_1|=q-1$. Let $\C=\bigcup_{d=1}^{\infty} \C_d$.
\begin{definition}
 A \emph{family of Young
diagrams} over the set $\C$ is a map
$$
\phi: \C\to\Y,
$$
such that
$$
 |\phi|:=\sum_d\sum_{c\in\C_d} d|\phi(c)|<\infty.
$$
We call $|\phi|$ the \emph{degree} of $\phi$.
\end{definition}

Let $\CY_k$ denote the set of all families of degree $k$ and define $\CY=\bigcup_{k=0}^\infty
\CY_k$.

\begin{theorem}[Green]
\label{theorem_irreps_of_gl}
 Irreducible representations of $\GL{k}$ are parameterized by elements of $\CY_k$.
 The dimension of the irreducible representation parameterized by $\phi\in\CY_k$ is given by the
 $q$--analogue of the hook formula
 $$
   {\rm dim}_q(\phi)= (q^k-1)\dots(q-1)\prod_{d\ge 1} \prod_{c\in\C_d} \frac{q^{d n(\phi(c))}} {\prod_{\square\in \phi(c)} (q^{d
   h(b)}-1)}.
 $$
\end{theorem}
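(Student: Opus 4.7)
The plan is to follow Green's classical approach, organizing it through Harish-Chandra's theory of cuspidal support. Call an irreducible complex representation of $\GL{n}$ \emph{cuspidal} if it does not occur as a constituent of any representation parabolically induced from a proper Levi subgroup $\GL{n_1} \times \cdots \times \GL{n_r}$ with $r \ge 2$. By Harish-Chandra's philosophy, every irreducible of $\GL{n}$ arises as a constituent of a representation parabolically induced from a cuspidal of some Levi, with the pair (Levi, cuspidal) uniquely determined up to conjugation. This splits the theorem into two tasks: (a) classifying cuspidal representations of $\GL{d}$ for each $d$, and (b) analyzing the decomposition of parabolic inductions from products of cuspidals.

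For (a) one establishes a bijection between cuspidals of $\GL{d}$ and $\C_d$. The bijection factors through the nonsplit anisotropic torus $T_d \subset \GL{d}$, which is isomorphic to $\F_{q^d}^*$: cuspidals correspond to $\mathrm{Gal}(\F_{q^d}/\F_q)$-orbits of characters of $T_d$ whose Galois stabilizer is trivial (\emph{regular} orbits), and such orbits are in natural bijection with $\C_d$ via the minimal polynomial of a generator of the image of any representative character. The dimension of such a cuspidal equals $(q-1)(q^2-1)\cdots(q^{d-1}-1)$, most cleanly established by realizing the representation as $\pm R_{T_d}^{\GL{d}}(\theta)$ via Deligne--Lusztig induction and computing its virtual character at the identity, though Green's original argument via Gelfand--Graev characters and Mackey theory gives the same answer without appealing to Deligne--Lusztig.

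For (b), given $\phi \in \CY_k$ with $\phi(c) = \lambda^{(c)}$, $m_c = |\lambda^{(c)}|$ and $d_c = \deg c$, take the Levi $L = \prod_{c : \phi(c)\ne\emptyset} \GL{d_c m_c} \subset \GL{k}$. Inside each factor $\GL{d_c m_c}$, decompose the parabolic induction of $\sigma_c^{\otimes m_c}$ from the block-diagonal Levi $\GL{d_c}^{m_c}$: its endomorphism algebra is a Hecke algebra of $S(m_c)$ with parameter $q^{d_c}$, which is semisimple and abstractly isomorphic to $\mathbb C[S(m_c)]$, so the isotypic components $\sigma_c^{(\mu)}$ are indexed by partitions $\mu$ of $m_c$ and are irreducible. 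Parabolically inducing $\bigotimes_c \sigma_c^{(\lambda^{(c)})}$ from $L$ to $\GL{k}$ produces an irreducible representation $\pi^\phi$: irreducibility holds because the cuspidal supports associated with distinct $c$'s are disjoint, so Mackey's intertwining formula contributes no nontrivial cross-terms. Distinctness as $\phi$ varies follows from the same disjointness combined with uniqueness of Harish-Chandra cuspidal data, and completeness from the equality $|\CY_k| = \#\{\text{conjugacy classes of } \GL{k}\}$, itself immediate from rational canonical form.

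The dimension formula assembles from three ingredients: the dimension of each cuspidal $\sigma_c$, the dimension of $\sigma_c^{(\lambda^{(c)})}$ inside $\GL{d_c m_c}$ (obtained via a principal specialization of the Hall--Littlewood polynomial at parameter $q^{-d_c}$, contributing the factor $q^{d_c n(\lambda^{(c)})}/\prod_{\square \in \lambda^{(c)}}(q^{d_c h(\square)}-1)$ against appropriate powers of $q^{d_c}-1$), and the index $[\GL{k}:P]$ of the parabolic subgroup with Levi $L$; telescoping across $c$ collapses these factors to the claimed product. The main obstacle is step (a): the classification of cuspidals together with the computation of their dimensions is the deepest input of the proof. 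Once this is secured the remainder is formal, most elegantly packaged via Zelevinsky's theorem identifying $\bigoplus_k R(\GL{k})$ with a tensor product over $\C$ of copies of the basic PSH algebra supported on the Young graph $\Y$, which yields both the parametrization by $\CY_k$ and the dimension formula in one stroke.
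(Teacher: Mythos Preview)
The paper does not give a proof of this theorem; immediately after the statement it refers the reader to Green, Zelevinsky, and Macdonald for the proof, construction of the representations, and their characters. Your sketch is essentially the argument found in those references---the Harish-Chandra decomposition into cuspidal series, the identification of the endomorphism algebra of a parabolically induced cuspidal with an Iwahori--Hecke algebra, and the final packaging via Zelevinsky's PSH-algebra structure---so there is nothing substantive to contrast.

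One small imprecision worth flagging: your description of the bijection between cuspidals of $\GL{d}$ and $\C_d$ via ``the minimal polynomial of a generator of the image of any representative character'' does not parse. The image of a character $\theta:\mathbb F_{q^d}^*\to\mathbb C^*$ is a finite cyclic subgroup of $\mathbb C^*$, and its elements have no minimal polynomial over $\F$. The honest statement is that Frobenius orbits of regular characters of $\mathbb F_{q^d}^*$ and elements of $\C_d$ are sets of the same cardinality; any explicit bijection requires a choice (for instance a fixed isomorphism of $\mathbb F_{q^d}^*$ with its character group). Since the theorem only asserts the existence of a parametrization, this does not affect the argument, but the phrasing suggests a canonicity that is not there.
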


For the proof, construction of the representations and their characters see \cite{Green},
\cite{Zel}, \cite{M}. For $f\in\CY_d$ let $\pi^f$ denote the corresponding irreducible
representation, $H(\pi^f)$ the space of this representation, and let $\chi^f(\cdot)$ be its
conventional character (i.e.\ matrix trace of $\pi^f(\cdot)$).

\begin{corollary}
\label{cor_decomposition_of_restriction}
 The set of all  traces of $\A(\GLB)_n$ is a simplicial cone spanned by  traces
 $\chi^f$. In other words, if $\chi^n$ is a trace of $\A(\GLB)_n$, then there exist unique
 nonnegative coefficients $c(f)$ such that
 $$
  \chi^n(\cdot)=\sum_{f\in\CY_n} c(f) \chi^f(\cdot).
 $$
\end{corollary}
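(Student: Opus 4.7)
The statement is essentially the standard classification of central positive functionals on a finite group algebra, transported through the isomorphism of Proposition \ref{prop_A_n_as_group_algebra}. My plan is therefore to reduce everything to the semisimple structure of $\mathbb{C}(\GL{n})$ and then quote the elementary fact about traces on matrix algebras.

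First I would invoke Proposition \ref{prop_A_n_as_group_algebra} to identify $\A(\GLB)_n$ with $\mathbb{C}(\GL{n})$; this identification is an algebra isomorphism, so traces correspond to traces. Next, by Theorem \ref{theorem_irreps_of_gl}, the group $\GL{n}$ has irreducible representations $\pi^f$ indexed by $f \in \CY_n$, and by Maschke/Wedderburn the group algebra decomposes as a finite direct sum of matrix algebras
\[
 \mathbb{C}(\GL{n}) \;\cong\; \bigoplus_{f\in\CY_n} \mathrm{End}(H(\pi^f)).
\]
Any central linear functional on such a direct sum must vanish on commutators; since on each summand $\mathrm{End}(H(\pi^f)) \cong M_{d_f}(\mathbb{C})$ the space of central functionals is one-dimensional and spanned by the matrix trace $\mathrm{tr}_{\pi^f}$, we obtain
\[
 \chi^n(\cdot) \;=\; \sum_{f\in\CY_n} c(f)\, \chi^f(\cdot)
\]
for unique complex scalars $c(f)$, with $\chi^f = \mathrm{tr}_{\pi^f}$ the conventional character.

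It then remains to show that positive definiteness forces $c(f) \ge 0$. For this I would evaluate $\chi^n$ on a rank-one projection $p_f \in \mathrm{End}(H(\pi^f))$ written as $W^*W$ for a suitable $W$: the orthogonality relations (or equivalently the block structure) give $\chi^g(p_f) = \delta_{f,g}$, so $\chi^n(W^*W) = c(f)$, which is non-negative by hypothesis. Uniqueness of the coefficients is just linear independence of the $\chi^f$, which is automatic because they live in distinct blocks of the Wedderburn decomposition.

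I do not expect any genuine obstacle here: the only subtlety is bookkeeping the isomorphism of Proposition \ref{prop_A_n_as_group_algebra} (which rescales the basis by $(q-1)^n q^{n(n-1)/2}$), but this affects neither centrality, nor positivity, nor the shape of the cone; it only rescales what one calls the ``natural'' generators. The simpliciality of the cone is then just the fact that the extreme rays of a positive orthant $\mathbb{R}_{\ge 0}^{|\CY_n|}$ are its coordinate axes.
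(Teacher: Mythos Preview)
Your argument is correct and follows the same route as the paper's proof: transport through the isomorphism of Proposition~\ref{prop_A_n_as_group_algebra} and then invoke the standard fact that positive-definite central functions on a finite group are non-negative combinations of irreducible characters. The paper simply cites this last step as ``well-known'', whereas you unpack it via the Wedderburn decomposition and evaluation on rank-one projections; the substance is identical.
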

\begin{proof}
 $\A(\GLB)_n$ is isomorphic to the conventional group algebra of $\GL{n}$. Under this correspondence a  trace of
 $\A(\GLB)_n$ turns into the character of $\GL{n}$, i.e.\ central (class)
 positive-definite function on the group. It is well-known that characters of a finite group form
 a cone spanned by the characters (matrix traces) of the irreducible representations.
\end{proof}

Next we describe the interrelations between  traces and inclusions $i_n$.

Embed $\GL{n-1}\times \GL{1}$ into $\GL{n}$ as the subgroup of block diagonal matrices. Consider
the subgroup $U^n_n\subset\GL{n}$ consisting of unipotent upper triangular matrices $[u_{ij}]$
such that $u_{ij}$ is non-zero only for $j=n$ (and $u_{nn}=1$). Note that $\GL{n-1}\times \GL{1}$
normalizes $U_n^n$.

\begin{theorem}
\label{Theorem_parabolic_restriction}
 Suppose that $f\in \CY_n$. Let $\widehat H(\pi^f)$ denote the subspace of $U_n^n$-invariant
 vectors in $H(\pi^f)$. And let $\widehat\pi^f$ denote the representation of $\GL{n-1}\times \GL{1}$ in this
 subspace. Let $\{f_i\}$ be all families in $\CY_{n-1}$ for which there exist $y_i\in\C_1$
 such that
 \begin{enumerate}
  \item $f_i(x)=f(x)$ for $x\ne y_i$,
  \item The difference of the Young diagrams $f(y_i)\setminus f_i(y_i)$ is a single box.
 \end{enumerate}
 Finally, let $f\setminus f_i$ denote the family from $\CY_1$ such that $(f\setminus f_i)(y_i)$ is
 the one box diagram.

 We have
 $$
  \widehat\pi^f=\bigoplus_i \pi^{f_i} \otimes \pi^{f\setminus f_i},
 $$
\end{theorem}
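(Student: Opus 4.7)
The plan is to identify the functor $\pi \mapsto \pi^{U_n^n}$ with Harish--Chandra (parabolic) restriction and then evaluate it via Zelevinsky's Hopf-algebra formalism for $R = \bigoplus_{k \ge 0} R(\GL{k})$. First I observe that $P = (\GL{n-1}\times\GL{1})\ltimes U_n^n$ is a maximal parabolic in $\GL{n}$---the stabilizer of the hyperplane spanned by the first $n-1$ basis vectors---with Levi $L=\GL{n-1}\times\GL{1}$ and unipotent radical $U_n^n$. For a finite reductive group the Jacquet functor of $U_n^n$-coinvariants is parabolic restriction $^*R^{\GL{n}}_L$, left adjoint to Harish--Chandra induction $R^{\GL{n}}_L$; since $U_n^n$ is a finite $p$-group and we work in characteristic zero, coinvariants coincide with invariants. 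Hence $\widehat{\pi}^f \cong {}^*R^{\GL{n}}_L(\pi^f)$ as a representation of $L$, which reduces the theorem to computing this parabolic restriction.

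Next I would invoke the PSH-algebra decomposition of Zelevinsky on $R$, whose multiplication is parabolic induction from block Levi subgroups and whose comultiplication is parabolic restriction (see \cite{Zel}, \cite[Ch.~IV]{M}). The main structure theorem identifies $R \cong \bigotimes_{c\in\C}\Lambda[c]$, a tensor product over $\C$ of copies of the Hopf algebra of symmetric functions $\Lambda$ with grading rescaled by $\deg c$, in such a way that the irreducible character $\pi^f$ corresponds to the product of Schur functions $\prod_{c\in\C} s_{f(c)}[c]$.

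The last step is to extract the component of $\Delta \pi^f$ lying in the summand $R(\GL{n-1})\otimes R(\GL{1})$. Under the PSH decomposition the coproduct factors as $\Delta = \bigotimes_c \Delta_c$, and on Schur functions
$$\Delta s_\lambda \;=\; \sum_{\mu\subseteq\lambda} s_\mu\otimes s_{\lambda/\mu}.$$
A box residing at a polynomial $c\in\C_d$ contributes $d$ to the total degree, so the second tensor factor can have degree one only when exactly one $c\in\C_1$ contributes a single-box skew shape and every other $f(c)$ is preserved. This produces precisely the sum over pairs $(y_i, f_i)$ with $f_i(x)=f(x)$ for $x\neq y_i$ and $|f(y_i)\setminus f_i(y_i)|=1$; the right-hand tensor factor is then the one-box family $f\setminus f_i\in\CY_1$, reproducing the stated decomposition of $\widehat{\pi}^f$.

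The main obstacle is the first step---confirming that taking $U_n^n$-invariants agrees with Harish--Chandra restriction in a way compatible with the PSH comultiplication on $R$. This is standard for finite reductive groups but requires care, since the theorem is phrased in terms of invariants rather than the usual coinvariants and one must verify that under the identification $\widehat{\pi}^f$ really corresponds to the coefficient of $\pi^{f_i}\boxtimes \pi^{f\setminus f_i}$ in the coproduct. Once this is granted the remainder is purely combinatorial: a Pieri-type coproduct identity in $\Lambda$ combined with a degree count on $\C$.
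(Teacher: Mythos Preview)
Your proposal is correct and follows exactly the approach the paper intends: the paper's own proof is simply the citation ``See e.g.\ \cite[Chapter III]{Zel},'' and what you have written is a faithful unpacking of that reference---identifying $U_n^n$-invariants with parabolic restriction for the maximal parabolic with Levi $\GL{n-1}\times\GL{1}$, then reading off the answer from Zelevinsky's PSH decomposition $R\cong\bigotimes_{c\in\C}\Lambda[c]$ and the coproduct rule for Schur functions. Your caveat about invariants versus coinvariants is well placed but, as you note, harmless in characteristic zero; the degree count forcing $c\in\C_1$ is exactly the point.
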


\begin{proof} See e.g.\ \cite[Chapter III]{Zel}.
\end{proof}

\subsection{Structure of $\A(\GLB)$}

We need to introduce some notations to state an important corollary of Theorem
\ref{Theorem_parabolic_restriction}.

\begin{definition}
 For two families $f\in\CY_n$ and $g\in\CY_{n-1}$ we say that
$g$ precedes $f$ and write $g\prec_{\GLB} f$ if the values of $f$ and $g$ on the irreducible
polynomial $``x-1"\in\C_1$ differ by one box, i.e. $f(``x-1")\setminus g(``x-1")=\square$, and for
all other polynomials the value is the same, i.e.\ $f(u)=g(u)$ for all $u\ne ``x-1"$.
\end{definition}

\begin{theorem}[Branching rule]
\label{proposition_branching_of_characters_GLB}
 Let $\pi^f$  be the irreducible representation of algebra
 $\A(\GLB)_n$ (equivalently, of the group $\GL{n}$) parameterized by $f\in\CY_n$ and let
$\chi^f$ be its conventional character (i.e.\ matrix trace). The restrictions of $\pi^f$ and
$\chi^f$ to the subalgebra
 $\A(\GLB)_{n-1}$ admit the following decomposition:
$$
\chi^f\rule[-2.5mm]{.4pt}{5mm}_{\,\A(\GLB)_{n-1}}=\sum_{g\prec_{\GLB} f} \chi^g,
$$
equivalently,
$$
 \pi^f \rule[-2.5mm]{.4pt}{5mm}_{\,\A(\GLB)_{n-1}} = \mathcal N \oplus \bigoplus_{g\prec_{\GLB} f} \chi^g,
$$
where $\mathcal N$ is a zero representations of $\A(\GLB)_{n-1}$ of dimension
$\dim(f)-\sum_{g\prec_{\GLB} f} \dim(g)$.
\end{theorem}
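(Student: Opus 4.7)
The plan is to reduce this branching rule to the parabolic restriction theorem (Theorem \ref{Theorem_parabolic_restriction}) by rewriting the embedding $i_n\colon\A(\GLB)_{n-1}\hookrightarrow\A(\GLB)_n$ in group-algebra terms as a product of averaging idempotents, and then isolating the summand labelled by $``x-1"$ via a $\GL{1}$-invariance argument.

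First I would translate $i_n$ into idempotents. Using the isomorphism $e_g\leftrightarrow (q-1)^n q^{n(n-1)/2}\, I^\GLB_g$ of Proposition \ref{prop_A_n_as_group_algebra} together with the fact that every element of $Ext^\GLB(g)$ factors uniquely as $u\cdot(g\oplus\alpha)$ with $u\in U_n^n$ and $\alpha\in\F^*$, a direct check of the normalizing constants gives
\[
 i_n(e_g)=\Pi_U\, e_g\, \Pi_{\GL{1}},\qquad g\in\GL{n-1},
\]
where $\Pi_U=\tfrac{1}{|U_n^n|}\sum_{u\in U_n^n} e_u$ and $\Pi_{\GL{1}}=\tfrac{1}{q-1}\sum_{\alpha\in\F^*} e_{I_{n-1}\oplus\alpha}$, with $U_n^n$ the unipotent subgroup from Theorem \ref{Theorem_parabolic_restriction}. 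Because $\GL{n-1}$ normalizes $U_n^n$ and commutes with $\GL{1}$, while $\GL{1}$ in turn normalizes $U_n^n$, the three factors pairwise commute, and $\Pi_Q:=\Pi_U\Pi_{\GL{1}}=\tfrac{1}{|Q|}\sum_{q\in Q} e_q$ is the projection onto $Q$-invariants for the semidirect product $Q=U_n^n\rtimes\GL{1}$. In particular $i_n(e_g)=e_g\Pi_Q=\Pi_Q e_g$.

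Next I would use this to split $V=\pi^f$ as an $\A(\GLB)_{n-1}$-module. The identity $i_n(e_g)v=e_g\Pi_Q v$ immediately yields the $\A(\GLB)_{n-1}$-invariant decomposition $V=\Pi_Q V\oplus(1-\Pi_Q)V$ in which the second summand is annihilated by the whole image of $i_n$, and hence realizes the zero representation $\mathcal N$ of the statement. The remaining problem is to decompose $\Pi_Q V=\Pi_{\GL{1}}(\Pi_U V)$ as a $\GL{n-1}$-module. Theorem \ref{Theorem_parabolic_restriction} supplies
\[
 \Pi_U V=\widehat{\pi^f}=\bigoplus_i \pi^{f_i}\otimes \pi^{f\setminus f_i}
\]
as $\GL{n-1}\times\GL{1}$-modules, indexed by linear polynomials $y_i\in\C_1$ and corresponding $f_i\in\CY_{n-1}$. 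Applying $\Pi_{\GL{1}}$ gives $\Pi_Q V=\bigoplus_i \pi^{f_i}\otimes(\pi^{f\setminus f_i})^{\GL{1}}$; since each $\pi^{f\setminus f_i}$ is a one-dimensional character of $\F^*$, the $i$-th summand survives exactly when this character is trivial. Under Green's parameterization of $\widehat{\F^*}$ by $\C_1$ the trivial character corresponds precisely to $y_i=``x-1"$, which is the defining condition of $f_i\prec_{\GLB} f$. Therefore $\Pi_Q V=\bigoplus_{g\prec_{\GLB} f}\pi^g$, which is the claimed decomposition, and the dimension formula for $\mathcal N$ follows by subtraction.

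The main obstacle is the conventions-sensitive identification of the trivial character of $\F^*$ with the family supported on $y=``x-1"$ in Green's parameterization; this is exactly what distinguishes the unipotent covering $\prec_{\GLB}$ from the general covering appearing in full parabolic restriction. Once this is pinned down from \cite{Green} or \cite[Chapter IV]{M}, the rest is a routine translation between averaging over $Ext^\GLB(g)$ and averaging over the parabolic subgroup $Q\subset\GL{n}$.
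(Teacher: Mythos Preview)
Your proof is correct and follows essentially the same route as the paper: both derive the branching rule from Theorem~\ref{Theorem_parabolic_restriction} by observing that the parabolic embedding $i_{n-1}$ amounts to averaging over $U_n^n$ and over $\GL{1}$, so that the restriction is the projection onto $\GL{1}$-invariants in $\widehat{\pi^f}$. Your version is more explicit about the idempotents $\Pi_U$, $\Pi_{\GL{1}}$ and about the convention identifying the trivial character of $\F^*$ with the polynomial $``x-1"$, but the underlying argument is the same.
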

\noindent {\bf Remark 1. } By zero representation we mean the action of $\A(\GLB)_{n-1}$ by the
identical zero in a vector space of arbitrary dimension.

\noindent {\bf Remark 2. } Theorem \ref{proposition_branching_of_characters_GLB} implies, in
particular, that the restriction of $\pi^f$ to $\A(\GLB)_{n-1}$ is multiplicity free. This property
was mentioned by various authors, the first proof was given by A.~Zelevinsky \cite[Chapter
III]{Zel} using the Hopf algebras approach. Now there exist simple direct proofs of this fact, see
\cite{Goryachko}, \cite{AG}.

\noindent {\bf Remark 3.} As opposed to the situation with parabolic embeddings, the restrictions
of irreducible representations of $\GL{n}$ to the naturally embedded subgroup $\GL{n-1}$ are
\emph{not} multiplicity free, see  \cite{Th_restrictions}, \cite[Chapter III, Section 13]{Zel}.

\begin{proof}
 This follows from Theorem \ref{Theorem_parabolic_restriction} and we use the notations of that
 theorem. Indeed, the summation in the
 definition of parabolic embedding $i_{n-1}$ introduces averaging over $U_n^n$ and over $\GL{1}$.
 Therefore, the parabolic embedding translates into the projection on $\GL{1}$--invariants in $\widehat \pi^f$.
\end{proof}

Now the structure of locally semisimple algebra $\A(\GLB)$. can be encoded via its \emph{Bratteli
diagram} \cite{Br}, \cite{VK_AF}, \cite{Kerov_book}.
\begin{proposition}
The Bratelli diagram of algebra $\A(\GLB)$ is  a graded graph $B({\GLB})$ supplemented with
additional numbers, labels of the vertices. The set $B({\GLB})_n$ of vertices at level $n$ is
$\CY_n$. The label $l(f)$ of the vertex $f\in\B(\GLB)_n$ is the dimension of the irreducible
representation of $\GL{n}$ parameterized by $f$, the formula for its computation is given in
Theorem \ref{theorem_irreps_of_gl}. An edge joins vertex $f$ and vertex $g$ is and only if
$g\prec_{\GLB} f$.
\end{proposition}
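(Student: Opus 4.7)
The plan is to assemble this proposition directly from the three main ingredients already proved: the identification of $\A(\GLB)$ as an inductive limit (Proposition \ref{proposition_A_as_inductive_limit}), Green's classification of irreducible representations of $\GL{n}$ (Theorem \ref{theorem_irreps_of_gl}), and the branching rule under parabolic inclusions (Theorem \ref{proposition_branching_of_characters_GLB}). The statement is essentially a summary of the structural content carried by these three results once it is phrased in the standard Bratteli-diagram language for a locally semisimple algebra.

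First, I would briefly recall what data define a Bratteli diagram $B(A)$ of a locally semisimple algebra $A = \ilim A_n$: the vertices at level $n$ are in bijection with the isomorphism classes of irreducible representations of $A_n$, and the number of edges from a vertex $f$ at level $n$ to a vertex $g$ at level $n-1$ equals the multiplicity of the irreducible $A_{n-1}$-module labelled by $g$ in the restriction to $A_{n-1}$ of the irreducible $A_n$-module labelled by $f$. Because the embeddings $i_n$ are not unital and $\A(\GLB)$ lacks a unit, each vertex must carry a label recording the dimension of the corresponding irreducible representation; this is the standard augmentation for non-unital inductive limits (cf.\ \cite{Br}, \cite{VK_AF}, \cite{Kerov_book}). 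Then I would invoke Proposition \ref{proposition_A_as_inductive_limit} to replace $A_n$ by $\mathbb C(\GL{n})$.

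With this setup, the identification of the vertex set $B(\GLB)_n$ with $\CY_n$ is immediate from Green's Theorem \ref{theorem_irreps_of_gl}, and the formula for the label $l(f)$ is exactly the $q$-hook dimension formula stated there. For the edges, Theorem \ref{proposition_branching_of_characters_GLB} asserts that the restriction of $\pi^f$ to $\A(\GLB)_{n-1}$ decomposes (modulo a zero-action summand accounting for the non-unitality) as a \emph{multiplicity-free} sum of the irreducibles $\pi^g$ with $g \prec_{\GLB} f$. Thus the multiplicity of $g$ in the restriction of $f$ is $1$ precisely when $g \prec_{\GLB} f$ and $0$ otherwise, which is exactly the edge rule claimed. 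The zero-dimensional summand $\mathcal N$ contributes nothing to the Bratteli diagram beyond being absorbed into the discrepancy between $l(f)$ and $\sum_{g \prec_{\GLB} f} l(g)$, which is precisely why the labels $l(f)$ are needed.

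There is no real obstacle here; the only thing worth being careful about is the bookkeeping between the non-unital inclusion $i_{n-1}$ and the honest representation-theoretic restriction (so that one does not accidentally record edge multiplicities greater than one, or confuse the size of $\mathcal N$ with an edge). The precise reconciliation is already built into the parabolic restriction formalism of Theorem \ref{Theorem_parabolic_restriction} and its translation into Theorem \ref{proposition_branching_of_characters_GLB}, so no further calculation is needed.
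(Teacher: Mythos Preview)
Your proposal is correct and takes essentially the same approach as the paper, which dispatches the proposition in a single sentence: ``This is a reformulation of the branching rule of Theorem \ref{proposition_branching_of_characters_GLB}.'' You have simply unpacked that one line by making explicit the roles of Proposition \ref{proposition_A_as_inductive_limit} and Theorem \ref{theorem_irreps_of_gl} in identifying the vertices and labels, and of Theorem \ref{proposition_branching_of_characters_GLB} in determining the edges; this is exactly the intended argument.
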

\begin{proof} This is a reformulation of the \emph{branching rule} of
Theorem \ref{proposition_branching_of_characters_GLB}.
\end{proof}

For convenience of the reader we recast in our setting the general procedure for the reconstruction
of the involutive algebra by its Bratteli diagram.

By the well-known theorem algebra $\A(\GLB)_n$ is isomorphic to the direct sum of matrix algebras
of ranks equal to the dimensions of its irreducible representations. Therefore,
\begin{equation}
\label{eq_brat_1}
 \A(\GLB)_n=\bigoplus_{f\in B({\GLB})_n} Mat(l(f),l(f))
\end{equation}
The inclusions $i_n:\A(\GLB)_n\hookrightarrow \A(\GLB)_{n+1}$ can be reconstructed as follows. For
every $f\in B({\GLB})_n$ fix the embedding
$$
 \bigoplus_{g\prec_\GLB f} Mat(l(g),l(g)) \hookrightarrow Mat(l(f),l(f))
$$
as block-diagonal matrices. Note that here we need the inequality
$$
 \sum_{g\prec_\GLB f} l(g)\le l(f)
$$
to be satisfied. Let $i_{g,f}$ denote the above embedding considered as a map from the matrix
algebra corresponding to $g$ to the matrix algebra corresponding to $f$ viewed as a subalgebra of
$\A({\GLB})_{n+1}$.

Now for $a=\sum_{f\in B({\GLB})_n} m_f$, with $m_f\in Mat(l(f),l(f))$ in \eqref{eq_brat_1}, we set
$$
 i_n(a)=\sum_{f\in B({\GLB})_{n+1}} \sum_{g\prec f} i_{g,f} m_g.
$$
Algebra $\A(\GLB)$ is reconstructed (up to isomorphism) as the inductive limit of $\A(\GLB)_n$.

\subsection{Some subalgebras of $\A({\GLB})$}
Let $\B_n\subset\GL{n}$ be the (Borel) subgroup of all upper-triangular matrices. We call an
element
$$
 a=\sum_{g\in\GL{n}} c(g) e_g \in \mathbb C(\GL{n})
$$
$\B_n$--biinvariant if $c(g)=c(b_1 g b_2)$ for any $g\in\GL{n}$ and $b_1,b_2\in \B_n$. Put it
otherwise, $\B_n$--biinvariant element is a linear combination of characteristic functions of
double cosets $\B_n g \B_n$.

\begin{definition}
The Iwahori--Hecke algebra $\H(n)$ is defined as the algebra of $\B_n$--biinvariant elements in
$\mathbb C(\GL{n})$.
\end{definition}

 The following well-known proposition describes the structure of $\H(n)$.

\begin{proposition}
\label{Proposition_finite_Hecke}
 The algebra $\H(n)$ has dimension $n!$ and has a linear basis $s_\omega$ enumerated by
 permutation matrices $\omega$:
 $$
  s_\omega=\frac{1}{|\B_n|} \sum_{g\in \B_n \omega \B_n} e_g.
 $$
 As an algebra $\H(n)$ is generated by $n-1$ elements $s_{(i,i+1)}$ (where $(i,i+1)$ is elementary
 transposition permuting $i$ and $i+1$) subject to relations
\begin{enumerate}
\item $s_{(i,i+1)} s_{(j,j+1)}= s_{(j,j+1)} s_{(i,i+1)},\quad |i-j|>1,$
\item $s_{(k,k+1)} s_{(k+1,k+2)} s_{(k,k+1)} =s_{(k+1,k+2)} s_{(k,k+1)} s_{(k+1,k+2)},$
\item $s_{(k,k+1)}^2=(q-1) s_{(k,k+1)} + q s_e,$
\end{enumerate}
where $e$ is identical permutation. $s_e$ is the unit element in $\H(n)$.
\end{proposition}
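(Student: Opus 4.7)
The plan has two clean parts: first establish the additive structure (dimension and basis), then establish the multiplicative structure (generation and relations).

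For the dimension and basis, I would invoke the \emph{Bruhat decomposition} $\GL{n} = \bigsqcup_{\omega \in S_n} \B_n \omega \B_n$, where the disjoint union runs over the $n!$ permutation matrices. Since every element of $\H(n)$ is by definition a linear combination of indicator functions of $\B_n$–$\B_n$ double cosets, the elements $s_\omega = \frac{1}{|\B_n|} \sum_{g \in \B_n \omega \B_n} e_g$ form a basis, giving $\dim \H(n) = n!$. This also shows immediately that $s_e$ (with $\omega = e$) acts as the identity, since convolution with $\frac{1}{|\B_n|} \sum_{b \in \B_n} e_b$ averages over $\B_n$, which fixes any $\B_n$-biinvariant element.

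For generation and relations, the key computational input is the standard formula
\[
 \B_n \sigma_i \B_n \cdot \B_n \sigma_i \B_n = \B_n \sqcup \B_n \sigma_i \B_n, \qquad |\B_n \sigma_i \B_n| = q|\B_n|,
\]
where $\sigma_i = (i,i+1)$. From this the \emph{quadratic relation} follows by coset counting in two steps: the coefficient of $e_e$ in $s_{\sigma_i}^2$ equals $|\B_n \sigma_i \B_n|/|\B_n|^2 = q/|\B_n|$, while the coefficient of $e_{\sigma_i}$ equals $(q-1)/|\B_n|$ (by counting pairs $(g,h) \in (\B_n\sigma_i\B_n)^2$ with $gh = \sigma_i$, using the Bruhat cell structure). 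Matching against $A \cdot s_{\sigma_i} + B \cdot s_e$ yields $A = q-1$, $B = q$. The \emph{commutation relation} for $|i-j|>1$ follows from the fact that in this case the double cosets $\B_n \sigma_i \B_n$ and $\B_n \sigma_j \B_n$ commute as subsets of $\GL{n}$ modulo $\B_n$. The \emph{braid relation} reduces to the identity $\B_n \sigma_i \sigma_{i+1} \sigma_i \B_n = \B_n \sigma_{i+1} \sigma_i \sigma_{i+1} \B_n$, which is just the braid relation for the underlying reduced word, combined with the general fact (provable by induction on length using the multiplication rule below) that whenever $\omega = \sigma_{i_1} \cdots \sigma_{i_k}$ is a \emph{reduced} expression, the product $s_{\sigma_{i_1}} \cdots s_{\sigma_{i_k}}$ equals $s_\omega$.

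That last fact is also what proves that the $s_{\sigma_i}$ generate $\H(n)$: by induction on the Coxeter length $\ell(\omega)$, writing $\omega = \omega' \sigma_i$ with $\ell(\omega) = \ell(\omega') + 1$, one has the multiplication rule
\[
 s_{\omega'} \cdot s_{\sigma_i} = \begin{cases} s_{\omega' \sigma_i}, & \ell(\omega' \sigma_i) > \ell(\omega'), \\ (q-1) s_{\omega'} + q \, s_{\omega' \sigma_i}, & \ell(\omega' \sigma_i) < \ell(\omega'), \end{cases}
\]
which again is verified by a coset count using the Bruhat decomposition. Finally, to confirm that the three relations are a \emph{complete} presentation, I would argue that the abstract algebra $\widetilde \H(n)$ defined by the generators and relations surjects onto $\H(n)$ (the relations hold in $\H(n)$), and $\widetilde \H(n)$ is spanned by the images of reduced expressions in $S_n$ (using the quadratic relation to eliminate squares and the braid/commutation relations to pass between reduced words, invoking Matsumoto's theorem), so $\dim \widetilde \H(n) \le n! = \dim \H(n)$, forcing the surjection to be an isomorphism.

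The main obstacle is purely computational: the coefficient $(q-1)$ in the quadratic relation requires a careful count inside a Bruhat double coset and is the one place where a genuinely finite-field calculation (as opposed to formal symmetric-group manipulation) enters. Everything else is bookkeeping via Bruhat decomposition and Matsumoto's theorem on reduced words.
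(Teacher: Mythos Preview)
Your proposal is correct and follows the standard textbook route (Bruhat decomposition for the basis, reduced-word multiplication rule for generation, Matsumoto's theorem plus a dimension count for the presentation). The paper, however, does not give its own proof at all: it simply cites Iwahori's original paper \cite{I} and Bourbaki \cite{Bourbaki}. What you have written is essentially the argument one finds in those references, so there is no genuine divergence in method---you have supplied the details the paper chose to omit.
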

\begin{proof}
 See \cite{I}, \cite{Bourbaki}.
\end{proof}

Let us embed $\H(n)$ into $\H(n+1)$ as a subalgebra spanned by first $n-1$ out of $n$ generators.
\begin{definition} The infinite--dimensional Iwahori--Hecke algebra $\H(\infty)$ is defined as the inductive limit of
$\H(n)$:
$$
 \H(\infty)=\varinjlim_{n\to\infty} \H(n) =\bigcup_n \H(n).
$$
\end{definition}

Note that $\H(\infty)$ is generated by countably many generators $s_{(i,i+1)}$ subject to the same
relations as in Proposition \ref{Proposition_finite_Hecke}.

Through the identification $\mathbb C(\GL{n})\simeq \A({\GLB})_n$ we can view $\H(n)$ as a
subalgebra of $\A(\GLB)_n$. The following proposition is straightforward.
\begin{proposition}
 The restriction of the embedding $i_n: \A({\GLB})_n \to \A({\GLB})_{n+1}$ on the subalgebra
 $\H(n)$
 coincides with above embedding $\H(n)\to \H(n+1)$, therefore, $\H(\infty)\subset\A(\GLB)$. $\H(\infty)$ coincides with
 subalgebra of $\B$--biinvariant functions in $\A(\GLB)\subset L_1(\GLB,\mu_{\GLB})$.
\end{proposition}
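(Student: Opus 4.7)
The plan is to verify three things in order: (i) that the parabolic embedding $i_n$ restricts to the standard Hecke embedding on $\H(n)$, (ii) that the $\B$-biinvariants in $\A(\GLB)_n$ are exactly the image of $\H(n)$, and (iii) assemble these via the inductive limit. The most substantive step is (i), which is a Bruhat-cell calculation.

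For (i), I would fix a permutation matrix $\omega\in S_n$ and trace the basis element $s_\omega=\frac{1}{|\B_n|}\sum_{g\in\B_n\omega\B_n}e_g$ through the isomorphism of Proposition~\ref{prop_A_n_as_group_algebra}. Since $|\B_n|=(q-1)^n q^{n(n-1)/2}$, the scaling factor cancels and $s_\omega$ corresponds to $\sum_{g\in\B_n\omega\B_n}I^\GLB_g$ in $\A(\GLB)_n$. Applying $i_n$ yields $\sum_h I^\GLB_h$, summed over those $h\in\GL{n+1}$ with $h^{(n)}\in\B_n\omega\B_n$ and vanishing last row up to the diagonal. The claim is that this set equals $\B_{n+1}\omega'\B_{n+1}$, where $\omega'=\omega\oplus(1)$ is the extension fixing the $(n{+}1)$st coordinate. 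This is a direct $2\times 2$ block computation: writing a general $b\in\B_{n+1}$ as $\bigl(\begin{smallmatrix}u & v\\ 0 & a\end{smallmatrix}\bigr)$ with $u\in\B_n$, $a\in\F^*$, one computes $b_1\omega' b_2=\bigl(\begin{smallmatrix}u_1\omega u_2 & u_1\omega v_2+v_1 a_2\\ 0 & a_1 a_2\end{smallmatrix}\bigr)$, whose $n\times n$ corner exhausts $\B_n\omega\B_n$ and whose last row is $(0,\dots,0,*)$ with arbitrary $*\in\F^*$; conversely, any $h$ of the advertised form factors as such a product by choosing $u_1,u_2$ from a Bruhat decomposition of $h^{(n)}$ and absorbing the last column into $v_1$. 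Since $\omega'\in S_{n+1}$ also fixes the $(n{+}1)$st letter, its image $s_{\omega'}$ in $\H(n+1)\subset\A(\GLB)_{n+1}$ is exactly the same sum, proving the restriction matches the Hecke embedding defined in the text.

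With (i) established, the inclusion $\H(\infty)\subset\A(\GLB)$ is immediate, as both sides are inductive limits along compatible embeddings.

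For (iii), suppose $f\in\A(\GLB)_n$ is $\B$-biinvariant, i.e.\ $f(bXb')=f(X)$ for all $b,b'\in\B$. Write $f(X)=c(X^{(n)})$ on $\GLB_n$ and zero elsewhere. The key observation is that if $X\in\GLB_n$ then $bXb'\in\GLB_n$, and moreover
$$
(bXb')^{(n)} = b^{(n)}\, X^{(n)}\, (b')^{(n)}.
$$
This reduces to checking that the entries $X_{kl}$ with $k>n$ and $l\le n$ vanish (which holds because $X\in\GLB_n$ is upper-triangular below row $n$), so that in the block product $bXb'$ only the top-left $n\times n$ corners of $b$ and $b'$ contribute to the top-left $n\times n$ block. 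The restriction map $\B\to\B_n$, $b\mapsto b^{(n)}$, is surjective, so $\B$-biinvariance of $f$ on $\GLB_n$ translates into $\B_n$-biinvariance of $c$ on $\GL{n}$; on $\GLB\setminus\GLB_n$ both sides vanish, since $X\in\GLB_n\iff bXb'\in\GLB_n$. Hence the $\B$-biinvariant part of $\A(\GLB)_n$ is exactly $\H(n)$. Taking unions over $n$ and using (i), the $\B$-biinvariant subalgebra of $\A(\GLB)$ equals $\bigcup_n \H(n)=\H(\infty)$.

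The only place requiring real care is the set-theoretic identification of Bruhat cells in step (i); the remaining manipulations are bookkeeping about block-upper-triangular matrices, and no deep input beyond the basic Bruhat decomposition of $\GL{n+1}$ is needed.
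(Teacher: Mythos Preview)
Your argument is correct. The paper itself gives no proof of this proposition, merely labeling it ``straightforward''; you have supplied exactly the natural verification that the authors had in mind---tracking the double-coset basis element $s_\omega$ through the isomorphism of Proposition~\ref{prop_A_n_as_group_algebra} and the parabolic embedding, and identifying the result with $s_{\omega'}\in\H(n+1)$ via the block-Bruhat computation. The only cosmetic slip is that your paragraph beginning ``For (iii)'' is really carrying out what your plan called step (ii); otherwise the bookkeeping (the cancellation $|\B_n|=(q-1)^n q^{n(n-1)/2}$, the stability of $\GLB_n$ under $\B\times\B$, and the corner identity $(bXb')^{(n)}=b^{(n)}X^{(n)}(b')^{(n)}$) is all accurate.
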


We want to define yet another important subalgebra of $\A(\GLB)$. Let $I_{\B}\in\A({\GLB})$ denote
the indicator function of $\B$ in the realization of $\A({\GLB})$ as a subalgebra of
$L_1(\GLB,\mu_{\GLB})$.

\begin{definition}
The \emph{unipotent subalgebra} $\A(\Uni)$ is defined as a two-sided ideal in $\A(\GLB)$ generated
by $I_{\B}$.
\end{definition}

 Our definitions imply that $\H(\infty)\subset \A(\Uni) \subset \A({\GLB})$.

\smallskip

We also note that the Bratteli diagram of $\A(\GLB)$ described in the previous section is a
disjoint union of countably many copies of the \emph{Young graph} with shifted gradings and
different labels of the vertices. Therefore, $\A(\GLB)$ is the direct sum of ideals corresponding
to the connected components of its Bratteli diagram. We remark that $\A(\Uni)$ is precisely the
component consisting of families $f\in\CY$ such that $f(u)=\emptyset$, unless $u=``x-1''$, see also
Proposition \ref{prop_def_unipotent} for a related fact.

\subsection{Classification of traces of $\A(\GLB)$} \label{Section_character_classif}

Although, we are not going to use it directly, but the following abstract statement
holds:

\begin{proposition}
The description of  traces of a locally semisimple algebra depends solely on its Bratelli diagram
without labels. In other words, if $\mathcal X$ and $\mathcal Y$ are two locally semisimple
algebras, whose Bratelli diagrams have the same sets of vertices and edges but, perhaps, different
labels of vertices, then there is a canonical correspondence between their  traces.
\end{proposition}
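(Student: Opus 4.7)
The plan is to parameterize the cone of traces of a locally semisimple algebra by a cone of non-negative \emph{harmonic functions} on the underlying unlabeled Bratteli diagram, thereby making the correspondence between the two algebras' traces fall out for free. Write $\mathcal X = \ilim_n \mathcal X_n$ with $\mathcal X_n \simeq \bigoplus_{v \in V_n} \mathrm{Mat}(l_X(v), l_X(v))$ and similarly $\mathcal Y_n \simeq \bigoplus_{v \in V_n} \mathrm{Mat}(l_Y(v), l_Y(v))$; by hypothesis $\mathcal X$ and $\mathcal Y$ share the same vertex set $V = \bigsqcup_n V_n$ and the same edge multiplicities $m(v,w)$. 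The goal is to identify traces of either algebra with non-negative functions $c:V\to\mathbb R_{\ge 0}$ satisfying a recursion involving only the data $\{V_n, m\}$.

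First I would restrict a trace $\chi$ to $\mathcal X_n$. Since $\mathcal X_n$ is a finite direct sum of matrix algebras, the classical classification (a version of Corollary \ref{cor_decomposition_of_restriction}) gives $\chi|_{\mathcal X_n} = \sum_{v\in V_n} c(v)\,\mathrm{tr}_v$, with $c(v)\ge 0$ and $\mathrm{tr}_v$ the ordinary matrix trace on the $v$-th component. Next I would extract the compatibility across levels: the Bratteli embedding $i_n:\mathcal X_n \hookrightarrow \mathcal X_{n+1}$ sends a minimal idempotent of the $v$-th block to a sum of rank-one idempotents, with exactly $m(v,w)$ of them lying in the $w$-th block of $\mathcal X_{n+1}$ for each $w$ with $v\prec w$. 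Applying $\chi$ to both sides of this decomposition produces the harmonic recursion
$$
c(v) \;=\; \sum_{w\in V_{n+1}:\; v\prec w} m(v,w)\, c(w),
$$
which refers only to the unlabeled graph structure. Conversely, any non-negative $c$ on $V$ satisfying this recursion defines a trace by the level-$n$ formula $\chi(a)=\sum_{v\in V_n} c(v)\,\mathrm{tr}_v(a_v)$ for $a=\sum_v a_v\in\mathcal X_n$; compatibility across levels is built in by harmonicity, centrality is immediate, and positive-definiteness is inherited componentwise from each $\mathrm{tr}_v$.

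The canonical correspondence between traces of $\mathcal X$ and of $\mathcal Y$ is now transparent: a harmonic $c$ gives rise to the trace $\chi_X$ on $\mathcal X$ via the matrix traces $\mathrm{tr}^X_v$ on $\mathrm{Mat}(l_X(v),l_X(v))$, and to the trace $\chi_Y$ on $\mathcal Y$ via $\mathrm{tr}^Y_v$; since the harmonicity condition depends only on the vertices and edge multiplicities, the cones of non-negative harmonic functions on the two diagrams coincide, yielding the asserted bijection. There is no genuine obstacle in this proof---it is essentially a restatement of the level-by-level description of traces. The only interpretive subtlety worth flagging is that $\chi_X$ and $\chi_Y$ are functionals on very different algebras whose values on analogous rank-one idempotents differ in normalization by $l_X(v)$ vs.\ $l_Y(v)$; what is canonical is the \emph{parameter} $c(v)$ classifying the trace, not a pointwise identity of functionals.
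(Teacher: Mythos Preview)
Your proposal is correct and follows essentially the same approach as the paper: both identify traces with non-negative harmonic functions (coherent systems) on the Bratteli diagram, whose defining recursion involves only the vertices and edge multiplicities, not the dimension labels. The paper's proof is in fact only a sketch pointing to this identification and to references, so your write-up is a faithful and more detailed version of the same argument.
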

\begin{proof}[Sketch of the proof]
 This follows from the identification of  traces with \emph{harmonic functions} or
 \emph{coherent systems} on the Bratelli diagram of the algebra, see \cite{VK_AF}, \cite{VK_Long}, \cite{Kerov_book}
 for more details. The key idea here is that branching of  traces does not depend on labels, for $\GLB$ this can be
seen in Theorem \ref{proposition_branching_of_characters_GLB}.
\end{proof}

In order to state the classification theorem for  traces of $\A({\GLB})$ we need to introduce some
additional notations.

Let $f\in\CY$ be a family of Young diagrams. We call the set
$$
\{x\in\C\mid f(x)\ne \emptyset\}
$$
the support of $f$ and denote it $supp(f)$. If $f$ and $g$ are two families of Young diagrams with
disjoint supports, then $f+g$ stays for the following family:
$$
 (f+g)(x)=\begin{cases} f(x),\, x\in supp(f),\\ g(x),\, x\in supp(g),\\ \emptyset,\text{
 otherwise.}\end{cases}
$$
This operation corresponds to the \emph{parabolic induction} of representations of $\GL{n}$ (see
e.g.\ \cite{Green}, \cite[Chapter III]{Zel} and \cite[Section IV.3]{M})

Let $\Lambda$ be the algebra of symmetric functions in variables $x_1,x_2,\dots$ (see e.g.\
\cite{M} for all the definitions). We intensively use various generators of this algebra, namely,
elementary symmetric functions $e_n$, complete symmetric functions $h_n$ and power sums $p_k$:
$$
 p_k=\sum_k x_i^k.
$$
We also use \emph{Schur symmetric functions} $s_\lambda$, $\lambda\in\Y$ which form a linear basis
in $\Lambda$.

A \emph{specialization} $\Phi$ of $\Lambda$ is an algebra homomorphism:
$$
 \Phi: \Lambda \to \mathbb C.
$$
Note that any specialization of $\Lambda$ is uniquely defined by its values on $p_k$. In what
follows we write the arguments of specializations in square brackets $\Phi[\cdot]$.

Let $\alpha=\{\alpha_i\}$ and $\beta=\{\beta_i\}$, $i=1,2,3\dots$ be two weakly decreasing
sequences of non-negative real numbers such that
\begin{equation}
\label{eq_summable_sequence}
 \sum_{i=1}^{\infty} (\alpha_i+\beta_i) \le \gamma <\infty.
\end{equation}

\begin{definition}
For any two sequences $\alpha$ and $\beta$ of non-negative reals and number $\gamma$ satisfying
\eqref{eq_summable_sequence} we define the specialization $\Sp_{\alpha,\beta,\gamma}$ through its
values on the generators $p_k$ of $\Lambda$
$$
 \Sp_{\alpha,\beta,\gamma}[p_1]=\gamma,\quad \Sp_{\alpha,\beta,\gamma}[p_k]=\sum_i\alpha_i^k+ (-1)^{k-1}\sum_i \beta_i^k.
$$
\end{definition}
\noindent {\bf Remark. } Note that if $\beta_i=0$ and $\sum_i \alpha_i=\gamma$, then the
specialization $\Sp_{\alpha,\beta,\gamma}$ boils down to the substitution of numbers $\alpha_i$ in
place of formal variables $x_i$.

\begin{definition} $\CY'\subset \CY$ is the set of families $f$ of Young diagrams such that
$f(``x-1")=\emptyset$.
\end{definition}

\begin{definition}
 $\Omega(\GLB)$ is defined as the set of triplets $(\alpha,\beta,f)$, where
$\alpha=\{\alpha_i\}$ and $\beta=\{\beta_i\}$, $i=1,2,3\dots$ are two weakly decreasing sequences
of non-negative real numbers satisfying \eqref{eq_summable_sequence} for $\gamma=1$ and
$f\in\CY'$.
\end{definition}
\begin{definition}
For $\omega\in\Omega({\GLB})$ we define a  trace $\chi^{\omega}$ of $\A({\GLB})$ as follows. For
$g\in\GL{n}$ we have
 $
  \chi^{\omega}(I^\GLB_g)=0
 $
 if $n<|f|$, otherwise,
 \begin{equation}
 \label{eq_extreme_GLB}
  \chi^{\omega}(I^\GLB_g)=\sum_{\lambda\in \Y_{n-|f|}} \chi^{f+E_1(\lambda)}(I^\GLB_g) \Sp_{\alpha,\beta,1}[s_\lambda],
 \end{equation}
 where $E_1(\lambda)$ is a function from $\CY_{n-|f|}$ taking value $\lambda$ in $``x-1"$ and taking value
 $\emptyset$ in all other points. $\chi^{f+E_1(\lambda)}$, as and above, stays for the matrix trace of
 the irreducible representation of $\A(\GLB)_n$ ($\GL{n}$) indexed by $f+E_1(\lambda)$.
\end{definition}

\begin{theorem}[Classification theorem for finite traces of $\A(\GLB)$]
\label{theorem_characters_of_GLB}
 The extreme rays of the set of  traces of $\A(\GLB)$ are parameterized by elements of $\Omega(\GLB)$. For
 $\omega=(\alpha,\beta,f)\in \Omega(\GLB)$ the corresponding ray is $\mathbb R_+
 \chi^{\omega}(\cdot)$.
\end{theorem}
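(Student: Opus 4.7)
The plan is to reduce the classification to Thoma's theorem for the infinite symmetric group $S(\infty)$, exploiting the locally semisimple structure of $\A(\GLB)$ (Proposition \ref{proposition_A_as_inductive_limit}), the branching rule (Theorem \ref{proposition_branching_of_characters_GLB}), and the abstract principle from Section \ref{Section_character_classif} that traces of a locally semisimple algebra depend only on the unlabeled Bratteli diagram.

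First I would analyze the connected components of $B(\GLB)$. Since the branching relation $g\prec_{\GLB} f$ only modifies the value at the polynomial ``$x-1$'', two vertices of $B(\GLB)$ lie in the same connected component if and only if they agree on $\C\setminus\{``x-1"\}$. Each component therefore contains a unique minimal vertex $f\in\CY'$ and consists of all $f+E_1(\lambda)$ with $\lambda\in\Y$; as a graded graph it is a copy of the Young graph $\Y$ with grading shifted by $|f|$. Correspondingly $\A(\GLB)$ decomposes as a direct sum $\bigoplus_{f\in\CY'}\A_f$ of two-sided ideals, the cone of traces decomposes as the corresponding Minkowski sum, and hence every extreme ray must be supported on a single ideal $\A_f$.

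Within a single $\A_f$ I would invoke the abstract equivalence of Section \ref{Section_character_classif}: the Bratteli diagram of $\A_f$ (ignoring labels) coincides with the Young graph $\Y$, which is also the Bratteli diagram of $\H(\infty)\simeq\mathbb C[S(\infty)]$ by \cite{VK_Hecke}. This yields a canonical bijection between traces of $\A_f$ and traces of $\H(\infty)$. Applying Thoma's theorem, the extreme traces on the right-hand side are parameterized by non-increasing non-negative sequences $(\alpha,\beta)$ with $\sum_i(\alpha_i+\beta_i)\le 1$, and evaluate at the irreducible $S(m)$-character indexed by $\lambda\in\Y_m$ to a scalar proportional to $\Sp_{\alpha,\beta,1}[s_\lambda]$. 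Transporting this back through the bijection, Corollary \ref{cor_decomposition_of_restriction} and the identification of the irreducibles of $\A(\GLB)_n$ on the ideal $\A_f$ with $\{f+E_1(\lambda):\lambda\in\Y_{n-|f|}\}$ yield precisely the formula \eqref{eq_extreme_GLB}; the vanishing for $n<|f|$ comes from the empty summation range, and the union over $f\in\CY'$ produces the parameter set $\Omega(\GLB)$.

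I expect the main obstacle to be the bookkeeping of normalizations under the change of labels, together with the transfer of positivity. The labels of vertices in $B(\GLB)$ are the $q$-hook dimensions $\dim_q$ from Theorem \ref{theorem_irreps_of_gl}, whereas in the Young graph viewed as the Bratteli diagram of $\H(\infty)$ they are ordinary hook dimensions; the correspondence of traces must therefore be set up in terms of appropriately normalized central projectors so that the coefficients appearing in \eqref{eq_extreme_GLB} are the genuine Thoma values $\Sp_{\alpha,\beta,1}[s_\lambda]$ rather than rescaled versions. In turn, verifying the positivity condition $\chi^\omega(W^*W)\ge 0$ for all $W\in\A(\GLB)$ reduces, through this bijection, to the Edrei--Thoma theorem characterizing the Schur-positive specializations of $\Lambda$ as exactly those of the form $\Sp_{\alpha,\beta,1}$; this is the deepest input of the argument, the rest being structural consequences of the branching rule.
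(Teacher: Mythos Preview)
Your proposal is correct and follows essentially the same route as the paper: decompose the Bratteli diagram of $\A(\GLB)$ into connected components indexed by $f\in\CY'$, identify each component with the Young graph via the branching rule, and then invoke Thoma's theorem. The paper phrases the decomposition in terms of sub-cones $\Upsilon^f$ of the trace cone rather than two-sided ideals $\A_f$, but the content is the same.

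Your worry about normalization is unnecessary. The correspondence between traces and harmonic functions on the Bratteli diagram sends a trace $\chi$ to the function $f\mapsto c(f)$ defined by $\chi\big|_{\A(\GLB)_n}=\sum_f c(f)\chi^f$, and the branching rule of Theorem~\ref{proposition_branching_of_characters_GLB} shows these coefficients satisfy exactly the Young-graph harmonicity relation $c(g)=\sum_{f\succ g}c(f)$, with no dimension factors appearing. The labels $\dim_q$ play no role in this identification (this is precisely the content of the abstract proposition you cite from Section~\ref{Section_character_classif}), so the Thoma parameters $\Sp_{\alpha,\beta,1}[s_\lambda]$ transfer directly without rescaling.
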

\begin{proof}
 For a family $f\in\CY'$ let $\CY^{(f)}\subset\CY$ denote the set of families $h\in\CY$ such that
 $h(u)=f(u)$ for all $u\in\mathcal C\setminus\{``x-1"\}$.

 Moreover, for a family $f\in\CY'$ let $\Upsilon^f$ denote the convex cone of  traces $\chi$ of $\A(\GLB)$ such that
 such that for $n<|f|$ the restriction $\chi\rule[-2.3mm]{.4pt}{4mm}_{\,\A(\GLB)_{n}}$ vanishes and for $n\ge |f|$
 in the decomposition (see Corollary \ref{cor_decomposition_of_restriction})
$$
 \chi \rule[-2.3mm]{.4pt}{4mm}_{\,\A(\GLB)_{n}} =\sum_{h\in\CY_n} c(h) \chi^h(\cdot).
$$
$c(h)=0$ unless $h\in\CY^{(f)}$. Let $\Upsilon^\emptyset$ denote the set $\Upsilon^f$ for $f$
being the empty family. We claim that for any $f\in\CY'$ the convex cone $\Upsilon^f$ is affine
isomorphic to $\Upsilon^\emptyset$. The isomorphism
$$
\Phi^f: \Upsilon^\emptyset \to \Upsilon^f
$$
is given be the following formula. If $\chi\in\Upsilon^\emptyset$ is such that
$$
 \chi \rule[-2.3mm]{.4pt}{4mm}_{\,\A(\GLB)_{n}} =\sum_{h\in\CY_n\bigcap\CY^{(f)} } c(h) \chi^h(\cdot),
$$
then
$$
 \Phi^f(\chi) \rule[-2.3mm]{.4pt}{4mm}_{\,\A(\GLB)_{n+|f|}} =\sum_{h\in\CY_n\bigcap\CY^{(f)} } c(h)
 \chi^{h+f}(\cdot).
$$

Theorem \ref{proposition_branching_of_characters_GLB} implies that the branching of  traces from
$\Upsilon^\emptyset$ with respect to restriction on subalgebras $\A(\GLB)_n$ is the same as
branching of the characters of symmetric groups, cf.\ \cite{Sagan}, \cite{Kerov_book}. Therefore,
$\Upsilon^\emptyset$
 is isomorphic to the set of characters of the infinite symmetric group
$S(\infty)$, see \cite{VK_Long}, \cite{Kerov_book}. The latter characters were classified by Thoma
\cite{Thoma1}, see also \cite{VK_S}. Thoma's theorem implies that the extreme rays of
$\Upsilon^\emptyset$ are parameterized by pairs $\alpha=\{\alpha_i\}$ and $\beta=\{\beta_i\}$,
$i=1,2,3\dots$ of weakly decreasing sequences of non-negative real numbers satisfying
\eqref{eq_summable_sequence} with $\gamma=1$. The ray corresponding to a pair $(\alpha,\beta)$ is
spanned by the character $\chi^{\alpha,\beta}$ such that for $g\in\GL{n}$ we have
 $$
  \chi^{\alpha,\beta}(I^\GLB_g)=\sum_{\lambda\in \Y_n} \chi^{\lambda}(I^\GLB_g) \Sp_{\alpha,\beta,1}[s_\lambda],
 $$

We conclude that for $f\in\CY'$ the extreme rays of $\Upsilon^f$ are parameterized by pairs
$(\alpha,\beta)$ and are given by the formula \eqref{eq_extreme_GLB} for the triplet
$(\alpha,\beta,f)$.

It remains to prove that every extreme ray of the set of  traces of $\A(\GLB)$ is an extreme ray
of one of the sets $\Upsilon^f$. Indeed, let $\chi$ be a  trace of $\A(\GLB)$.
 We claim that there exists a unique decomposition of $\chi$ into the sum
 $$
  \chi=\sum_{f\in\CY'} \chi^{(f)},\quad \chi^{(f)} \in \Upsilon^f
 $$
To prove the claim consider the restrictions $\chi^{(f)}\rule[-2.3mm]{.4pt}{4mm}_{\,\A(\GLB)_{n}}$
for which the existence and uniqueness of such decomposition is immediate. This finishes the
proof.
\end{proof}

\section{Unipotent traces and their values}
\label{Section_unipotent_values}

Recall that an irreducible representation of $\GL{n}$ is said to be \emph{unipotent} (see e.g.
\cite{Steinberg}, \cite{James_unipotent}) if it contains a non-zero $\B_n$-invariant vector. (Here
$\B_n\subset\GL{n}$ is the subgroup of upper-triangular matrices.) In the above parameterization
of irreducible representation of $\GL{n}$ by the families of Young diagrams, unipotent
representations $\pi^f$ are precisely those for which $f(p)=\emptyset$ if $p\ne``x-1"$.

\begin{proposition}
\label{prop_def_unipotent} Let $\chi^{\omega}$, $\omega\in\Omega(\GLB)$ be an indecomposable trace
of $\A(\GLB)$. The following conditions are equivalent:
\begin{enumerate}
\item For every $n$ the restriction of
$\chi^{\omega}$ to $\A(\GLB)_n$ is a linear combination of matrix traces of irreducible unipotent
representations of $\GL{n}$,
\item Restriction of $\chi^{\omega}$ on $\H(\infty)$ is non-zero,
\item Restriction of $\chi^{\omega}$ on $\A(\Uni)$ is non-zero,
\item $\omega=(\alpha,\beta,f)$ with $f\equiv
\emptyset$.
\end{enumerate}
\end{proposition}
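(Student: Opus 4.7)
The plan is to prove $(1)\Leftrightarrow(4)$ directly from the character formula in Theorem \ref{theorem_characters_of_GLB}, and then close the cycle among $(2),(3),(4)$ by exploiting that under the identification $\A(\GLB)_n\simeq\mathbb C(\GL{n})$ the indicator $I_\B$ simultaneously becomes the unit $s_e\in\H(n)$ and the idempotent projector onto $\B_n$-invariants. For $(1)\Leftrightarrow(4)$ I apply the explicit formula: when $n\ge|f|$,
\begin{equation*}
 \chi^\omega\rule[-2.3mm]{.4pt}{5mm}_{\,\A(\GLB)_n}=\sum_{\lambda\in\Y_{n-|f|}}\Sp_{\alpha,\beta,1}[s_\lambda]\,\chi^{f+E_1(\lambda)}.
\end{equation*}
The representation $\pi^{f+E_1(\lambda)}$ is unipotent exactly when its parameter is supported on $\{``x-1"\}$; since $f\in\CY'$ satisfies $f(``x-1")=\emptyset$ this happens iff $f\equiv\emptyset$, giving $(4)\Rightarrow(1)$. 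Conversely, taking $n=|f|$ leaves only the $\lambda=\emptyset$ summand with coefficient $\Sp_{\alpha,\beta,1}[s_\emptyset]=1$, so $\chi^f$ itself appears in the restriction and the unipotency hypothesis forces $f\equiv\emptyset$.

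For the cycle among $(2),(3),(4)$, the structural observation is $I_\B\leftrightarrow\frac{1}{|\B_n|}\sum_{b\in\B_n}e_b$ (the average over $\B_n$, since $\mu_\GLB(\B)=1$), whence $I_\B^2=I_\B$ and $I_\B\,\A(\GLB)\,I_\B=\H(\infty)$ (the $\B$-biinvariant subalgebra). Since $\H(\infty)\subset\A(\Uni)$, $(2)\Rightarrow(3)$ is immediate; conversely, by centrality and idempotency,
\begin{equation*}
 \chi^\omega(aI_\B b)=\chi^\omega((ba)I_\B)=\chi^\omega(I_\B (ba) I_\B),
\end{equation*}
and the last expression lies in $\H(\infty)$, yielding $(3)\Rightarrow(2)$. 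For $(4)\Rightarrow(3)$, a direct computation using Pieri's rule $\sum_{\lambda\vdash n}d_\lambda s_\lambda=p_1^n$ (with $d_\lambda=\dim H(\pi^{E_1(\lambda)})^{\B_n}$, equal to the dimension of the $S(n)$-irrep indexed by $\lambda$) and the normalization $\Sp_{\alpha,\beta,1}[p_1]=1$ gives
\begin{equation*}
 \chi^\omega(I_\B)=\sum_{\lambda\in\Y_n}d_\lambda\,\Sp_{\alpha,\beta,1}[s_\lambda]=\Sp_{\alpha,\beta,1}[p_1^n]=1,
\end{equation*}
and $I_\B\in\A(\Uni)$. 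For $(3)\Rightarrow(4)$, suppose $f\not\equiv\emptyset$; then each $\pi^{f+E_1(\lambda)}$ is non-unipotent, so $H(\pi^{f+E_1(\lambda)})^{\B_n}=0$ and $\pi^{f+E_1(\lambda)}(I_\B)=0$. Hence for every $s\in\H(n)=I_\B\A(\GLB)_n I_\B$ we have $\pi^{f+E_1(\lambda)}(s)=\pi^{f+E_1(\lambda)}(I_\B s I_\B)=0$, so $\chi^{f+E_1(\lambda)}|_{\H(n)}\equiv 0$. Summing against $\Sp[s_\lambda]$ in the character formula yields $\chi^\omega|_{\H(n)}\equiv 0$ for every $n$, and the $(3)\Rightarrow(2)$ argument above then forces $\chi^\omega|_{\A(\Uni)}=0$.

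The main obstacle is the bookkeeping: verifying that under the normalization $e_g\leftrightarrow(q-1)^n q^{n(n-1)/2}I^\GLB_g$ the element $I_\B$ really is the idempotent $\frac{1}{|\B_n|}\sum_{b\in\B_n}e_b$ consistently across all $n$, so that the single identity ``$\pi^h(I_\B)=0$ iff $h$ is non-unipotent'' drives both the vanishing direction $(3)\Rightarrow(4)$ and, via Pieri, the non-vanishing direction $(4)\Rightarrow(3)$. Once this is in place, each remaining step reduces to a one-line manipulation using centrality and the formula from Theorem \ref{theorem_characters_of_GLB}.
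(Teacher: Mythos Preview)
Your proof is correct and follows essentially the same approach as the paper. Both arguments hinge on the same two facts: the explicit character formula from Theorem~\ref{theorem_characters_of_GLB}, and the observation that $I_\B$ corresponds to the idempotent $\frac{1}{|\B_n|}\sum_{b\in\B_n}e_b$, which projects onto $\B_n$-invariants and hence acts as zero in every non-unipotent irreducible representation.

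The organization differs slightly. The paper runs the cycle $(1)\Leftrightarrow(4)$, $(1)\Rightarrow(2)\Rightarrow(3)\Rightarrow(1)$: for $(1)\Rightarrow(2)$ it simply notes that the trace of the projection onto $\B_n$-invariants is positive in a unipotent representation (without computing it), and for $(3)\Rightarrow(1)$ it argues that every element of the ideal $\A(\Uni)$ acts as zero in non-unipotent representations. Your packaging is a bit more explicit in two places: you compute $\chi^\omega(I_\B)=1$ exactly via $\sum_\lambda d_\lambda s_\lambda = p_1^n$ (the paper only needs positivity here), and your centrality manipulation $\chi(aI_\B b)=\chi(I_\B(ba)I_\B)$ gives a clean direct proof of $(3)\Rightarrow(2)$ that bypasses any representation-theoretic reasoning. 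These are pleasant refinements but not a genuinely different route.
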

\begin{proof}
 Equivalence of properties $(1)$ and $(4)$ is a corollary of Theorem
 \ref{theorem_characters_of_GLB}. Next, normalized indicator function of the Borel subgroup
 $\B_n\subset\GL{n}$ is a unit element of $\H(n)$. In the same time in a unipotent representation
 of $\GL{n}$ it acts as a projection on the set of $\B_n$--invariant vectors, while in any other
 representation it acts as zero. Therefore, the value of the matrix trace of a unipotent
 representation of $\GL{n}$ on this indicator function is positive, and $(1)$ implies $(2)$. Since
 $\H(\infty)\subset \A(\Uni)$, the property $(3)$ follows from $(2)$. Finally, since
 $\A(\Uni)$ is spanned by  the indicator function of $\B$ and this indicator function vanishes
 in any non-unipotent representation, every element of $\A(\Uni)$ acts as zero in any
 non-unipotent representation. Therefore, the value of a non-unipotent character of $\GL{n}$ on an
 element of $\A(\Uni)$ is zero and $(1)$ follows from $(3)$.
\end{proof}

\begin{definition} An indecomposable trace of $\A(\GLB)$ satisfying conditions
of Proposition \ref{prop_def_unipotent} is called \emph{unipotent}.
\end{definition}

 In this section we find a number of remarkable properties of unipotent
traces which give a relatively simple procedure for the computation of their values on arbitrary
elements of $\A(\GLB)$. Let us sketch all these properties together first. Since $\A(\GLB)_n$ is
isomorphic to the group algebra of $\GL{n}$, the  traces can be viewed as functions on matrices
from $\GL{n}$ for various $n$. Such function is central, i.e.\ its values depend on the matrix
through its Jordan normal form. One property of these functions is their multiplicativity which
expresses the value on arbitrary Jordan normal forms as product of values on single block Jordan
forms. Another property is a simple relation between values on the Jordan blocks with eigenvalue
$1$ and on Jordan blocks with arbitrary other eigenvalues. Final component is an expression for
the values on the Jordan blocks with eigenvalue $1$ in terms of specializations of \emph{modified
Hall-Littlewood polynomials}.

\smallskip

As for the restriction of unipotent trace on Hecke algebra $\H(\infty)\subset \A(\GLB)$, in this
section we identify them with extreme  traces of $\H(\infty)$ found in \cite{VK_Hecke}, see also
\cite[Section 7]{Me}, which also gives a formula for their values.

\subsection{Values of unipotent traces: formulations}

\begin{theorem}[Multiplicativity theorem for unipotent traces]
 \label{Theorem_multiplicativity_of_characters} Let $\chi^{\omega}$ be an extreme unipotent trace. For $g\in\GL{n}$ let $\chi(g)$ denote the value of the
 restriction of $\chi^{\omega}$ to $\A(\GLB)_n\simeq\mathbb C(\GL{n})$ on the element $e_g\in\mathbb
 C(\GL{n})$. Suppose that $a\in\GL{n}$ and $b\in\GL{m}$  are two matrices with coprime characteristic polynomials, then
 $$
  \chi^{\omega}(a)\chi^{\omega}(b)=\chi^{\omega}(a\odot b),
 $$
 where $a \odot b\in\GL{n+m}$ is the block-diagonal matrix with blocks $a$ and $b$.
\end{theorem}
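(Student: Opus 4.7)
The plan is to reduce the multiplicativity to a character identity for the individual irreducible unipotent characters of $\GL{n+m}$, and then close the argument by exploiting that $\Sp_{\alpha,\beta,1}$ is an algebra homomorphism on Schur functions. Under the identification $\A(\GLB)_n \simeq \mathbb C(\GL n)$ from Proposition~\ref{prop_A_n_as_group_algebra}, the function $\chi(g) := \chi^\omega(e_g)$ is a class function on $\GL n$, and applying Theorem~\ref{theorem_characters_of_GLB} to the unipotent extreme trace $\omega = (\alpha,\beta,\emptyset)$ gives the expansion
\begin{equation*}
\chi(g) \;=\; \sum_{\lambda \in \Y_n} \Sp_{\alpha,\beta,1}[s_\lambda]\,\chi^{E_1(\lambda)}(g),
\end{equation*}
where $\chi^{E_1(\lambda)}(g)$ is the conventional matrix trace of the irreducible unipotent representation $\pi^{E_1(\lambda)}$ of $\GL n$.

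The crux is the identity, valid for $a\in\GL n$, $b\in\GL m$ with coprime characteristic polynomials and any $\lambda\in\Y_{n+m}$:
\begin{equation*}
\chi^{E_1(\lambda)}(a \odot b) \;=\; \sum_{\mu \vdash n,\ \nu \vdash m} c^\lambda_{\mu \nu}\,\chi^{E_1(\mu)}(a)\,\chi^{E_1(\nu)}(b),
\end{equation*}
where $c^\lambda_{\mu\nu}$ are the Littlewood--Richardson coefficients. I prove this via the Jacquet functor. Let $P = P_{n,m}\subset\GL{n+m}$ be the standard parabolic with Levi $M = \GL n\times\GL m$ and unipotent radical $U$, and let $r_{n,m}$ send $\pi$ to its $U$-fixed subspace $\pi^U$ as an $M$-module. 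On one hand, Zelevinsky's identification~\cite{Zel} of the unipotent part of $R = \bigoplus_k R(\GL k)_{\mathbb C}$ with $\Lambda$ via $\pi^{E_1(\lambda)}\leftrightarrow s_\lambda$, under which parabolic restriction corresponds to the coproduct $\Delta s_\lambda = \sum c^\lambda_{\mu\nu}\,s_\mu\otimes s_\nu$, yields
\begin{equation*}
r_{n,m}\bigl(\pi^{E_1(\lambda)}\bigr) \;=\; \bigoplus_{\mu\vdash n,\,\nu\vdash m} c^\lambda_{\mu\nu}\,\pi^{E_1(\mu)}\otimes\pi^{E_1(\nu)}.
\end{equation*}
On the other hand, the general formula $\chi_{\pi^U}(m) = |U|^{-1}\sum_{u\in U}\chi_\pi(mu)$ collapses to $\chi_{\pi^U}(a,b) = \chi_\pi(a\odot b)$ when $m = a\odot b$ has coprime blocks: conjugating $m$ by $u' = \bigl(\begin{smallmatrix}I & X \\ 0 & I\end{smallmatrix}\bigr)\in U$ yields $u'mu'^{-1} = \bigl(\begin{smallmatrix} a & Xb - aX \\ 0 & b\end{smallmatrix}\bigr)$, and coprimality of the characteristic polynomials is precisely the condition making the Sylvester operator $X\mapsto Xb-aX$ on $n\times m$ matrices an isomorphism, so for any $Y$ one can solve $Xb-aX=aY$ and get $u'mu'^{-1} = m\,\bigl(\begin{smallmatrix}I & Y \\ 0 & I\end{smallmatrix}\bigr) = mu$. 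Hence every $mu$ is $P$-conjugate to $m$, the averaging is trivial, and combining the two inputs yields the identity.

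Assembling, substituting the identity into the expansion for $\chi(a\odot b)$, interchanging summation, and using $s_\mu s_\nu = \sum_\lambda c^\lambda_{\mu\nu}s_\lambda$ together with the homomorphism property of $\Sp_{\alpha,\beta,1}\colon\Lambda\to\mathbb C$ gives
\begin{align*}
\chi(a\odot b) &\;=\; \sum_\lambda \Sp[s_\lambda] \sum_{\mu,\nu} c^\lambda_{\mu\nu}\,\chi^{E_1(\mu)}(a)\chi^{E_1(\nu)}(b) \\
&\;=\; \sum_{\mu,\nu} \chi^{E_1(\mu)}(a)\chi^{E_1(\nu)}(b)\,\Sp\bigl[s_\mu s_\nu\bigr] \\
&\;=\; \sum_{\mu,\nu} \Sp[s_\mu]\Sp[s_\nu]\,\chi^{E_1(\mu)}(a)\chi^{E_1(\nu)}(b) \;=\; \chi(a)\,\chi(b),
\end{align*}
which is the claim.

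The principal obstacle is the Hopf-algebraic decomposition of $r_{n,m}(\pi^{E_1(\lambda)})$: it rests on Zelevinsky's nontrivial identification of the unipotent subring of $R(\GL)$ with $\Lambda$, without which even the qualitative shape of the right-hand side of the intermediate identity is not visible. Once that decomposition and the character formula for the Jacquet module are available, the Sylvester-equation collapse of the $U$-averaging is elementary. Conceptually the result is an exact analogue of Thoma's multiplicativity~\eqref{Thoma_multiplicativity} for characters of $S(\infty)$, with coprimality of characteristic polynomials playing the role that disjoint cycle supports do for permutations.
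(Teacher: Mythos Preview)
Your proof is correct. Both your argument and the paper's rest on the same two pillars---Zelevinsky's Hopf-algebra identification of the unipotent sector with $\Lambda$ (Proposition~\ref{proposition_Hopf_summary}) and the fact that $\Sp_{\alpha,\beta,1}$ is a ring homomorphism---but they package the coprimality input dually. The paper works on the \emph{multiplication} side: it introduces the functional $\xi^{\chi^\omega}$ on the Hopf algebra $D$, shows it is multiplicative on all of $D$ (trivially so outside $R^{``x-1"}$ since it vanishes there), and then invokes the identity $Cl_{f_1}\cdot Cl_{f_2}=Cl_{f_1+f_2}$ for disjoint families, equation~\eqref{eq_multiplicativity_of_classes}, to conclude. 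You work on the \emph{comultiplication} side: you expand $\chi$ in irreducible unipotent characters, apply parabolic restriction to $\pi^{E_1(\lambda)}$, and use the Sylvester-equation conjugation to collapse the Jacquet average. Your Sylvester lemma is exactly the content underlying the paper's equation~\eqref{eq_multiplicativity_of_classes}, just seen from the dual side. The paper's framing makes clearer why the unipotent hypothesis is needed (the functional vanishes off $R^{``x-1"}$), whereas your route has the virtue of being fully self-contained on the character-value side and of making the role of coprimality completely explicit.
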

\noindent {\bf Remark.} A very similar multiplicativity property holds for the extreme characters
of the infinite symmetric group $S(\infty)$ (see \cite{Thoma1}) and infinite-dimensional unitary
group $U(\infty)$ (see \cite{Vo}). This seems to be a general infinite-dimensional phenomenon, cf.\ \cite{Vo_multi},
\cite{VK_ring}, \cite{Olsh_Semi}.


\smallskip

The values of the unipotent  traces on various conjugacy classes can be computed in terms of
 specializations of symmetric functions.

Let $P_\mu(x_1,x_2,\dots;q^{-1})$ and $Q_\mu(x_1,x_2,\dots;q^{-1})$  denote the Hall-Littlewood
$P$ and $Q$ polynomials with parameter $q^{-1}$ in variables $x_1,\dots$ labeled by a Young
diagram $\mu$, see \cite[Chapter III]{M}. Let $\mathbb M_q$ denote the endomorphism of the algebra
of symmetric functions
$$
 \mathbb M_q: \Lambda \to \Lambda
$$
given on the Newton power sums $p_k$ by the formula
$$
 p_k\to \frac{1}{1-q^{-k}} p_k, \quad k\ge 0.
$$
Denote
$$
 \widetilde P_\mu= \mathbb M_q P_\mu, \quad \widetilde Q_\mu= \mathbb M_q Q_\mu.
$$
The symmetric functions $\widetilde P_\mu$ and $\widetilde Q_\mu$ are known as \emph{modified
Hall-Littlewood polynomials.}

\begin{theorem}
\label{theorem_character_at_unipotent_class}
 Let $\chi^{\omega}$ be an extreme unipotent trace. For $g\in\GL{n}$ let $\chi(g)$ denote the value of the
 restriction of $\chi^{\omega}$ to $\A(\GLB)_n\simeq\mathbb C(\GL{n})$ on the element $e_g\in\mathbb
 C(\GL{n})$. Suppose that the characteristic polynomial of $g$ is $(x-1)^n$ and the conjugacy
 class (i.e.\ Jordan Normal form of $g$) is encoded by the Young diagram $\lambda$ with $n$ boxes. Then
 $$\chi(g) = q^{n(\lambda)}\Sp_{\alpha,\beta,1}\left[\widetilde Q_\lambda\right].$$
\end{theorem}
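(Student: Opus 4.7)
The plan is to combine three ingredients: the expansion of extreme unipotent traces into irreducible $\GL{n}$--characters given by Theorem \ref{theorem_characters_of_GLB}; the classical Green--Macdonald identity expressing unipotent character values of $\GL{n}(\F)$ in terms of modified Hall--Littlewood polynomials; and the fact that $\Sp_{\alpha,\beta,1}$ is an algebra homomorphism from $\Lambda$ to $\mathbb{C}$.

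First I specialize Theorem \ref{theorem_characters_of_GLB} to an extreme unipotent trace, which by Proposition \ref{prop_def_unipotent} corresponds to a triplet $(\alpha,\beta,f)$ with $f = \emptyset$. Combining this with the isomorphism $e_g \leftrightarrow (q-1)^n q^{n(n-1)/2} I^{\GLB}_g$ from Proposition \ref{prop_A_n_as_group_algebra}, the two normalization factors cancel and one obtains for every $g \in \GL{n}$
$$\chi(g) = \sum_{\lambda \in \Y_n} \chi^{E_1(\lambda)}(g)\, \Sp_{\alpha,\beta,1}[s_\lambda],$$
where on the right $\chi^{E_1(\lambda)}$ denotes the conventional matrix trace of the unipotent irreducible representation $\pi^{E_1(\lambda)}$ of $\GL{n}$ indexed by $\lambda$.

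Next I invoke the classical identity for unipotent character values. For $g \in \GL{n}$ unipotent with Jordan type $\mu$ and $|\mu| = n$, one has the equality in the degree-$n$ part of $\Lambda$
$$\sum_{\lambda \in \Y_n} \chi^{E_1(\lambda)}(g)\, s_\lambda \;=\; q^{n(\mu)}\, \widetilde{Q}_\mu,$$
which says that the transition matrix between the bases $\{s_\lambda\}_{\lambda\in\Y_n}$ and $\{q^{n(\mu)}\widetilde{Q}_\mu\}_{\mu\in\Y_n}$ is precisely the table of unipotent character values. This identity, equivalent to Green's formula for unipotent characters, is recorded in \cite[Chapter IV]{M}. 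Applying the algebra homomorphism $\Sp_{\alpha,\beta,1}$ to both sides and substituting into the formula of the previous paragraph yields $\chi(g) = q^{n(\mu)}\, \Sp_{\alpha,\beta,1}[\widetilde{Q}_\mu]$, which is the desired claim.

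The main obstacle is the second step: stating and applying the Green--Macdonald identity in the correct normalization. Hall--Littlewood polynomials admit several standard variants (the $P$ versus $Q$ scaling, the parameter $t = q^{-1}$ versus $t = q$, modified versus unmodified), and the endomorphism $\mathbb{M}_q$ in the definition of $\widetilde{Q}_\mu$ is designed precisely to absorb the discrepancy between the natural combinatorial scaling of $Q_\mu(\,\cdot\,;q^{-1})$ and the Schur-function expansion of unipotent characters. Tracing these conventions through Macdonald's characteristic map, and reconciling the $q^{n(\mu)}$ prefactor with the Jordan-block conventions used in his Chapter IV, constitutes the bulk of the verification; sanity checks in the cases $n \le 2$ (where the Steinberg and trivial characters can be computed by hand) confirm that the normalizations line up.
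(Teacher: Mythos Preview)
Your proof is correct and follows essentially the same route as the paper: expand the unipotent trace via Theorem~\ref{theorem_characters_of_GLB}, then use the Green--Macdonald formula $\chi^{E_1(\lambda)}(g)=q^{n(\mu)}K^{q^{-1}}_{\lambda,\mu}$ together with the Schur expansion $\widetilde Q_\mu=\sum_\lambda K^{q^{-1}}_{\lambda,\mu}s_\lambda$. The only difference is packaging: you quote the combined identity $\sum_\lambda \chi^{E_1(\lambda)}(g)\,s_\lambda=q^{n(\mu)}\widetilde Q_\mu$ directly from \cite[Chapter~IV]{M}, whereas the paper cites the $q^{-1}$-Kostka formula for $\chi^{E_1(\lambda)}(g)$ and then derives the Schur expansion of $\widetilde Q_\mu$ separately via the Cauchy identities for Schur and Hall--Littlewood polynomials.
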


Now let $Pl_n$ be the  endomorphism of the algebra $\Lambda$ defined through
$$
 Pl_n:\Lambda\to\Lambda,\quad Pl_n(p_i)=p_{ni}
$$
This is a particular case of \emph{plethysm} morphism, see \cite[Section I.8]{M}. Note that $Pl_n$
maps $m_\lambda$ to $m_{n\lambda}$.

Set
$$
 \Sp^n_{\alpha,\beta,1}:=\Sp_{\alpha,\beta,1}\circ Pl_n.
$$
Observe that
$$
 \Sp^n_{\alpha,\beta,1}\left[f\right]=\Sp_{\alpha^n,-(-\beta)^n,1}\left[f\right],
$$
where $\alpha^n=((\alpha_1)^n,(\alpha_2)^n,\dots)$,
$-(-\beta)^n=(-(-\beta_1)^n,-(-\beta_2)^n,\dots)$.

Theorem \ref{theorem_character_at_unipotent_class} can be generalized as follows.

\begin{theorem}
\label{theorem_character_at_simple_class}
 In the settings of Theorem \ref{theorem_character_at_unipotent_class} suppose that $n=km$,
 the characteristic polynomial of $g$ is $u^m$, where $u$ is an irreducible (over $\F$) polynomial of degreee $k$ and the conjugacy
 class (i.e.\ Jordan Normal form of $g$) is given by the Young diagram $\lambda$ with $m$ boxes. Then
 $$\chi(g) = q^{kn(\lambda)}\Sp^k_{\alpha,\beta,1}\left[\widetilde Q_\lambda(\cdot; q^{-k})\right]$$
\end{theorem}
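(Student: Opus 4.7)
The plan is to derive Theorem \ref{theorem_character_at_simple_class} by combining the classification of finite traces (Theorem \ref{theorem_characters_of_GLB}) with Green's explicit character formula for $\GL{n}$ and a plethystic symmetric function identity. Theorem \ref{theorem_character_at_unipotent_class} is recovered as the $k=1$, $u=``x-1''$ case, which one can use as a sanity check throughout.

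First I would apply Theorem \ref{theorem_characters_of_GLB} to the unipotent triple $\omega=(\alpha,\beta,\emptyset)$ to obtain
$$
\chi^\omega(I^\GLB_g) = \sum_{\nu\in\Y_n} \chi^{E_1(\nu)}(I^\GLB_g)\,\Sp_{\alpha,\beta,1}[s_\nu], \qquad n=km,
$$
which reduces the problem to evaluating the conventional matrix traces $\chi^{E_1(\nu)}$ of the irreducible unipotent representations of $\GL{n}$ on the specified element $g$.

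Next I would invoke Green's character formula in the form presented in \cite[Chapter IV, \S6]{M}. For a conjugacy class whose primary decomposition is supported on the single polynomial $u\in\C_k$ with Jordan type $\lambda\vdash m$, Green's formula produces coefficients $c^{(k)}_{\nu,\lambda}$ indexed by $\nu\vdash n$ such that
$$
\chi^{E_1(\nu)}(g) = q^{kn(\lambda)}\, c^{(k)}_{\nu,\lambda},\qquad Pl_k\bigl[\widetilde Q_\lambda(\,\cdot\,;q^{-k})\bigr] = \sum_{\nu\in\Y_n} c^{(k)}_{\nu,\lambda}\, s_\nu.
$$
The factor $q^{kn(\lambda)}$ reflects that eigenvalues of $g$ live in the residue field $\F_{q^k}=\F_q[x]/(u)$ (so every $q$ gets replaced by $q^k$), while the plethysm $Pl_k$, i.e.\ $p_i\mapsto p_{ki}$, encodes the action of the Frobenius of $\F_{q^k}/\F_q$ permuting the $k$ Galois-conjugate eigenvalues of $g$ over $\overline{\F_q}$. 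Substituting these two identities into the previous display and using the definition $\Sp^k_{\alpha,\beta,1}=\Sp_{\alpha,\beta,1}\circ Pl_k$ yields
$$
\chi^\omega(g) \;=\; q^{kn(\lambda)}\,\Sp_{\alpha,\beta,1}\bigl[Pl_k\,\widetilde Q_\lambda(\,\cdot\,;q^{-k})\bigr] \;=\; q^{kn(\lambda)}\,\Sp^k_{\alpha,\beta,1}\bigl[\widetilde Q_\lambda(\,\cdot\,;q^{-k})\bigr],
$$
which is the claim.

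The main technical obstacle is justifying the second identity above: namely that the Schur expansion of the matrix of unipotent character values on primary classes is, up to the explicit scalar $q^{kn(\lambda)}$, the plethysm $Pl_k$ of the modified Hall--Littlewood polynomial $\widetilde Q_\lambda(\,\cdot\,;q^{-k})$. One route is a direct matrix inversion starting from Green's defining expression of the characters in terms of the basic functions, a $q$-analogue of the Frobenius formula for the symmetric group. A more conceptual route, which I would prefer, is to pass through the extension field $\F_{q^k}$: the natural inclusion $\GL{m}(\F_{q^k})\hookrightarrow\GL{km}(\F_q)$ sends the unipotent class of type $\lambda$ over $\F_{q^k}$ precisely to the primary class associated with the pair $(u,\lambda)$ over $\F_q$; applying Theorem \ref{theorem_character_at_unipotent_class} over the larger field then produces exactly the desired formula, with the plethystic shift of specialization parameters $(\alpha,\beta)\mapsto(\alpha^k,-(-\beta)^k)$ matching the identity for $\Sp^k_{\alpha,\beta,1}$ recorded just before the statement of the theorem.
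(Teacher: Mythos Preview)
Your second route is essentially the paper's approach, but the paper executes it at a different layer of the character theory. Rather than working directly with irreducible unipotent characters $\chi^{E_1(\nu)}$ and invoking Green's formula, the paper passes to the basis of induced (flag) characters $\psi^q_\mu$, where $\psi^q_\mu(g)$ counts $g$-fixed flags of shape $\mu$. The key technical lemma (Lemma \ref{lemma_invariant_subspaces}) identifies $g$-invariant $\F_q$-subspaces of $\F_q^{km}$ with $g'_y$-invariant $\F_{q^k}$-subspaces of $\F_{q^k}^m$, which immediately gives $\psi^q_\mu(g)=\psi^{q^k}_\nu(g')$ when $\mu=k\nu$ and $0$ otherwise. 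Since $\chi^\omega=\sum_\mu \Sp_{\alpha,\beta,1}[m_\mu]\,\psi^q_\mu$ and $Pl_k(m_\nu)=m_{k\nu}$, this yields $\chi^\omega(g)=\sum_\nu \Sp^k_{\alpha,\beta,1}[m_\nu]\,\psi^{q^k}_\nu(g')$, and one finishes by converting back to irreducibles over $\F_{q^k}$ and applying Theorem \ref{theorem_character_at_unipotent_class} with $q$ replaced by $q^k$.

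What your outline leaves implicit---and what the paper supplies---is precisely how the $\F_q$-trace $\chi^\omega$ becomes, upon evaluation at $g$, the value of an $\F_{q^k}$-unipotent trace at $g'$. This is not automatic from the inclusion $\GL{m}(\F_{q^k})\hookrightarrow\GL{km}(\F_q)$ alone; one needs a basis of class functions whose behavior under that inclusion is transparent, and the flag characters $\psi^q_\mu$ provide exactly that. Your first route via direct manipulation of Green's formula would also work but is heavier; the paper's choice of basis makes the field-extension argument essentially a one-line computation once the invariant-subspace lemma is in hand.
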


Clearly, combining Theorem \ref{theorem_character_at_simple_class} and Theorem
\ref{Theorem_multiplicativity_of_characters} we get the formula for values of the unipotent
 traces on arbitrary conjugacy classes.

\subsection{Values of unipotent traces: proofs}

To give a proof of Theorem \ref{Theorem_multiplicativity_of_characters} we need some facts about
the Hopf algebra related to the representations of $\GL{n}$. We follow \cite{Zel} and \cite{M}
here.

Let $D_n$ denote the the space of central (i.e.\ class) complex functions on $\GL{n}$. This is a
finite dimensional vector space with basis of the characteristic functions of the conjugacy
classes of $\GL{n}$. The latter are parameterized by the elements of $\CY_n$.

Denote
$$
 D=\oplus_{n\ge 0} D_n.
$$
The vector space $D$ is a Hopf algebra with multiplication and comultiplication given by the
operations of parabolic induction and parabolic restriction, respectively, see e.g.\ \cite[Section
8, Chapter III]{Zel} or  \cite[Section IV]{M}.

For a family $f\in\CY_n$ let $Cl_f\in D_n$ denote the indicator function of the conjugacy class in
$\GL{n}$ corresponding to $f$. The definition of the multiplication in $D$ implies that for
disjoint $f_1,f_2$ we have
\begin{equation}
\label{eq_multiplicativity_of_classes}
 Cl_{f_1} \cdot  Cl_{f_2} = Cl_{f_1+f_2}
\end{equation}

Let $R_n\subset D_n$ denote the $\mathbb Z$--module spanned by the characters of irreducible
representations of $\GL{n}$ and $R=\oplus_{n\ge 0} R_n$. Then $D=R\otimes_{\mathbb Z} \mathbb C$.
$R$ is a Hopf subalgebra of $D$, moreover, $R$ is a \emph{PSH-alebra} in the terminology of
\cite{Zel}.

\begin{proposition}
\label{proposition_Hopf_summary} We have
\begin{equation}
\label{eq_decomposition_into_tensor_product}
 R=\bigotimes\limits_{c\in\mathcal C} R^c,\quad  D=\bigotimes\limits_{c\in\mathcal C} D^c,
\end{equation}
where $\bigotimes$ means the tensor product of Hopf algebras, $R^c$ is $\mathbb Z$--module spanned
by the characters of irreducible representations $\chi^f$ of $\GL{n}$ such that $f(u)=\emptyset$
unless $u=c$. $R^c$ is a Hopf subalgebra of $R$, elements $c\in\mathcal C$ enumerate the so-called
\emph{cuspidal} irreducible representations of $\GL{n}$, which are in bijections with elements of
$\C$ and $D^c=R^c\otimes_{\mathbb Z} \mathbb C$. Each $R^c$ is isomorphic to $\Lambda$, under this
identification $\chi^f\in R^c$ corresponds to $s_{f(c)}$.
\end{proposition}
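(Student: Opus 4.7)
The plan is to invoke the general structure theory of positive self-adjoint Hopf algebras (PSH-algebras) developed by Zelevinsky in \cite{Zel}. The first step is to check that $R=\bigoplus_{n\ge 0} R_n$, with product and coproduct given by parabolic induction and parabolic restriction respectively, forms a PSH-algebra: the distinguished positive basis is $\{\chi^f : f\in\CY\}$, positivity and integrality of the structure constants follow from the fact that parabolic induction sends genuine representations to genuine representations (and similarly for restriction), and self-adjointness is Frobenius reciprocity with respect to the standard inner product on class functions in which irreducible characters are orthonormal. This is exactly the setup treated in \cite[Ch.~III, \S 7]{Zel}.

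Next I would identify the primitive elements. By definition a primitive element of $R$ is one whose parabolic restriction to every proper Levi subgroup vanishes, i.e.\ a \emph{cuspidal} irreducible character. By Green's classification (Theorem \ref{theorem_irreps_of_gl}), an irreducible representation $\pi^f$ is cuspidal if and only if $f$ is concentrated on a single $c\in\C_n$ and $f(c)$ is the one-row diagram of length $1$, so that primitive elements of $R$ are in canonical bijection with $\C=\bigsqcup_d \C_d$. For $c\in\C$ define $R^c$ to be the PSH-subalgebra of $R$ generated (under product and coproduct) by this single primitive; by construction $R^c$ has a positive $\mathbb Z$-basis indexed by some subset of the irreducibles, and equation \eqref{eq_multiplicativity_of_classes} together with the explicit parabolic induction formula shows that this subset consists exactly of those $\chi^f$ with $f$ supported on $\{c\}$.

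Now I would invoke Zelevinsky's fundamental decomposition theorem \cite[Ch.~I, \S 3 and Ch.~III, \S 8]{Zel}: every PSH-algebra decomposes uniquely as a tensor product of Hopf algebras over its set of primitive elements, each factor being the PSH-subalgebra generated by the corresponding primitive. Applied to $R$ this yields the first equality in \eqref{eq_decomposition_into_tensor_product}. The second identification, that each $R^c$ is isomorphic to $\Lambda$ with $\chi^f\leftrightarrow s_{f(c)}$, is Zelevinsky's uniqueness theorem for PSH-algebras with a single primitive element: there is, up to isomorphism, exactly one such PSH-algebra, and it is the algebra of symmetric functions with the distinguished basis of Schur functions. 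Matching the distinguished bases requires checking that the branching rule for cuspidally supported irreducibles under $R^c$-coproduct coincides with the Pieri rule $s_\lambda \mapsto \sum_{\mu\prec\lambda} s_\mu\otimes s_{(1)}$ for Schur functions, which is a direct consequence of Theorem \ref{Theorem_parabolic_restriction} specialized to families concentrated at $c$. Finally, the decomposition of $D$ follows by extending scalars from $\mathbb Z$ to $\mathbb C$, since tensor products commute with extension of scalars and $D^c:=R^c\otimes_{\mathbb Z}\mathbb C$ by definition.

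The substantive content of the proof therefore lies not in any direct computation but in correctly assembling the PSH-algebra framework; the main obstacle, if any, is the verification that the primitives are indeed the cuspidals (rather than some larger set), which uses that parabolic restriction is surjective enough to detect non-cuspidality. Everything else is an application of theorems already proved in \cite[Ch.~I, III]{Zel} and summarized in \cite[Ch.~IV]{M}, so the argument is essentially a citation and translation into the present notation.
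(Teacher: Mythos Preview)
Your proposal is correct and follows exactly the same approach as the paper, which simply cites \cite[9.3]{Zel}; you have merely unpacked that citation into its constituent steps (PSH-algebra axioms, identification of primitives with cuspidals, Zelevinsky's decomposition and uniqueness theorems). One minor caution: your Pieri-rule verification via Theorem~\ref{Theorem_parabolic_restriction} is literally the right computation only for $c$ of degree~$1$; for $c\in\C_d$ with $d>1$ the relevant coproduct step in $R^c$ corresponds to parabolic restriction from $\GL{md}$ to $\GL{(m-1)d}\times\GL{d}$, but this is handled automatically by Zelevinsky's abstract uniqueness theorem and does not require a separate check.
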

\begin{proof} See \cite[9.3]{Zel}. \end{proof}

%

Next we describe how  the  traces of $\A(\GLB)$ are related to algebra $D$. Let $p^\GLB$ denote a
distinguished degree one element of $D$ which is the sum of characters of all $q-1$ irreducible
representations of $\GL{1}$. Note that the support of $p^{\GLB}$ as a function on $\GL{1}$ is the
unit element.

Let $\Xi^\GLB$ denote the convex cone of linear functionals on $D$ satisfying:
$$
 \xi: D\to\mathbb C
$$
\begin{enumerate}
\item $\xi[u\cdot p^\GLB]=\xi[u]$, for every $u\in D$,
\item $\xi[\chi]\ge 0$ for every $n$ and every character $\chi\in D_n$ of an irreducible representation
of $\GL{n}$.
\end{enumerate}

Let $\phi\to \xi^\phi$ denote the map from the set of  traces of $\A(\GLB)$ to $\Xi^\GLB$ given
by:
$$
 \xi^\phi[\chi^f]=c(f),
$$
where $\chi^f\in\ D_n$ is a character of irreducible representation of $\GL{n}$ indexed by
$f\in\CY_n$ and $c(f)$ is the coefficient in the decomposition
$$
 \phi \rule[-2.3mm]{.4pt}{4mm}_{\,\A(\GLB)_{n}}(\cdot) =\sum_{f\in\CY_n} c(f) \chi^f(\cdot).
$$

Comparing the definitions of the set $\Xi^\GLB$ and Proposition \ref{proposition_Hopf_summary}
with Theorem \ref{proposition_branching_of_characters_GLB} we conclude that the map $\phi\to
\xi^\phi$ gives a bijection between the set of  traces of $\A(\GLB)$ and $\Xi^\GLB$.

Moreover, note that for $g\in\GL{n}$ belonging to a conjugacy class $f$ we have (as follows from
the definitions)
\begin{equation}
\label{eq_value_of_character_through_functional}
 \phi(e_g)=\xi^{\phi}[Cl_f]
\end{equation}
As and above we use the identification $\A(\GLB)_n\simeq \mathbb C(\GL{n})$ here.

Now we can prove Theorem \ref{Theorem_multiplicativity_of_characters}.
\begin{proof}[Proof of Theorem \ref{Theorem_multiplicativity_of_characters}]

Let $\widetilde\chi$ (which is $\chi^\omega$ for some $\omega\in\Omega(\GLB)$) be a unipotent
extreme  trace of $A(\GLB)$. For $b\in R^{``x-1"}$ the values of $\xi^{\widetilde\chi}[b]$ were
computed in Theorem \ref{theorem_characters_of_GLB}. In particular, since specializations
$\Sp_{\alpha,\beta,\gamma}$ are algebra homomorphisms by their definition and $R^{``x-1''}\simeq
\Lambda$, we have for $b_1,b_2\in R^{``x-1"}$
\begin{equation}
 \label{eq_multiplicativity}
 \xi^{\widetilde\chi}[b_1 b_2]=\xi^{\widetilde\chi}[b_1] \xi^{\widetilde\chi}[b_2].
\end{equation}
 We claim that \eqref{eq_decomposition_into_tensor_product} actually holds for general $b_1, b_2 \in
 D$. To prove this claim note that both sides of \eqref{eq_multiplicativity} are bilinear.
 Therefore, it enough to check this property for $b_1,b_2$ belonging to some linear basis of $D$.
 Let us choose the basis $b^f$ enumerated by $f\in \CY$ and given by
 $$
  b^f:=\chi^f = \prod_{c\in\mathcal C} \chi^{f_c},
 $$
 where $\chi^{f_c}\in R^c$ is the character of the irreducible representation of $\GL{|f(c)|}$
 corresponding to the family $f_c$ defined as a unique family such that $f_c(c)=f(c)$ and
 $f_c(u)=\emptyset$ for $u\ne c$; $\chi^f$ as and above is the corresponding  character  of the irreducible representation
 of $\GL{|f|}$. It remains to observe that by the definition of the unipotent
 character
 $$
  \xi^{\widetilde\chi}[b^f]=0
 $$
 unless $f(u)=0$ for any $u\ne``x-1"$.

\smallskip

Now suppose that $a\in\GL{n}$ and $b\in\GL{m}$  are two matrices with coprime characteristic
polynomials belonging to conjugacy classes parameterized by $f\in\CY_{n}$ and $h\in\CY_{m}$,
respectively. Then the families $f$ and $h$ are disjoint, furthermore, $a\circ b$ belongs to the
conjugacy class parameterized by $f+h$. Therefore, using \eqref{eq_multiplicativity},
\eqref{eq_multiplicativity_of_classes} and \eqref{eq_value_of_character_through_functional} we
obtain
$$
 \chi^{\omega}(a\circ b)=\xi^{\chi^\omega}[Cl_{f+h}]=\xi^{\chi^\omega}[Cl_{f}\cdot Cl_h]=
 \xi^{\chi^\omega}[Cl_{f}]\xi^{\chi^\omega}[Cl_h]=\chi^{\omega}(a)\chi^{\omega}(b). \qedhere
$$
\end{proof}

\begin{proof}[Proof of Theorem \ref{theorem_character_at_unipotent_class}]

If $\chi^\mu$ is the character of the unipotent representation of $\GL{n}$ indexed by $\mu$, then
as follows e.g.\ from the results of \cite[Chapter IV]{M}
$$
 \chi^\mu(g)=q^{n(\lambda)} K_{\mu,\lambda}^{q^{-1}},
$$
where $K_{\mu,\lambda}^{q^{-1}}$ is the $q^{-1}$ Kostka number defined as the coefficient in the
decomposition of Schur polynomials into the sum of Hall-Littlewood polynomials
\begin{equation}
\label{eq_qKostka}
 s_\mu(x_1,x_2,\dots)=\sum_\lambda K_{\mu,\lambda}^{q^{-1}} P_\lambda(x_1,x_2,\dots;q^{-1}).
\end{equation}
Therefore,
\begin{equation}
\label{eq_characters_through_Kostka}
 \chi^\omega(g)=q^{n(\lambda)}\sum_\mu K_{\mu,\lambda}^{q^{-1}} \Sp_{\alpha,\beta,1}[s_\lambda].
\end{equation}
It remains to prove that
\begin{equation}
\label{eq_Q_mod_into_Schur}
 \widehat Q_\lambda= \sum_\mu K_{\mu,\lambda}^{q^{-1}} s_\mu.
\end{equation}
Indeed, the Cauchy  identity for Hall-littlewood  polynomials (see \cite[Section III.4]{M}) yields
\begin{equation}
\label{eq_HL_Cauchy}
 \sum_{\nu\in\Y} P_\nu(x_1,x_2,\dots;q^{-1}) Q_\nu(y_1,y_2,\dots;q^{-1})u = \prod_{i,j}
 (1-q^{-1}x_i y_j)(1-x_i y_j)
\end{equation}
Applying the map $M_q$ with respect to the variables $y_1,y_2,\dots$ in the identity
\eqref{eq_HL_Cauchy} we arrive at
\begin{equation}
\label{eq_HL_Cauchy_modified}
 \sum_{\nu\in\Y} P_\nu(x_1,x_2,\dots;q^{-1}) \widetilde Q_\nu(y_1,y_2,\dots;q^{-1})u = \prod_{i,j}
 (1-x_i y_j)^{-1}
\end{equation}
In the same time the Cauchy Identity for Schur Polynomials (see \cite[Section I.4]{M}) yields
\begin{equation}
\label{eq_Schur_Cauchy}
 \sum_{\nu\in\Y} s_\nu(x_1,x_2,\dots;q^{-1}) s_\nu(y_1,y_2,\dots;q^{-1})u = \prod_{i,j}
 (1-x_i y_j)^{-1}
\end{equation}
Combining \eqref{eq_HL_Cauchy_modified}, \eqref{eq_Schur_Cauchy} and \eqref{eq_qKostka} we arrive
at \eqref{eq_Q_mod_into_Schur}.
\end{proof}

We need some preparations to prove Theorem \ref{theorem_character_at_simple_class}. In order to
connect the values of unipotent  traces on unipotent and on more general conjugacy classes it is
convenient to work not with irreducible representations of $\GL{n}$, but with representations
induced from the parabolic subgroups.

More precisely, let $\mu$ be a Young diagram with $n$ boxes and let $fl_\mu$ denote the set of all
flags of subspaces
$$
 V_1\subset V_2\subset\dots\subset V_r
$$
of $n$-dimensional vector space $\F^n$, such that $\dim V_k=\mu_1+\dots+\mu_k$. Here $r$ is the
number of nonempty rows in $\mu$. $\GL{n}$ naturally acts in $fl_\mu$. Let $\psi^q_\mu$ denote the
character of the corresponding representation of $\GL{n}$ in $\mathbb{C}( fl_\mu)$. Clearly,
$\psi_\mu(g)$ is equal to the number of flags in $fl_\mu$ which $g$ fixes.

First, we claim that if the conjugacy class of $g$ is given by a family $f\in\CY_n$ such that
$f(p)=\emptyset$ for all but one linear polynomial $p$, then the value of $\psi_\mu(g)$ does not
depend on this $p$ (but, of course, depends on the Young diagram $f(p)$). Indeed, if $g_1$ and
$g_2$ are two such matrices, then one can be obtained from another by conjugation and addition of
a scalar matrix. Conjugation does not change the character. The addition of a scalar matrix does
not change the set of invariant subspaces of an operator, thus, also does not change the
character.

Next suppose that $n=mk$ and let $p(x)$ be an irreducible polynomial of degree $k$. Suppose that
the conjugacy class of $g\in\GL{n}$ is given by a family $f$, $f(\cdot)$ is equal to empty set
everywhere except at $p$ and $f(p)=\lambda$. This implies that $|\lambda|=m$. Suppose also that
$g'_y\in\mathbb{GL}(m,q^k)$ (note that the number of elements in the field changed!) is in a
conjugacy class $f'$ such that $f(p')=\lambda$ for linear polynomial $p'(x)="x-y"$ (here
$y\in\mathbb F_{q^k}^*$). Our next aim is to prove that
\begin{equation}
\label{eq_induced_character_formula}
\psi_\mu^q(g)=\begin{cases} \psi_\nu^{q^k}(g'_y),\text{ if } \mu_i= k\nu_i\text{ for all }i,\\
 0,\text{ otherwise.}\end{cases}
\end{equation}
Note that (as we have shown above) the right side of \eqref{eq_induced_character_formula},
actually, does not depend on $y$.

Conjugating the matrix, if necessary, we can assume that $g$ is made out of $k\times k$ blocks. On
the main diagonal all the blocks are $Mat(p)$, where
$$
 Mat(p)=\begin{pmatrix} 0& 0 & 0 & \dots & -p_0 \\ 1& 0 & 0 &  \dots &-p_1 \\ & & \dots & &\\  0 &\dots& 1 & 0 & -p_{k-2}\\ 0&   \dots&0&1& -p_{k-1}\end{pmatrix}
$$
is the companion matrix of the polynomial
$$p(x)=p_0+p_1 x+\dots +p_{k-1} x^{k-1} + x^k.$$
Below the diagonal all blocks are zero. Directly above the main diagonal blocks are either
$k\times k$ identity matrices or zeros. Identity matrices form (diagonal) groups of lengths (from
top to bottom) $\lambda_1$, $\lambda_2$, \dots.

The matrix $g'$ can be assumed to have a similar structure (but without any blocks), e.g.\
$$
 g'=\begin{pmatrix} y& 1& 0\\0& y& 1 &0 \\0& 0& y& 0& 0\\0& 0 & 0 & y & 1 & 0\\ & & \dots \\ 0&\dots & & & &0& y\end{pmatrix}
$$
More formally, $g'$ has $y$s on the main diagonal, zeros everywhere below the main diagonal and
above the second (i.e.\ the one on top of the main diagonal) and $1$s in the second diagonal
forming groups divided by zeros. In the above example the length of the first group, which is
$\lambda_1$, equals $2$.

Now let us view the $n$-dimensional space $\F^n$ as $(\F^k)^m$. Let us identify $\F^k$ with
$\mathbb F_{q^k}$. The main step in proving \eqref{eq_induced_character_formula} is the following
lemma.
\begin{lemma}
 \label{lemma_invariant_subspaces} There exists $y\in\mathbb F_{q^k}$ such that if
  $V$ is a $\F$-linear subspace of $(\F^k)^m$ invariant under $g$, then $V$ is a $\mathbb
  F_{q^k}$-linear subspace invariant under $g'_y$ and vice versa.
\end{lemma}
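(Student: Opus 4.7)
The plan is to choose $y$ to be any root of $p(x)$ in $\mathbb F_{q^k}$; such a root exists because $p$ is irreducible of degree $k$ over $\F$, and then $y$ generates $\mathbb F_{q^k}$ over $\F$. Equivalently, identify $\mathbb F_{q^k}$ with $\F[x]/(p(x))$ with $y$ corresponding to the class of $x$, and identify $\F^k$ with $\mathbb F_{q^k}$ via the $\F$-basis $\{1,y,y^2,\dots,y^{k-1}\}$. Under this identification, multiplication by $y$ on $\mathbb F_{q^k}$ is represented in the above basis by precisely the companion matrix $Mat(p)$. Consequently $(\F^k)^m$ becomes $\mathbb F_{q^k}^m$ as an $\F$-space, with an additional $\mathbb F_{q^k}$-module structure; and the $\F$-linear operator $g$ on $(\F^k)^m$ coincides with the $\F$-linear operator underlying $g'_y$ on $\mathbb F_{q^k}^m$ (the diagonal $Mat(p)$ blocks become the diagonal scalars $y$, and the off-diagonal identity blocks become the appropriate $1$'s above the diagonal, matching the pattern prescribed by $\lambda$).

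The next step is to decompose $g=D+N$, where $D$ is the block-diagonal operator with blocks $Mat(p)$ and $N$ is the $0/1$ strictly-upper-triangular part. A direct check shows $[D,N]=0$: each identity block above the diagonal commutes with multiplication by $x$ in $\F[x]/(p(x))$. Since $p$ is separable (as $\F$ is perfect) and $p(D)=0$, the operator $D$ is semisimple over $\F$; thus $g=D+N$ is the Jordan--Chevalley decomposition of $g$ over $\F$. By the uniqueness part of Jordan--Chevalley, both $D$ and $N$ lie in $\F[g]$; in particular, every $\F$-linear subspace $V\subset(\F^k)^m$ that is $g$-invariant is automatically $D$-invariant and $N$-invariant.

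The lemma then follows from the equivalence: $V$ is $D$-invariant iff $V$ is closed under componentwise multiplication by $y$, iff $V$ is closed under multiplication by every $\F$-polynomial in $y$, iff $V$ is an $\mathbb F_{q^k}$-submodule (using that $\F[y]=\mathbb F_{q^k}$ by irreducibility of $p$). For the forward direction, if $V$ is $\F$-linear and $g$-invariant then $V$ is $D$-invariant, hence $\mathbb F_{q^k}$-linear; and since $g=g'_y$ as $\F$-linear operators, $V$ is $g'_y$-invariant. For the converse, if $V$ is $\mathbb F_{q^k}$-linear and $g'_y$-invariant, then $V$ is automatically $\F$-linear (as $\F\subset\mathbb F_{q^k}$) and, using the same identification of $g'_y$ with $g$, it is $g$-invariant.

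The main delicate point is the invocation of the Jordan--Chevalley decomposition over the non-algebraically-closed field $\F$; the key input is perfectness of $\F$ together with separability of $p$, which guarantees that the semisimple part $D$ is again defined over $\F$ and is a polynomial in $g$. All other steps are direct verifications of how the chosen basis intertwines the companion matrix with scalar multiplication on $\mathbb F_{q^k}$.
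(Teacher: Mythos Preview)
Your proof is correct. The approach differs from the paper's in the key step of showing that a $g$-invariant $\F$-subspace $V$ is automatically invariant under the block-diagonal part $D$ (which the paper calls $h$). You invoke the Jordan--Chevalley decomposition over the perfect field $\F$: since $D$ is semisimple, $N$ is nilpotent, and $[D,N]=0$, uniqueness forces $D\in\F[g]$, whence $D$-invariance follows immediately. The paper instead proves this by hand: it chooses $Q=q^t$ a large power of the characteristic, uses the freshman's dream $(g-h)^Q=g^Q+(-h)^Q$ together with nilpotence of $g-h$ to get $h^Q$-invariance of $V$, and then picks $t$ so that $Mat(p)^Q=Mat(p)$ (using $Mat(p)^{q^k}=Mat(p)$) to conclude $h^Q=h$.

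Your route is more conceptual and slightly shorter, at the cost of importing a nontrivial structure theorem; the paper's argument is fully elementary and is in effect a direct verification of the relevant instance of Jordan--Chevalley via the Frobenius. Both arguments identify $\mathbb F_{q^k}$ with $\F[x]/(p)$ and observe that $g$ and $g'_y$ are the same $\F$-linear map under the basis $\{1,y,\dots,y^{k-1}\}$, and both dispose of the converse direction as immediate.
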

\begin{proof}
 Let $h$ be $n\times n$ matrix made out of $m$ $k\times k$ blocks on the diagonal, each block
 is $Mat(p)$. If we set $Q=q^t$ with large enough $t$, then $(g-h)^Q=0$. Note that the matrices
 $g$ and $h$ commute, therefore $(g-h)^Q=g^Q+(-h)^Q$. We conclude that $V$ is invariant under
 $h^Q$.

 Now let us consider the field $\F[x]/p(x)\simeq\mathbb F_{q^k}$. Note that $Mat(p)$ is the matrix of
 the operator of multiplication by $x$ in the basis $1,x,\dots,x^{k-1}$. Since $x$ now can be
 viewed as an element of the field with $q^k$ elements, we conclude that $Mat(p)^{q^k}=Mat(p)$.
 Therefore, it is possible to choose large $t$ so that $Mat(p)^Q=Mat(p)$. Thus, $h^Q=h$ and $V$ is
 invariant under $h$. Any element of $\F[x]/p\simeq\mathbb F_{q^k}$ is a polynomial in $x$,
 therefore, $V$ is invariant under the multiplicative group of $\mathbb F_{q^k}$. In other words,
 $V$ is a $\mathbb F_{q^k}$-linear subspace. Now identifying $x\in\F[x]/p(x)$ with $y\in\mathbb
 F_{q^k}$ we see that $V$ is invariant under the $g'_y$.

 In the reverse direction the statement is immediate.
\end{proof}

Now \eqref{eq_induced_character_formula} becomes straightforward. Indeed, the left side of
\eqref{eq_induced_character_formula} equals the number of $g$-invariant flags. If $k$ does not
divide $\mu_i$ for some $i$, then Lemma \ref{lemma_invariant_subspaces} implies that there are
simply no invariant flags. And if $\mu_i=k\nu_i$ for all $k$, then the flags from $fl_\mu$ are
identified with flags from $fl_\nu$ over bigger field $\mathbb F_{q^k}$ and we arrive at the right
side of \eqref{eq_induced_character_formula}.

\begin{proof}[Proof of Theorem \ref{theorem_character_at_simple_class}]

Now we can deduce the formula for the values of the unipotent traces. Decompose $\chi^\omega$ into
the sum of the characters $\psi_\mu^q$:
$$
 \chi^\omega=\sum_{\mu\in \Y_n} c_\mu \psi_\mu^q
$$
In order to calculate the coefficients $c_\mu$ we recall the decomposition of the characters of
the irreducible unipotent representations of $\GL{n}$ into the sum of $\psi_\mu^q$. We have
\begin{equation}
\label{eq_Kostka}
 \chi^\lambda=\sum_\mu K_{\lambda,\mu} \psi_\mu^q,
\end{equation}
where the coefficients $K_{\lambda,\mu}$ are Kostka numbers and do not depend on $q$ (see
\cite{Steinberg} and also \cite[Section I.6]{M} and references therein). They can be defined via
the relations in the algebra of symmetric functions $\Lambda$:
$$
 m_\mu=\sum_\lambda K_{\lambda,\mu} s_\lambda
$$
where $m_\mu$ is the monomial symmetric function indexed by $\mu$. We have
\begin{multline*}
 \chi^\omega=\sum_{\lambda\in \Y_n} \chi^\lambda \Sp_{\alpha,\beta,1}[s_\lambda] =
 \sum_{\mu\in \Y_n} \psi_\mu^q \left(\sum_{\lambda\in\Y_n} K_{\lambda,\mu} \Sp_{\alpha,\beta,1}[s_\lambda]\right)\\=\sum_{\mu\in \Y_n} \Sp_{\alpha,\beta,1}[m_\mu]
 \psi_\mu^q
\end{multline*}
Evaluating in $g$ and using \eqref{eq_induced_character_formula} we get
$$
 \chi^\omega(g)=\sum_{\nu\in \Y_m} \Sp_{\alpha,\beta,1} [m_{k\nu}] \psi_\nu^{q^k}(g')=
 \sum_{\nu\in \Y_m} \Sp^k_{\alpha,\beta,1} [m_{\nu}] \psi_\nu^{q^k}(g')
$$
Converting back into the sum of irreducible unipotent characters we get
$$
\chi^\omega(g)=\sum_{\lambda\in \Y_m} \Sp^k_{\alpha,\beta,1} [s_\lambda] \chi^\lambda(g').
$$
Now the application of Theorem \ref{theorem_character_at_unipotent_class} (with $q$ replaced by
$q^k$) completes the proof.
\end{proof}

\subsection{Restriction of unipotent traces to Iwahori--Hecke algebra}

In this section we explain what happens when one restricts unipotent trace of $\A(\GLB)$ on
$\H(\infty)$.

First, we recall a classical theorem relating representations of $\H(n)$ and $\GL{n}$.

\begin{proposition}
\label{prop_rep_GL_H} Both irreducible representations of $\H(n)$ and unipotent irreducible
representations of $\GL{n}$ are parameterized by the set $\Y_n$ of Young diagrams with $n$ boxes.
The representation of $\H(n)$ indexed by $\lambda$ coincides with the restriction of the
representation of $\GL{n}$ indexed by $\lambda$ on the set of $\B_n$--invariant vectors and on the
subalgebra $\H(n)\subset \mathbb C(\GL{n})$. In particular the restriction of the conventional
character (matrix trace) of the irreducible unipotent representation of $\GL{n}$ on $\H(n)$ is the
character of the corresponding irreducible representation of $\H(n)$.
\end{proposition}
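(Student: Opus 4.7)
The plan is to build everything from the identification
$$\H(n) \;\simeq\; \mathrm{End}_{\GL{n}}\bigl(\mathbb C[\GL{n}/\B_n]\bigr),$$
i.e.\ to realize the Iwahori--Hecke algebra as the convolution algebra of $\B_n$-biinvariant functions acting on the right on the induced representation $M := \mathrm{Ind}_{\B_n}^{\GL{n}}(\mathbf 1)$. First I would observe that the unit $s_e = \frac{1}{|\B_n|}\sum_{g\in\B_n} e_g$ of $\H(n)$ is precisely the projector onto $\B_n$-invariants in any representation of $\mathbb C(\GL{n})$: for an irreducible $\pi^\mu$ of $\GL{n}$, the operator $\pi^\mu(s_e)$ is the averaging over $\B_n$, whose image is $(V^\mu)^{\B_n}$. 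By Frobenius reciprocity, $V^\mu$ occurs in $M$ with multiplicity $\dim(V^\mu)^{\B_n}$, and this is nonzero exactly when $\pi^\mu$ is unipotent (by the definition of unipotent reproduced above).

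Next I would apply the double commutant theorem to the semisimple bimodule $M$: writing $M \simeq \bigoplus_{\lambda}\, V^\lambda \otimes W^\lambda$ as a $(\GL{n}, \H(n))$-bimodule (the sum over unipotent irreducibles), the $W^\lambda$ are pairwise non-isomorphic irreducible $\H(n)$-modules, and every irreducible $\H(n)$-module is obtained this way because $\H(n) = \mathrm{End}_{\GL{n}}(M)$ is semisimple and $M$ is its defining module. This yields $W^\lambda \simeq (V^\lambda)^{\B_n}$ as $\H(n)$-modules, giving the claimed coincidence of the Hecke representation with the restriction of $\pi^\lambda$ to $\H(n)$ on $\B_n$-invariants. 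The parameterization claim then reduces to checking that this natural bijection $\{$unipotent irreducibles of $\GL{n}\} \leftrightarrow \{$irreducibles of $\H(n)\}$ agrees with the standard labeling of both sides by $\Y_n$, which can be done by matching dimensions via the $q$-hook formula of Theorem \ref{theorem_irreps_of_gl} on one side and the known formula for Hecke dimensions on the other, or, more cleanly, by matching the branching rules under the inclusions $\H(n-1)\hookrightarrow\H(n)$ and the parabolic inclusions in $\GL{n}$ (Theorem \ref{Theorem_parabolic_restriction} combined with Theorem \ref{proposition_branching_of_characters_GLB}).

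For the character statement, fix $h\in\H(n)$ and consider its trace on the irreducible unipotent representation $V^\lambda$. Since $s_e$ is the unit of $\H(n)$, we have $h = s_e\, h\, s_e$ inside $\mathbb C(\GL{n})$, and hence
$$\mathrm{tr}_{V^\lambda}\bigl(\pi^\lambda(h)\bigr) \;=\; \mathrm{tr}_{V^\lambda}\bigl(\pi^\lambda(s_e)\,\pi^\lambda(h)\,\pi^\lambda(s_e)\bigr) \;=\; \mathrm{tr}_{(V^\lambda)^{\B_n}}\bigl(\pi^\lambda(h)\bigr),$$
which is exactly the character of $W^\lambda$ at $h$. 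This takes care of the character identity.

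The main obstacle is the identification of the natural bijection from double commutant with the conventional combinatorial labeling by $\Y_n$ on both sides; the other steps are standard semisimple algebra manipulations once one has $\H(n) = \mathrm{End}_{\GL{n}}(M)$. Since this labeling compatibility is classical (it is the content of Iwahori's original work together with Steinberg's description of unipotent characters), I would cite \cite{I}, \cite{Steinberg} and \cite[Chapter IV]{M} for it rather than reproduce the combinatorial matching in detail.
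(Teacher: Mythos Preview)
Your proposal is correct and is the standard double-commutant argument for this classical fact. The paper itself does not give a proof but simply refers the reader to \cite{CF}; your write-up essentially reconstructs the content of that reference, so there is nothing to compare.
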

\begin{proof} See e.g.\
\cite{CF}.
\end{proof}

The  traces of infinite Hecke algebra $\H(\infty)$ were first classified in \cite{VK_Hecke},
recently they were also studied in \cite{Me}. From these articles the following result is known.
\begin{proposition}
\label{prop_characters_H}
 Extreme  traces of $\H(\infty)$ normalized by the condition $\chi(s_e)=1$ are enumerated
 by sequences $\alpha=\{\alpha_i\}$, $\beta=\{\beta_i\}$ satisfying
 $$
  \alpha_1\ge\alpha_2\ge\dots\ge 0,\quad \beta_1\ge\beta_2\ge\dots\ge 0,\quad
  \sum_i(\alpha_i+\beta_i)\le 1.
 $$
 The decomposition of the restriction of  trace $\chi^{\alpha,\beta}$ on $\H(n)$ into
  traces $\chi^\lambda$ of irreducible representations of $\H(n)$ is given by the following
 formula:
$$
 \chi^{\alpha,\beta}\rule[-2.3mm]{.4pt}{4mm}_{\H(n)}=\sum_{\lambda\in\Y_n} \Sp_{\alpha,\beta,1}
 [s_\lambda] \chi^{\lambda}
$$
\end{proposition}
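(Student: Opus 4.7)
The plan is to deduce the classification from Thoma's theorem on the characters of the infinite symmetric group $S(\infty)$ by identifying the Bratteli diagram of $\H(\infty)$ with the Young graph $\Y$ labeled by the ordinary dimensions $\dim\lambda$ of $S_n$--representations. Once this identification is in place, the Vershik--Kerov correspondence between finite traces of a locally semisimple $*$--algebra and non--negative harmonic functions on its labeled Bratteli diagram produces a canonical bijection between normalized traces of $\H(\infty)$ and normalized traces of $\mathbb C(S(\infty))$, so both the parameterization and the restriction formula transfer directly.

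The first step is to work out the Bratteli diagram of $\H(\infty)$. Each $\H(n)$ is a semisimple complex algebra whose irreducible representations $\pi^\lambda$ are parameterized by $\lambda\in\Y_n$ and whose dimensions coincide with the ordinary dimensions $\dim\lambda$; this follows from the Tits deformation theorem applied to the generic Hecke algebra, or from explicit constructions (seminormal form via Jucys--Murphy elements, Ocneanu's Markov trace, etc.). The embedding $\H(n)\hookrightarrow\H(n+1)$ given above, generated by $s_{(1,2)},\dots,s_{(n-1,n)}$, implements the Young branching $\pi^\lambda\rule[-2.3mm]{.4pt}{4mm}_{\H(n)}=\bigoplus_{\mu\prec\lambda}\pi^\mu$ where $\mu\prec\lambda$ means $\lambda/\mu$ is a single box. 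So the Bratteli diagram of $\H(\infty)$ is precisely the labeled Young graph, identical to that of $\mathbb C(S(\infty))$.

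Second, I would invoke the general correspondence \cite{VK_AF,VK_Long,Kerov_book}: for a locally semisimple $*$--algebra $A=\varinjlim A_n$ with unit, positive normalized central functionals on $A$ are in bijection with non--negative harmonic functions on its labeled Bratteli diagram normalized at the root vertex $\emptyset$, and extreme traces correspond to extreme harmonic functions. Under this bijection, if $\phi$ is the harmonic function attached to $\chi$, the restriction to level $n$ decomposes as $\chi\rule[-2.3mm]{.4pt}{4mm}_{A_n}=\sum_{\lambda}\phi(\lambda)\chi^\lambda$. Because the Bratteli diagrams of $\H(\infty)$ and $\mathbb C(S(\infty))$ agree as labeled graphs, Thoma's classification \cite{Thoma1,VK_S} (or the ring--theoretic asymptotic approach of Vershik--Kerov \cite{VK_ring}) identifies the extreme harmonic functions as $\phi^{\alpha,\beta}(\lambda)=\Sp_{\alpha,\beta,1}[s_\lambda]$ indexed by pairs $(\alpha,\beta)$ satisfying the stated inequalities. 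Transporting this back yields the proposition.

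The hard part is verifying compatibility of the $*$--structures. The $*$ inherited by $\H(n)$ from $\mathbb C(\GL{n})$ makes each $s_{(i,i+1)}$ self--adjoint (the double coset $\B_n(i,i+1)\B_n$ is closed under inversion), whereas on $\mathbb C(S(n))$ the transpositions are self--adjoint under the standard $*$--structure. One must check that the corresponding positive cones match under the abstract algebra isomorphism $\H(n)\simeq\mathbb C(S(n))$, so that positivity of $\chi$ on $\H(\infty)$ translates to positivity of the resulting harmonic function on $\Y$. This is most cleanly done by exhibiting Jucys--Murphy elements in $\H(n)$ that are self--adjoint with real spectrum and that generate a maximal commutative subalgebra compatible with the tower, which together with semisimplicity forces the decomposition $\chi\rule[-2.3mm]{.4pt}{4mm}_{\H(n)}=\sum_\lambda c_\lambda\chi^\lambda$ to have $c_\lambda\ge 0$; once this is settled, the rest of the proof is a transcription of the classical argument for $S(\infty)$.
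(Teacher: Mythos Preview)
The paper does not supply a proof of this proposition; it is quoted as a known result from \cite{VK_Hecke} and \cite{Me}. Your sketch is correct and follows exactly the strategy of those references: identify the Bratteli diagram of $\H(\infty)$ with the Young graph (via the Tits deformation theorem or an explicit seminormal form), then transport Thoma's classification through the Vershik--Kerov correspondence between normalized traces and non--negative harmonic functions on the branching graph.

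Your concern about compatibility of $*$--structures is legitimate but can be dispatched more simply than via Jucys--Murphy elements. For a finite--dimensional semisimple algebra $A$ over $\mathbb C$ equipped with any $*$--structure making it a $C^*$--algebra, the decomposition into simple two--sided ideals is purely algebraic, and on each simple ideal the unique (up to positive scalar) trace is automatically positive on elements of the form $a^*a$. Hence a central functional $\chi$ on $A$ is positive if and only if the coefficients $c_\lambda$ in $\chi=\sum_\lambda c_\lambda\chi^\lambda$ are all non--negative, and this criterion does not depend on which $C^*$--structure one uses. Applied to $\H(n)$ and $\mathbb C(S(n))$, this shows that the positive cones of central functionals coincide under any algebra isomorphism, so the transfer of Thoma's theorem goes through without further work.
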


Combining Propositions \ref{prop_rep_GL_H} and \ref{prop_characters_H} with Theorem
\ref{theorem_characters_of_GLB} we arrive at the following statement which should be viewed as an
infinite-dimensional analogue of Proposition \ref{prop_rep_GL_H}.

\begin{theorem}[Restriction theorem for unipotent traces]
\label{Theorem_restriction_to_Hecke}
 The restriction of the extreme unipotent trace of $\A(\GLB)$ indexed by pair of sequences
 $\alpha,\beta$ on the infinite-dimensional Hecke algebra $\H(\infty)\subset \A(\GLB)$ is the
 extreme  trace of $\H(\infty)$ indexed by the same parameters.
\end{theorem}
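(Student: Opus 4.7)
The plan is to assemble the theorem from three previously-established ingredients: the explicit formula for extreme unipotent traces given by Theorem \ref{theorem_characters_of_GLB}, the identification of $\H(n)$-characters as restrictions of unipotent $\GL{n}$-characters supplied by Proposition \ref{prop_rep_GL_H}, and the classification of extreme traces of $\H(\infty)$ recorded in Proposition \ref{prop_characters_H}.

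First I specialize Theorem \ref{theorem_characters_of_GLB} to the unipotent case, which by Proposition \ref{prop_def_unipotent} corresponds precisely to $\omega = (\alpha,\beta,\emptyset)$. With $f \equiv \emptyset$, formula \eqref{eq_extreme_GLB} reads
$$
\chi^{(\alpha,\beta,\emptyset)}\big|_{\A(\GLB)_n} = \sum_{\lambda \in \Y_n} \Sp_{\alpha,\beta,1}[s_\lambda] \, \chi^{E_1(\lambda)}(\cdot),
$$
where $\chi^{E_1(\lambda)}$ is the conventional matrix-trace character of the unipotent irreducible representation of $\GL{n}$ indexed by $\lambda$. Next I restrict both sides to the subalgebra $\H(n) \subset \A(\GLB)_n \simeq \mathbb C(\GL{n})$. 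By Proposition \ref{prop_rep_GL_H}, each restriction $\chi^{E_1(\lambda)}\big|_{\H(n)}$ coincides with the character $\chi^\lambda$ of the irreducible $\H(n)$-representation indexed by $\lambda$. This yields
$$
\chi^{(\alpha,\beta,\emptyset)}\big|_{\H(n)} = \sum_{\lambda \in \Y_n} \Sp_{\alpha,\beta,1}[s_\lambda] \, \chi^\lambda,
$$
which is literally the formula of Proposition \ref{prop_characters_H} for the restriction to $\H(n)$ of the extreme $\H(\infty)$-trace indexed by $(\alpha,\beta)$. Since the embeddings $\H(n) \hookrightarrow \H(n+1)$ are compatible with $\A(\GLB)_n \hookrightarrow \A(\GLB)_{n+1}$ (as already noted in the definition of $\H(\infty)$) and $\H(\infty) = \bigcup_n \H(n)$, agreement on every finite level gives agreement on all of $\H(\infty)$, and the theorem follows. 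In particular the normalization $\chi(s_e) = 1$ required in Proposition \ref{prop_characters_H} is built into this matching and needs no separate check.

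There is no substantial obstacle here; the only point that merits care is the matrix-trace version of Proposition \ref{prop_rep_GL_H}. I argue it as follows. The Borel element $s_e = \tfrac{1}{|\B_n|}\sum_{b\in\B_n} e_b \in \H(n) \subset \mathbb C(\GL{n})$ acts in any unitary representation of $\GL{n}$ as the orthogonal projector onto the $\B_n$-invariants, and $\H(n)$ coincides with $s_e\,\mathbb C(\GL{n})\,s_e$. Hence any element of $\H(n)$ acts in a fixed $\GL{n}$-representation block-diagonally relative to the decomposition into $\B_n$-invariants and their orthogonal complement, with vanishing lower block; its matrix trace on the full space therefore equals its matrix trace on the invariants, which is exactly the character of the corresponding $\H(n)$-representation. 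The same observation simultaneously explains why non-unipotent $\GL{n}$-characters contribute nothing after restriction (the $\B_n$-invariant subspace is zero in such representations), so that the formula of Theorem \ref{theorem_characters_of_GLB} specializes cleanly to the unipotent summand and no other terms intrude.
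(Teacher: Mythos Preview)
Your proof is correct and follows exactly the same route as the paper, which simply states that the theorem follows by combining Propositions \ref{prop_rep_GL_H} and \ref{prop_characters_H} with Theorem \ref{theorem_characters_of_GLB}. You have filled in the details more carefully than the paper does, including the explicit justification that the matrix trace on $\H(n)$ agrees with the trace on the $\B_n$-invariants, which is a welcome elaboration.
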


\section{Identification of unipotent traces with probability measures}
\label{Section_measures_unipotent}

For any $g\in\GL{n}$ let $\Cyl^\GLB_g$ denote the set of all $h\in\GLB$ such that $I^\GLB_g(h)=1$
(this function was defined at the beginning of Section \ref{section_A_as_inductive_limit}.) We call
$\Cyl^\GLB_g$ the cylindrical set of $g$. Clearly, sets $\Cyl^\GLB_g$ span the $\sigma$-algebra of
Borel sets on $\GLB$.

\begin{theorem}
\label{theorem_character_GLB_is_measure}
 Let $\chi^{\omega}$ be a unipotent  trace of $\A(\GLB)$.
 There exists a unique probability measure $\varrho^\GLB_{\omega}$ on
 $\B\subset\GLB$, such that the probabilities of cylindrical sets coincide with values of
 $\chi^{\omega}$ on the indicator functions of these cylinders. Formally,
 for any upper-triangular  matrix $g\in\GL{n}$ we have:
 \begin{equation}
 \label{eq_character_is_measure}
  \chi^{\omega}(I^\GLB_g)=\varrho^\GLB_{\omega}(\Cyl^\GLB_g)
 \end{equation}
\end{theorem}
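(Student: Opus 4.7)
The plan is to define $\varrho^\GLB_\omega$ directly on the clopen algebra of cylindrical subsets of $\B$ via the formula \eqref{eq_character_is_measure}, verify the three premeasure axioms (consistency, total mass one, non-negativity), and then extend by a standard profinite/Kolmogorov argument to the Borel $\sigma$-algebra. Uniqueness is automatic because the cylinders form a $\pi$-system generating the Borel $\sigma$-algebra of $\B$.

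First I would observe that for $g\in\B_n$ the cylinder $\Cyl^\GLB_g$ actually lies in $\B$: if $X\in\GLB_n$ satisfies $X^{(n)}=g$ and $g$ is upper-triangular, then every below-diagonal entry of $X$ either inherits from $g$ or is forced to zero by the definition of $\GLB_n$. Moreover, at level $n$ the family $\{\Cyl^\GLB_g:g\in\B_n\}$ is a finite clopen partition of $\B$, and refining $n\to n+1$ generates the Borel $\sigma$-algebra of the profinite group $\B$. Setting $\varrho^\GLB_\omega(\Cyl^\GLB_g):=\chi^\omega(I^\GLB_g)$ for $g\in\B_n$, I would check: \emph{(i) consistency under refinement}, which follows from $I^\GLB_g=\sum_{h\in Ext^\GLB(g)}I^\GLB_h$ (the defining identity for the embedding $\A(\GLB)_n\hookrightarrow\A(\GLB)_{n+1}$) together with the observation that all $h\in Ext^\GLB(g)$ are automatically upper-triangular when $g$ is; \emph{(ii) total mass one}, since $\sum_{g\in\B_n}I^\GLB_g=I_\B$ is precisely the unit element of the subalgebra $\H(\infty)\subset\A(\GLB)$, and Theorem \ref{Theorem_restriction_to_Hecke} identifies $\chi^\omega|_{\H(\infty)}$ with the extreme trace of $\H(\infty)$ normalized by $\chi(s_e)=1$ (Proposition \ref{prop_characters_H}), so $\chi^\omega(I_\B)=1$.

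The principal obstacle is \emph{(iii) non-negativity}: $\chi^\omega(I^\GLB_g)\ge 0$ for every $g\in\B_n$. This is not formal from positive-definiteness of $\chi^\omega$ on $\A(\GLB)$ (which only gives $\chi^\omega(a^*a)\ge 0$, and cylinder indicators are not of that form). Since $I^\GLB_g$ differs from $e_g$ by the positive factor $(q-1)^{-n}q^{-n(n-1)/2}$, the task reduces to showing $\chi(g)\ge 0$ where $\chi$ denotes the restriction of $\chi^\omega$ to $\mathbb C(\GL{n})$. By Proposition \ref{prop_def_unipotent}, $\chi=\sum_{\lambda\in\Y_n}c_\lambda\chi^\lambda$ with $c_\lambda\ge 0$, so it suffices that $\chi^\lambda(g)\ge 0$ for unipotent characters and upper-triangular $g$. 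For this I would invoke the decomposition \eqref{eq_Kostka}, $\chi^\lambda=\sum_\mu K_{\lambda,\mu}\psi^q_\mu$, where the Kostka coefficients $K_{\lambda,\mu}$ are non-negative integers and $\psi^q_\mu(g)$ counts the $g$-fixed flags in $fl_\mu$, hence is a non-negative integer. This geometric positivity of the flag-counting permutation characters, established in Section \ref{Section_unipotent_values}, is the real content behind the theorem.

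Once (i)--(iii) are in hand, extension to a countably additive Borel probability measure on $\B$ is immediate: $\B$ is compact and every cylinder is clopen, so any finitely additive non-negative set function on the algebra of cylinders is automatically $\sigma$-additive by elementary compactness; equivalently, viewing $\B=\plim \B_n$ as an inverse limit of finite groups, one may apply Kolmogorov's consistency theorem to the coherent system of probabilities $\{\varrho^\GLB_\omega|_{\B_n}\}$. This yields the unique probability measure $\varrho^\GLB_\omega$ on $\B$ satisfying \eqref{eq_character_is_measure}.
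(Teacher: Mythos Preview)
Your overall strategy (define the set function on cylinders, check consistency, total mass, and non-negativity, then extend by Kolmogorov/compactness) is exactly right, and items (i), (ii) and the extension step are fine. The gap is in step (iii).

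The decomposition you invoke, equation~\eqref{eq_Kostka}, expresses an irreducible unipotent character $\chi^\lambda$ as $\sum_\mu K_{\lambda,\mu}\psi^q_\mu$; but these $K_{\lambda,\mu}$ are \emph{not} the ordinary Kostka numbers. As the paper states just below \eqref{eq_Kostka}, they are defined by $m_\mu=\sum_\lambda K_{\lambda,\mu}s_\lambda$, i.e.\ they are entries of the \emph{inverse} Kostka matrix and are frequently negative. Already for $n=2$ one has $\chi^{(1,1)}=\psi^q_{(1,1)}-\psi^q_{(2)}$; for $n=3$, $\chi^{(1,1,1)}=\psi^q_{(1,1,1)}-2\psi^q_{(2,1)}+\psi^q_{(3)}$. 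So the inequality $\chi^\lambda(g)\ge 0$ does not follow from ``Kostka $\ge 0$ and flag counts $\ge 0$''. A quick sanity check: your argument uses nothing about $g$ being upper-triangular, so if it worked it would give $\chi^\omega(I^\GLB_g)\ge 0$ for \emph{all} $g\in\GL{n}$, contradicting the Remark immediately following the theorem.

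The paper obtains non-negativity by a different route. For $g$ unipotent of Jordan type $\mu$ it uses formula~\eqref{eq_characters_through_Kostka},
\[
\chi^\omega(g)=q^{n(\mu)}\sum_\lambda K^{q^{-1}}_{\lambda,\mu}\,\Sp_{\alpha,\beta,1}[s_\lambda],
\]
and appeals to two genuine positivity facts: the $q$-Kostka polynomials $K^{q^{-1}}_{\lambda,\mu}$ have non-negative coefficients (Lascoux--Sch\"utzenberger, cf.\ \cite[III.6]{M}), and $\Sp_{\alpha,\beta,1}[s_\lambda]\ge 0$ by Thoma's theorem. For a general upper-triangular $g$ one then combines this with Theorems~\ref{Theorem_multiplicativity_of_characters} and~\ref{theorem_character_at_simple_class} (the $k=1$ case): split $g$ into blocks with a single eigenvalue, observe that each block contributes the same Hall--Littlewood specialization as a unipotent matrix of the same Jordan type, and multiply. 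This is where upper-triangularity enters --- it forces every eigenvalue to lie in $\F$, so only the degree-one case $k=1$ of Theorem~\ref{theorem_character_at_simple_class} is needed and the plethystic twist that can produce negative values never appears.
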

\begin{proof}
 The $q$--Kostka numbers $K_{\mu,\lambda}^{q^{-1}}$ are polynomials in $q^{-1}$ with non-negative
 coefficients (see e.g. \cite[Section III.6]{M}). Therefore, $K_{\mu,\lambda}^{q^{-1}}\ge 0$ for
 all $\mu$ and $\lambda$. Furthermore, also $\Sp_{\alpha,\beta,1}[s_\lambda]\ge 0$ (see \cite{VK_Long},
 \cite{Kerov_book}).
 Therefore, formula \eqref{eq_characters_through_Kostka} implies that the values in the left side of \eqref{eq_character_is_measure}
 are non-negative. Consequently, we can \emph{define} the measure $\varrho^\GLB_{\omega}$ through the equation
 \eqref{eq_character_is_measure}. Definitions of the
  functions $I^\GLB_g$ and  traces readily imply that we get a well-defined probability measure.

 The uniqueness follows from Theorems \ref{theorem_character_at_simple_class} and
 \ref{Theorem_multiplicativity_of_characters} which prove that the  trace $\chi^\omega$ is
 uniquely defined by its values on unipotent classes.
\end{proof}
\noindent {\bf Remark.} Analyzing the statement of Theorem \ref{theorem_character_at_simple_class}
one sees that, for general $g$, the values of $\chi^{\omega}(I^\GLB_g)$ might be negative. Thus, if
we try to extended the measure $\varrho^\omega$ to the whole group $\GLB$, then it will no longer
be a positive measure.

\bigskip

The aim of this section is to analyze the properties of measures $\varrho^\GLB_\omega$. At this
point it is convenient to switch from $\GLB$ to $\GLU$. While all the proofs remain almost the
same, but the statements for $\GLU$ look simpler and cleaner. The reason is that unipotent upper
triangular matrices have a unique eigenvalue $1$, while general upper-triangular matrices might
have up to $q-1$ different eigenvalues and we would have to analyze the part of measure
corresponding to each of them separately.

The whole theory for $\GLU$ is very much parallel to $\GLB$. We give an overview here, the details
can be found in the Appendix. In the same way as for $\GLB$ we introduce the algebra $\A(\GLU)$ of
continuous functions on $\GLU$ with compact support taking only finitely many values. $\A(\GLU)$
is yet again an inductive limit of algebras $\A(\GLU)_n$ isomorphic to $\mathbb C(\GL{n})$,
however, the embeddings become different. While the classification of  traces of $\A(\GLU)$ is a
bit different than that of $\A(\GLB)$ there is still a class of unipotent  traces parameterized by
sequences $\alpha,\beta$. Moreover, under the identification $\A({\GLU})_n\simeq \mathbb
C(\GL{n})\simeq \A(\GLB)_n$ the restriction of unipotent  trace of $\A({\GLU})$ and $\A(\GLB)$ are
the same functions. Because of that it makes no reason to distinguish between the unipotent
traces of $\A(\GLU)$ and $\A(\GLB)$.

For $\GLU$ the group $\B$ is replaced by the subgroup $\U$ of unipotent upper-triangular matrices
and Theorem \ref{theorem_character_GLB_is_measure} transforms into Theorem
\ref{theorem_character_GLU_is_measure} (with notions of the indicator function $I^\GLU_g$ and
cylindrical set $\Cyl^\GLU_g$ analogous to those for $\GLB$) the proof of which remains the same.

\begin{theorem}
\label{theorem_character_GLU_is_measure}
 Let $\chi^{\omega}$ be a unipotent  trace of $\A(\GLU)$.
 There exists a unique probability measure $\varrho^\GLU_{\omega}$ on
 $\U$, such that for any upper-triangular  matrix $g\in\GL{n}$ we have:
 \begin{equation}
 \label{eq_character_is_measure_GLU}
  \chi^{\omega}(I^\GLU_g)=\varrho^\GLU_{\omega}(\Cyl^\GLU_g)
 \end{equation}
\end{theorem}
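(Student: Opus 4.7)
The plan is to follow the proof of Theorem \ref{theorem_character_GLB_is_measure} line by line, substituting $\GLB$, $\B$ by $\GLU$, $\U$. The three ingredients needed are: the identification $\A(\GLU)_n\simeq\mathbb C(\GL{n})$ detailed in the Appendix, the explicit unipotent trace formulas from Theorems \ref{theorem_character_at_unipotent_class}--\ref{Theorem_multiplicativity_of_characters}, and positivity of the combinatorial coefficients that appear. Because unipotent traces of $\A(\GLU)$ and $\A(\GLB)$ are the same class functions on $\GL{n}$ under the common isomorphism $\A(\GLU)_n\simeq\mathbb C(\GL{n})\simeq\A(\GLB)_n$, the trace formulas derived for $\GLB$ transfer directly.

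First I would establish non-negativity of $\chi^{\omega}(I^\GLU_g)$ when $g$ is a unipotent upper-triangular matrix in $\GL{n}$, which is the only relevant case since cylinders $\Cyl^\GLU_g$ with a non-unit diagonal entry on $g$ do not meet $\U$. If $\lambda\in\Y_n$ is the Jordan type of such $g$, then Theorem \ref{theorem_character_at_unipotent_class} combined with the expansion \eqref{eq_Q_mod_into_Schur} gives
$$\chi^{\omega}(e_g)=q^{n(\lambda)}\sum_{\mu\in\Y_n}K^{q^{-1}}_{\mu,\lambda}\,\Sp_{\alpha,\beta,1}[s_\mu],$$
which is a sum of products of non-negative terms: the modified Kostka numbers are polynomials in $q^{-1}$ with non-negative integer coefficients, and the Thoma--Vershik--Kerov specializations satisfy $\Sp_{\alpha,\beta,1}[s_\mu]\ge 0$. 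Dividing by the positive normalization relating $e_g$ to $I^\GLU_g$ under $\A(\GLU)_n\simeq\mathbb C(\GL{n})$ yields $\chi^{\omega}(I^\GLU_g)\ge 0$.

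Next I would define $\varrho^\GLU_\omega$ on the algebra of cylinder sets by declaring $\varrho^\GLU_\omega(\Cyl^\GLU_g):=\chi^{\omega}(I^\GLU_g)$ and verify Kolmogorov compatibility under refinement. This reduces to the identity $I^\GLU_g=\sum_h I^\GLU_h$ summed over the extensions $h\in\U_{n+1}$ of $g\in\U_n$ (the $\GLU$-analogue of the parabolic embedding $i_n$); applying the linear functional $\chi^{\omega}$ turns this into finite additivity of $\varrho^\GLU_\omega$. Total mass $1$ follows from $\mu_\GLU(\U)=1$ together with the normalization of the extreme trace of $\H(\infty)$ attached to $\chi^{\omega}$ via Theorem \ref{Theorem_restriction_to_Hecke}. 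Since $\U$ is compact as the inverse limit of the finite groups $\U_n$, Carath\'eodory extension produces a unique Borel probability measure. Uniqueness is automatic because the cylinders generate the Borel $\sigma$-algebra of $\U$.

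The main obstacle I anticipate is the bookkeeping of the normalization constant in $\A(\GLU)_n\simeq\mathbb C(\GL{n})$, which differs from the $\GLB$ constant $(q-1)^n q^{n(n-1)/2}=|\B_n|$ by the factor $|\B_n|/|\U_n|=(q-1)^n$; this factor is what forces $\chi^{\omega}(I^\GLU_g)$ to equal the cylinder probability with no extraneous scalars and what makes the total mass come out to $1$. Once this identification is pinned down, every other step is a direct transposition of the $\GLB$ proof.
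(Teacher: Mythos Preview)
Your proposal is correct and follows essentially the same route as the paper, which for this theorem literally says only that ``the proof remains the same'' as for Theorem~\ref{theorem_character_GLB_is_measure}. Your argument is in fact more explicit than the paper's in two places: you spell out the Kolmogorov compatibility via the identity $I^\GLU_g=\sum_{h\in Ext^\GLU(g)} I^\GLU_h$, and you correctly isolate the simplification peculiar to $\GLU$, namely that only unipotent upper-triangular $g$ are relevant because the measure lives on $\U$ (whereas for $\GLB$ the paper tacitly needs multiplicativity and Theorem~\ref{theorem_character_at_simple_class} to cover all of $\B$). Your remark on the normalization constant $(q-1)^n$ is exactly the bookkeeping that makes the total mass equal $1$; a quicker check than the Hecke detour is to observe that $I_\U=I^\GLU_{e(1)}$ and compute $\chi^\omega(e_{e(1)})=\Sp_{\alpha,\beta,1}[\widetilde Q_{(1)}]=\Sp_{\alpha,\beta,1}[p_1]=1$ directly.
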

Note that the identification of unipotent traces of $\A({\GLU})$ and $\A({\GLB})$ implies that
random $\varrho^\GLU_{\omega}$--distributed element $\mathfrak u\in \U$ can be obtained by
conditioning random  $\varrho^{\GLB}_{\omega}$--distributed element $\mathfrak b\in\B$ to belong to
$\U$ (since for most $\omega$ we have $\varrho^{\GLB}_{\omega}(\U)=0$, to make a rigorous
definition here one should condition $\mathfrak b$ to have a unipotent $n\times n$ top--left
corner, and then send $n\to\infty$.)

The measures corresponding to unipotent  traces belong to a more general class of measures which
we now describe.

\begin{definition}
A probability measure $\varrho$ on $\U$ is called \emph{central} if $\varrho(Cyl^\GLU_g)$ depends
only on the conjugacy class, i.e.\ on the Jordan normal form of $g$.
 In other words, $\varrho$ is invariant under conjugations by elements of $\GL{\infty}$.
\end{definition}
\noindent {\bf Remark.} Conjugations, in general, do not preserve the set of upper-triangular
matrices, so the invariance means that if both $M\subset \U$ and $gMg^{-1}\subset \U$ for some
measurable set $M$ and $g\in\GL{\infty}$, then the measures of $M$ and $gMg^{-1}$ are equal.


\begin{definition}
A central measure $\varrho$ is called \emph{ergodic} if it is an \emph{extreme point} of the
convex set of all central probability measures.
\end{definition}

The following conjecture describes the set of all ergodic central measures on $\U$. For a matrix
$u\in \U$, as above, $u^{(n)}$ stays for its top-left $n\times n$ corner. Since all the
eigenvalues of $u$ are $1$s, the Jordan normal form of $u^{(n)}$ can be parameterized by a Young
diagram $\lambda$, let $\lambda_i(u,n)$ and $\lambda'_i(u,n)$ denote the row and column lengths of
$\lambda$, respectively.


\begin{conjecture}[Classification and law of large numbers for ergodic central measures]
\label{Conj_Kerov}
 Let $\vartheta$ be an ergodic central measure on $\U$. There exist two
 sequences $r_i$, $c_i$ (row frequencies and column frequencies), such that for every $i$,
 $\vartheta$--almost surely
 $$
  \lim_{n\to\infty} \frac{\lambda_i(u,n)}{n} = r_i,\quad \lim_{n\to\infty} \frac{\lambda'_i(u,n)}{n} = c_i.
 $$
 Moreover, for each pair of sequences $r=(r_1\ge r_2\ge\dots\ge 0)$ and $c=(c_1\ge c_2\ge\dots\ge 0)$
 satisfying $\sum_i(r_i+c_i)\le 1$ there exists a unique ergodic central measure $\vartheta^{r,c}$ with
 corresponding row and column frequencies.

 The $\vartheta^{r,c}$--probabilities of cylindrical sets can be expressed through the row
 frequencies and column frequencies  via the formula
\begin{equation}
 \label{eq_cylindrical_prob_for_ergodic}
  \vartheta(\Cyl^\GLU_g)=\frac{q^{-n(n-1)/2}}{(1-q^{-1})^n}
  q^{n(\lambda)} \Sp_{r,c^{(q)},1} [Q_\lambda(\cdot;q^{-1})]
 \end{equation}
 where $g\in\GL{n}$ is a unipotent upper-triangular matrix corresponding to the conjugacy class
 $\lambda$, $Q_\lambda$ is, as above, the Hall-Littlewood polynomial and for a sequence
 $c=c_1,c_2,\dots$, the sequence $c^{(q)}$ is obtained by rearranging
 two-dimensional array of numbers $(1-q^{-1})c_i q^{1-j}$, $i,j=1,2,\dots$ in decreasing order.
\end{conjecture}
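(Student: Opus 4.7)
The plan is to reformulate central measures on $\U$ as coherent systems on a graded branching graph whose edge weights come from Hall--Littlewood theory, to identify such coherent systems with multiplicative positive specializations of $\Lambda$ on the family $\{Q_\lambda(\cdot;q^{-1})\}$, and then to extract the law of large numbers via the Vershik--Kerov ergodic method.

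\emph{Step 1: reduction to coherent systems.} For a central probability measure $\vartheta$ on $\U$ and $\lambda\in\Y_n$ let $p_n(\lambda)=\vartheta\{u\in\U:u^{(n)}\text{ has Jordan type }\lambda\}$. Conjugation-invariance, together with the observation that conjugacy classes inside $\U_n$ are unions of orbits of the larger group of row/column operations, shows that $\vartheta$ is entirely determined by the numbers $p_n(\lambda)$, and that for $g\in\U_n$ with Jordan type $\lambda$ one has $\vartheta(\Cyl^\GLU_g)=p_n(\lambda)/N_n(\lambda)$, where $N_n(\lambda)$ is the cardinality of the class of $g$ in $\U_n$, given by Hall's classical formula \cite[Ch.~II]{M}. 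Projecting from level $n$ to level $n-1$ via $u\mapsto u^{(n-1)}$ translates into a branching relation $p_{n-1}(\mu)=\sum_{\lambda\succ\mu}c(\mu,\lambda)\,p_n(\lambda)$, whose coefficients $c(\mu,\lambda)$ are the standard Hall numbers counting extensions of the class $\mu$ to the class $\lambda$. Thus central measures on $\U$ are in bijection with non-negative normalized coherent systems on the Young graph $\Y$ with Hall multiplicities.

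\emph{Step 2: specializations of $\Lambda$.} Using the Pieri rule for Hall--Littlewood $Q$-functions one checks that the map $\lambda\mapsto q^{n(\lambda)}Q_\lambda(\cdot;q^{-1})$ intertwines the Hall branching with the action of multiplication by $p_1$ in $\Lambda$. Consequently a coherent system $\{p_n\}$ is the same datum as a linear functional $\Phi:\Lambda\to\mathbb C$ such that
\[
 p_n(\lambda)=\frac{q^{-n(n-1)/2}}{(1-q^{-1})^n}\,q^{n(\lambda)}\,\Phi\!\left[Q_\lambda(\cdot;q^{-1})\right],
\]
positivity $p_n(\lambda)\ge 0$ is equivalent to $\Phi[Q_\lambda]\ge 0$ for every $\lambda$, and a standard argument (as in \cite{VK_Long},\cite{Kerov_book}) shows that $\vartheta$ is ergodic if and only if $\Phi$ is a ring homomorphism. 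Plugging this into the formula from Step~1 already yields \eqref{eq_cylindrical_prob_for_ergodic} provided $\Phi=\Sp_{r,c^{(q)},1}$.

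\emph{Step 3: classification and LLN.} The candidate list $\Sp_{r,c^{(q)},1}$ can be verified to be multiplicative and non-negative on $\{Q_\lambda\}$ by writing out $Q_\lambda$ in the monomial basis and using the Thoma-type positivity of \cite{Thoma1},\cite{VK_Long}; the corresponding measures are therefore ergodic. The law of large numbers is extracted from the Vershik--Kerov ergodic method \cite{VK_AF},\cite{Kerov_book}: extreme coherent systems on $\Y$ are precisely those along which cotransition ratios converge, the limits being $\vartheta$-almost sure asymptotic frequencies of row and column lengths, which on the candidates $\Sp_{r,c^{(q)},1}$ must equal $(r_i)$ and $(c_i)$ by direct computation on the generators $p_k$.

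\emph{The main obstacle.} The missing piece is the converse: proving that \emph{every} multiplicative functional on $\Lambda$ which is non-negative on all $Q_\lambda(\cdot;q^{-1})$ lies in the above family. This is precisely Kerov's conjecture for Macdonald polynomials specialized at $(q,t)=(0,q^{-1})$ \cite[\S II.9]{Kerov_book}, which remains open. My proposal therefore can only produce Proposition \ref{prop_list_of_ergodic} --- the full list of ergodic candidates together with their properties --- with the complete classification statement left as a conjecture. A realistic route toward the missing direction would be a finite-dimensional approximation: restrict $\Phi$ to the subalgebra generated by the first $N$ power sums, classify the resulting finite-dimensional Hall--Littlewood-positive cones directly, and pass to the limit $N\to\infty$ using the compactness of the space of coherent systems.
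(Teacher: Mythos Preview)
Your overall architecture---reducing central measures to coherent systems, identifying these with $Q$-positive linear functionals on $\Lambda$, and invoking the Ring Theorem to equate ergodicity with multiplicativity---is exactly the framework the paper uses to prove its partial result (Proposition~\ref{prop_list_of_ergodic}), and you are right that the full classification reduces to Kerov's conjecture and is left open. So at the level of strategy you and the paper agree.

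There are, however, two places where your Step~3 is weaker than the paper's argument. First, your positivity verification for $\Sp_{r,c^{(q)},1}[Q_\lambda]$ (``writing out $Q_\lambda$ in the monomial basis and using Thoma-type positivity'') is not a working argument: the coefficients of $Q_\lambda$ in the monomial basis have no evident sign, and Thoma positivity concerns Schur functions, not monomials. The paper instead proceeds by checking three elementary specializations directly (a single row parameter, a single column parameter, and the pure-$p_1$ specialization), and then builds the general $\Sp_{r,c^{(q)},1}$ from these by a ``mixing'' operation whose positivity rests on the non-negativity of the Hall--Littlewood structure constants $c_{\lambda\mu}^{\nu}$ (equivalently, of skew $Q_{\lambda/\mu}$). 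This is the substantive content of Proposition~\ref{prop_list_of_ergodic}, and your sketch does not supply it.

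Second, your claim that the law of large numbers follows from the Vershik--Kerov ergodic method plus a ``direct computation on the generators $p_k$'' overreaches. The ergodic method yields existence of limiting frequencies for extreme boundary points in the Young-graph setting, but transporting that statement to the Hall--Littlewood branching, and then identifying the limits with the parameters $(r_i,c_i)$ of $\Sp_{r,c^{(q)},1}$, is precisely the open part of the conjecture. The paper does not claim to prove any LLN beyond citing Borodin's result for the Haar measure; the general case remains unproved. So while your Steps~1--2 recover what the paper actually establishes, Step~3 as written promises more than either you or the paper can currently deliver.
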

\noindent {\bf Remark 1.} Conjecture \ref{Conj_Kerov} is a particular case of the conjecture on
Macdonald polynomials stated in \cite[Section II.9]{Kerov_book} and which is now known as Kerov
conjecture. A part of this conjecture was also briefly mentioned in Section 4 of \cite{Fu}.

\noindent {\bf Remark 2.} If row frequencies $r_i$ form a geometric series
$(1-q^{-1}),(1-q^{-1})q^{-1},\dots$ and column frequencies $c_i$ are zero, then (see \cite[Exercise
1 in Section III.2]{M})
$$\Sp_{r,c^{(q),1}} [Q_\lambda(\cdot;q^{-1})]=(1-q^{-1})^n q^{-n(\lambda)}$$
and $\vartheta^{r,c}$ becomes the Haar (put if otherwise, uniform) measure on $U$. The row and
column frequencies for the Haar measure on $\U$ were first found by A.~Borodin in
\cite{Borodin_GL_short},\cite{Borodin_GL}.

\noindent {\bf Remark 3.} If $r_i=0$ and $c=(1,0,0,\dots)$ then $\vartheta^{r,c}$ is the
delta-measure on the identity matrix.

\noindent {\bf Remark 4.} If $r=(1,0,\dots)$ and $c_i=0$ then $\vartheta^{r,c}$ is the uniform
measure on matrices $u\in \U$ such that $u-Id$ has maximal possible rank. In other words, all
matrix elements of $u$ on the second diagonal are non-zero.

\smallskip

Below we prove two partial results towards Conjecture \ref{Conj_Kerov}, in particular, we show that
the measure $\vartheta^{r,c}$ with cylindrical probabilities given by
\eqref{eq_cylindrical_prob_for_ergodic} is, indeed, an ergodic central measure. But, first, let us
explain the relation of measures $\vartheta^{r,c}$ to the unipotent  traces.

\begin{theorem}
 \label{Theorem_prob_measure_for_character}
 The measure $\varrho^{\GLU}_{\omega}$ is an ergodic central probability measure on $\U$.
 More precisely, if $\omega=(\alpha,\beta)$, then $\varrho^\omega=\vartheta^{\alpha^{(q)},\beta}$,
 where for a sequence
 $\alpha=\alpha_1,\alpha_2,\dots$, the sequence $\alpha^{(q)}$ is obtained by rearranging
 two-dimensional array of numbers $(1-q^{-1})\alpha_i q^{1-j}$, $i,j=1,2,\dots$ in decreasing order.
\end{theorem}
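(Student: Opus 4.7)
The plan is to establish the theorem in three steps: verify that $\varrho^{\GLU}_\omega$ is central, identify it explicitly with $\vartheta^{\alpha^{(q)},\beta}$ via Hall--Littlewood specialization identities, and deduce ergodicity from the extremality of $\chi^\omega$.

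For centrality, Theorem~\ref{theorem_character_GLU_is_measure} already produces $\varrho^{\GLU}_\omega$ as a bona fide probability measure, so the task is to verify invariance under conjugation by $g \in \GL{\infty}$. Any such $g$ lies in $\GL{N}$ for some $N$, and the restriction of $\chi^\omega$ to $\A(\GLU)_N \simeq \mathbb{C}(\GL{N})$ is central by the trace property. Hence for any $h \in \GL{n}$ with $n \ge N$ we have $\chi^\omega(I^\GLU_h) = \chi^\omega(I^\GLU_{ghg^{-1}})$, which gives $\varrho^{\GLU}_\omega(\Cyl^\GLU_h) = \varrho^{\GLU}_\omega(\Cyl^\GLU_{ghg^{-1}})$. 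A standard monotone class argument extends the invariance from the cylindrical algebra to all Borel subsets of $\U$ on which $g\cdot(\cdot)\cdot g^{-1}$ lands back in $\U$.

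For the explicit form, I would combine Theorem~\ref{theorem_character_at_unipotent_class} with a direct Hall--Littlewood computation. For unipotent $g \in \GL{n}$ of Jordan type $\lambda$ that theorem gives $\chi^\omega(e_g) = q^{n(\lambda)} \Sp_{\alpha,\beta,1}[\widetilde Q_\lambda]$. Under the normalization $\mu_{\GLU}(\U) = 1$, translation invariance together with $|\U_n| = q^{n(n-1)/2}$ shows that every cylinder $\{h \in \GLU : h^{(n)} = g\}$ has $\mu_\GLU$-measure $q^{-n(n-1)/2}$, so the isomorphism $\A(\GLU)_n \simeq \mathbb{C}(\GL{n})$ sends $e_g \mapsto q^{n(n-1)/2} I^\GLU_g$ and
\[
 \varrho^{\GLU}_\omega(\Cyl^\GLU_g) = q^{-n(n-1)/2} q^{n(\lambda)} \Sp_{\alpha,\beta,1}[\widetilde Q_\lambda].
\]
To match the right-hand side of \eqref{eq_cylindrical_prob_for_ergodic} with $r = \alpha^{(q)}$ and $c = \beta$, I would verify the key identity
\[
 \Sp_{\alpha^{(q)},\beta^{(q)},1}[Q_\lambda(\cdot; q^{-1})] = (1-q^{-1})^{|\lambda|} \Sp_{\alpha,\beta,1}[\widetilde Q_\lambda].
\]
This is a direct power-sum comparison: from the definition of the $(\cdot)^{(q)}$ rearrangement one computes $\Sp_{\alpha^{(q)},\beta^{(q)},1}[p_k] = (1-q^{-1})^k (1-q^{-k})^{-1} \Sp_{\alpha,\beta,1}[p_k]$; combining this with $\widetilde Q_\lambda = \mathbb M_q Q_\lambda$ and the homogeneity $Q_\lambda(c x_1, c x_2, \ldots; t) = c^{|\lambda|} Q_\lambda(x_1, x_2, \ldots; t)$ produces the claimed factor $(1-q^{-1})^{|\lambda|}$ and hence \eqref{eq_cylindrical_prob_for_ergodic}.

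For ergodicity, suppose $\varrho^{\GLU}_\omega = t \mu_1 + (1-t) \mu_2$ with $t \in (0,1)$ and $\mu_i$ central probability measures on $\U$. Define functionals $\phi_i$ on $\A(\Uni)$ by $\phi_i(I^\GLU_h) = \mu_i(\Cyl^\GLU_h)$ for unipotent $h$, extended linearly on $\A(\Uni)$ and by zero on the other ideals of $\A(\GLU)$ (per Proposition~\ref{prop_def_unipotent}). Centrality of $\mu_i$ translates into centrality of $\phi_i$; positive-definiteness of $\phi_i$ on the irreducible unipotent characters of $\GL{n}$ reduces, via the $q$-Kostka decomposition \eqref{eq_characters_through_Kostka}, to positivity of the cylindrical probabilities, which is automatic. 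Thus each $\phi_i$ is a unipotent trace and $\chi^\omega = t\phi_1 + (1-t)\phi_2$. Extremality of $\chi^\omega$ forces $\phi_1 = \phi_2 = \chi^\omega$ and hence $\mu_1 = \mu_2 = \varrho^{\GLU}_\omega$. The main obstacle here is verifying that the $\phi_i$ really extend to positive-definite traces on all of $\A(\Uni)$ and not merely to linear functionals on the cylindrical basis; this is where the non-negativity of $q$-Kostka coefficients (\cite[Section III.6]{M}) and of the Thoma specializations $\Sp_{\alpha,\beta,1}[s_\lambda]$ plays the crucial role, and handling it cleanly --- together with the bookkeeping in the Hall--Littlewood identity above --- I expect to be the most technical part of the argument.
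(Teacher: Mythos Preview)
Your identification step (Step 2) is exactly the paper's argument: compute the cylindrical probabilities via Theorem~\ref{theorem_character_at_unipotent_class}, rewrite $\Sp_{\alpha,\beta,1}[\widetilde Q_\lambda]$ as $(1-q^{-1})^{-|\lambda|}\Sp_{\alpha^{(q)},\beta^{(q)},1}[Q_\lambda]$, and match against \eqref{eq_cylindrical_prob_for_ergodic}. Centrality is then immediate from the formula, so your Step~1 is harmless but unnecessary.

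The genuine difference is in ergodicity. The paper does not argue ergodicity inside this proof at all: it simply identifies $\varrho^\omega$ with $\vartheta^{\alpha^{(q)},\beta}$ and then invokes Proposition~\ref{prop_list_of_ergodic}, whose proof uses the Ring Theorem to show that every $\vartheta^{r,c}$ is ergodic. Your route---pull a convex decomposition $\varrho^\omega=t\mu_1+(1-t)\mu_2$ back to a decomposition of the trace $\chi^\omega$ and invoke its extremality---is a legitimate alternative, and more self-contained for this particular theorem (though it does not yield ergodicity of general $\vartheta^{r,c}$, which the paper's approach does).

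However, your construction of $\phi_i$ needs repair. The elements $I^\GLU_h$ for unipotent upper-triangular $h$ are neither in $\A(\Uni)$ nor do they span it, so ``extend linearly on $\A(\Uni)$'' is not meaningful as written. The correct passage is the one in the proof of Proposition~\ref{prop_list_of_ergodic}: a central probability measure $\mu_i$ on $\U$ determines a linear functional $\phi_{\mu_i}$ on $\Lambda$ via \eqref{eq_functional_for_central_measure}, and positivity of cylindrical probabilities translates into $\phi_{\mu_i}[Q_\lambda(\cdot;q^{-1})]\ge 0$. What you then need is $\phi_{\mu_i}[s_\lambda]\ge 0$, which follows from $s_\lambda=\sum_\mu K^{q^{-1}}_{\lambda,\mu}P_\mu$ with $K^{q^{-1}}_{\lambda,\mu}\ge 0$ (this is the argument of Proposition~\ref{prop_ergodic_measures_as_subset}, not \eqref{eq_characters_through_Kostka}, which goes in the opposite direction). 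Once $\phi_{\mu_i}[s_\lambda]\ge 0$ you have a bona fide unipotent trace, and your extremality argument goes through.
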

\noindent {\bf Remark. } Note the \emph{dual} role of (row and column) frequencies. On one hand,
the parameters $\alpha_i$, $\beta_i$ of unipotent characters are limit row and column frequencies
of Young diagrams parameterizing irreducible unipotent representations of $\GL{n}$, see
\cite{VK_S}, \cite{KOO}. On the other hand frequencies show up in the limit behavior of Jordan
Normal forms. The conceptual explanation of this double appearance of frequencies is unknown yet.
Somehow similar phenomena is present in the asymptotic representation theory of symmetric groups
with certain explanation given by the RSK algorithm, see \cite{VK_RSK}.

\begin{proof}[Proof of Theorem \ref{Theorem_prob_measure_for_character}]
  Theorem \ref{theorem_character_at_simple_class} implies that the cylindrical probabilities of
  measure $\varrho^{\omega}$ are given by.
 \begin{equation}
 \label{eq_x6}
  \varrho^\omega(\Cyl^\GLU_g)=q^{-n(n-1)/2}
  q^{n(\lambda)} \Sp_{\alpha,\beta,1} \left[\widetilde Q_\lambda(\cdot;q^{-1})\right]
 \end{equation}
 Note that
 $$
  \Sp_{\alpha,\beta,1} [\widetilde Q_\lambda(\cdot;q^{-1})]=(1-q^{-1})^{-|\lambda|}
  \Sp_{\alpha^{(q)},\beta^{(q),1}} [Q_\lambda(\cdot;q^{-1})].
 $$
 Comparing \eqref{eq_x6} with \eqref{eq_cylindrical_prob_for_ergodic} we conclude that
 $\varrho^\omega=\vartheta^{\alpha^{(q)},\beta}$.
\end{proof}

Now let us prove two results related to Conjecture \ref{Conj_Kerov}.

\begin{proposition}
\label{prop_list_of_ergodic} For any sequences $r=\{r_i\}$, $c=\{c_i\}$ satisfying $\sum_i
(r_i+c_i)\le 1$  the measure $\vartheta^{r,c}$ with cylindrical probabilities
\eqref{eq_cylindrical_prob_for_ergodic} is an ergodic central measure on $\U$.
\end{proposition}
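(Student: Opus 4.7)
My plan is to separately establish three properties: (a) the formula \eqref{eq_cylindrical_prob_for_ergodic} defines a probability measure on $\U$, (b) this measure is central, and (c) it is ergodic. Property (b) is essentially immediate: the right-hand side of \eqref{eq_cylindrical_prob_for_ergodic} depends on $g$ only through its Jordan type $\lambda$, and if $M\subset\U$ and $gMg^{-1}\subset\U$ for some $g\in\GL{\infty}$, then any cylindrical subset of $M$ is mapped by $g$ to a cylindrical set in $\U$ with the same Jordan class data in $\GL{n}$ for large enough $n$; hence the cylindrical probabilities are preserved, and centrality extends by the $\sigma$-algebra generated by cylinders.

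For well-definedness (a), I would check non-negativity, consistency under the projections $u\mapsto u^{(n)}$, and normalization. For non-negativity, I would factor through the modified Hall--Littlewood polynomials: using $\widetilde Q_\lambda=\mathbb M_q Q_\lambda$ and the expansion $\widetilde Q_\lambda=\sum_\mu K^{q^{-1}}_{\mu\lambda}s_\mu$ established in \eqref{eq_Q_mod_into_Schur} (with non-negative coefficients by Lascoux--Schützenberger), one reduces to showing that the composite specialization $\Sp_{r,c^{(q)},1}\circ \mathbb M_q^{-1}$ is Schur-positive. A direct calculation on power sums shows this specialization sends $p_k$ to
$$
(1-q^{-k})\sum_i r_i^k+(-1)^{k-1}(1-q^{-1})^k\sum_i c_i^k,
$$
which is of the Thoma form $\Sp_{\alpha',\beta',\gamma'}$ for some $(\alpha',\beta',\gamma')$ satisfying $\sum(\alpha_i'+\beta_i')\le\gamma'$, and is therefore non-negative on Schur functions by Thoma's theorem. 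For normalization $\sum_{g\in\GL{n}\cap\B_n^{\mathrm{uni}}}\vartheta^{r,c}(\Cyl^\GLU_g)=1$, I would apply the Hall--Littlewood Cauchy identity \eqref{eq_HL_Cauchy} under the specialization, matching it against the known count of upper-triangular unipotent matrices in a Jordan class (whose cardinality involves the prefactor $q^{n(n-1)/2-n(\lambda)}(1-q^{-1})^n$ up to a principal Hall-Littlewood specialization, cf.\ \cite[Section II.1]{M}). Consistency at the step $n\to n+1$ follows from a Pieri-type branching rule for $Q_\lambda(\cdot;q^{-1})$ that describes how a unipotent Jordan class in $\GL{n+1}$ restricts to unipotent Jordan classes in $\GL{n}$.

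For ergodicity (c), I would invoke the Ring Theorem of Vershik--Kerov (see \cite{VK_ring}, \cite{Kerov_book}, and also \cite[Section 8.7]{GO_Ring}) in the spirit of how Thoma's classification is obtained for $S(\infty)$ in \cite{VK_S}. Central measures on $\U$ are in bijection with non-negative normalized coherent systems on the graded graph of unipotent Jordan classes, and via the Hall-algebra identification $\Lambda\cong\bigoplus_n D^{\,``x-1"}_n$ from Proposition \ref{proposition_Hopf_summary}, such coherent systems correspond to non-negative linear functionals on $\Lambda$ normalized by $p^\GLB$. The Ring Theorem asserts that extreme functionals in this cone are precisely those which are \emph{algebra homomorphisms} with respect to the multiplicative structure of $\Lambda$ (equivalently, parabolic induction on the group side). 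The functional $\Sp_{r,c^{(q)},1}$ is an algebra homomorphism by definition on the generators $p_k$, so the corresponding measure $\vartheta^{r,c}$ is extreme, i.e.\ ergodic.

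The main obstacle will be the precise bookkeeping in step (a): one must match the normalization prefactor $q^{-n(n-1)/2}(1-q^{-1})^{-n}$ and the $q^{n(\lambda)}$-weight against the sizes of unipotent conjugacy classes in $\GL{n}$ and against the $b_\lambda(q^{-1})$-factors hidden in $Q_\lambda=b_\lambda(q^{-1})P_\lambda$. All of these are classical Hall-Littlewood identities from \cite[Chapter III]{M}, but their correct deployment to produce telescoping/consistency is delicate. The conceptual content is that the Hall-Littlewood structure at parameter $q^{-1}$ encodes exactly the combinatorics of how unipotent $\GL{n+1}(\F)$-classes branch to $\GL{n}(\F)$-classes under truncation, and once this is set up, (b) and (c) follow by the symmetry of the formula and by the multiplicativity of $\Sp_{r,c^{(q)},1}$ respectively.
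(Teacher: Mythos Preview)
Your overall architecture---setting up a bijection between central probability measures on $\U$ and $Q_\lambda$--positive normalized linear functionals on $\Lambda$, checking coherency via the Hall--Littlewood Pieri rule, and deducing ergodicity from multiplicativity of $\Sp_{r,c^{(q)},1}$ via the Ring Theorem---matches the paper's proof almost exactly. The gap is in your non-negativity argument.

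Your formula for the values of $\Sp_{r,c^{(q)},1}\circ\mathbb M_q^{-1}$ on power sums is correct, but the claim that this is a Thoma specialization is false. Take $r=(1,0,0,\dots)$ and $c=0$: the composite sends $p_k\mapsto 1-q^{-k}$, and one computes directly
\[
(\Sp_{r,c^{(q)},1}\circ\mathbb M_q^{-1})[s_{(1,1)}]
=\tfrac12\bigl((1-q^{-1})^2-(1-q^{-2})\bigr)=q^{-2}-q^{-1}<0.
\]
So the composite is \emph{not} Schur-positive, and the reduction $Q_\lambda\to\widetilde Q_\lambda\to\sum K_{\mu\lambda}^{q^{-1}}s_\mu$ does not give term-wise positivity. (In the example above the negative term cancels against $K^{q^{-1}}_{(2),(1,1)}\cdot(\Sp\circ\mathbb M_q^{-1})[s_{(2)}]$ to give the correct value $0$, but the argument as stated does not see this cancellation.) More structurally: since $\Sp_{r,c^{(q)},1}\circ\mathbb M_q^{-1}$ is multiplicative, Schur-positivity would force it to be an extreme Thoma specialization; the pole analysis of its power-sum generating function (a pole at $z=q>0$ with residue of the wrong sign) rules this out.

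The paper proves $\Sp_{r,c^{(q)},1}[Q_\lambda(\cdot;q^{-1})]\ge 0$ by an entirely different route: it verifies positivity directly in three elementary cases---$r=(1,0,\dots)$, $c=(1,0,\dots)$, and $r=c=0$---and then closes under a ``mixing'' operation $\Sp[p_k]:=a_1^k\Sp_1[p_k]+a_2^k\Sp_2[p_k]$, which preserves $Q$--positivity because the Hall--Littlewood structure constants (equivalently the skew $Q_{\lambda/\mu}$ in the $Q$--basis) are nonnegative for $q>1$. That nonnegativity is the key input you are missing; once it is in place, all $\Sp_{r,c^{(q)},1}$ with finitely many nonzero parameters are reached by iterated mixing, and a limit handles the general case.
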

\begin{proof}

The key property which we use, is the positivity of the structural constants of the multiplication
in the basis of Hall-Littlewood polynomials. In other words, in the identity
\begin{equation}
\label{eq_struct_const}
 P_\lambda(\cdot;q^{-1}) P_\mu(\cdot; q^{-1}) =\sum_\nu c_{\lambda,\mu}^\nu P_\nu(\cdot;q^{-1})
\end{equation}
When $q>1$ all the coefficients $c_{\lambda,\mu}^\nu$ are non-negative. This fact follows from the
known formulas for these coefficients (see e.g. \cite[Theorem 4.9]{R06}, \cite[Theorem 1.3]{Sc06},
\cite{KM}  and references therein). Since Hall-Littlewood $P$-polynomials and $Q$-polynomials
differ by the multiplication by an explicit constant, which is positive for $q>1$ (see
\cite[Section III.2]{M}) we can replace $P$ by $Q$ in any part of \eqref{eq_struct_const} and the
coefficients will be still positive. Moreover, \eqref{eq_struct_const} is equivalent to the
equality for skew Hall-Littlewood polynomials (see \cite[Section III.5]{M})
\begin{equation}
\label{eq_struct_for_skew}
 Q_{\nu/\mu} =\sum_\lambda c_{\lambda,\mu}^\nu Q_\lambda
\end{equation}
Again if we replace $Q$ with $P$ in either sides of \eqref{eq_struct_const} then the coefficients
 remain positive.

\medskip
Let us prove that for any sequences $r_i,c_i$, the values
$$\Sp_{r,c^{(q)},1}
[Q_\lambda(\cdot;q^{-1})]$$ are nonnegative, which will guarantee the non-negativity of
probabilities in \eqref{eq_cylindrical_prob_for_ergodic}.

First, let $r=(1,0,0,\dots)$, $c=(0,0,\dots)$. Then $\Sp_{r,c^{(q)},1} P_\lambda(\cdot; q^{-1})=0$
unless $\lambda$ is a one-row diagram, i.e. $\lambda_1=n$, $\lambda_2=0$. In the latter case
$\Sp_{r,c^{(q)},1} [P_\lambda(\cdot; q^{-1})]=1$. Thus, $\Sp_{r,c^{(q)},1} [Q_\lambda(\cdot;
q^{-1})]$ is non-negative for all $\lambda$.

\smallskip

Second, let $r=(0,0,\dots)$, $c=(1,0,0,\dots)$. The Cauchy identity for Hall-Littlewood
polynomials (see \cite[Section  III.4]{M}) yields (here $z$ and $y$ stay for two sets of
variables)
$$
 \sum_\lambda P_\lambda(z; q^{-1}) Q_\lambda(y; q^{-1}) = \exp\left(\sum_{m=1}^{\infty}
 \frac{1-q^{-m}}m p_m(z) p_m(y)\right).
$$
Applying $\Sp_{r,c^{(q)},1}$ with respect to the variables $y$ we get
\begin{multline*}
 \sum_\lambda P_\lambda(z; q^{-1}) \Sp_{r,c^{(q)},1}[Q_\lambda(y; q^{-1})] = \exp\left(\sum_{m=1}^{\infty}
 \frac{(-1)^{m-1}(1-q^{-1})^m}m p_m(z)\right)\\=\sum_{m\ge 0} (1-q^{-1})^m e_m(z)
\end{multline*}
 Since $P_\lambda=e_{|\lambda|}$ for one-column Young diagram $\lambda=(1,\dots,1)$, we
conclude that $\Sp_{r,c^{(q)},1}[Q_\lambda(y; q^{-1})]$ is zero unless $\lambda$ is a one-column
diagram. In the latter case this number is positive.

\smallskip

Third, if $r=c=(0,0,\dots)$, then $\Sp_{r,c^{(q)},1}[Q_\lambda(y; q^{-1})]$ is precisely the
coefficient of $p_1^{|\lambda|}$ in the decomposition of $Q_\lambda$ into the sum of products of
power sums $p_k$. These coefficients are known to be non-negative, see \cite[Exercise 4, Section
III.7]{M}.

\smallskip

Next, suppose that we have two specializations $\Sp_1$ and $\Sp_2$ of $\Lambda$ which map
Hall-Littlewood polynomials to non-negative numbers. Take two nonnegative numbers $a_1,a_2$ and
consider a new specialization $\Sp$, which we call \emph{mixing} of $\Sp_1$ and $\Sp_2$, given by
$$
 \Sp[p_k]=(a_1)^k \Sp_1[p_k]+(a_2)^k \Sp_2[p_k].
$$
We claim that the values of $\Sp$ on Hall-Littlewood polynomials are also nonnegative. Indeed,
this follows from the identity (see \cite[Section III.5]{M})
$$
 \Sp[Q_\lambda(\cdot; q^{-1})]=\sum_\mu (a_1)^{|\mu|}\Sp_1[Q_\mu(\cdot;q^{-1})]
 (a_2)^{|\lambda|-|\mu|} \Sp_2[Q_{\lambda/\mu}(\cdot; q^{-1})]
$$
and the positivity of the coefficients in \eqref{eq_struct_for_skew}.

Now observe that starting with three simplest specializations which we described above, one can
obtain any specialization $\Sp_{r,c^{(q)},1}$ with finitely many non-zero $r_i$s and $c_i$s
through mixing. Passing to the limit we conclude that $\Sp_{r,c^{(q)},1}[Q_\lambda]$ is
non-negative for all $\lambda$ and all sequences $r_i,c_i$ satisfying $\sum_i(r_i+c_i)\le 1$.

\bigskip

Next, let us show that the central probability measures on $U$ are in bijections with linear
functionals
$$
 \phi:\Lambda\to\mathbb C
$$
satisfying three coherency properties
\begin{enumerate}
 \item $\phi\left[Q_\lambda(\cdot;q^{-1})\right]\ge 0$ for every $\lambda$
 \item $\phi[p_1 f]=\phi[f]$ for any $f\in\Lambda$
 \item $\phi[1]=1$.
\end{enumerate}
The correspondence is pretty much given by formula \eqref{eq_cylindrical_prob_for_ergodic} and we
keep the same notations, i.e.\ given a measure $\vartheta$ the corresponding functional
$\phi_\vartheta$ is
 \begin{equation}
 \label{eq_functional_for_central_measure}
  \vartheta(\Cyl^\GLU_g)=\frac{q^{-n(n-1)/2}}{(1-q^{-1})^n}
  q^{n(\lambda)} \phi_\vartheta [Q_\lambda(\cdot;q^{-1})].
 \end{equation}
 The coherency properties 1. and 3. easily translate into the properties of a central probability measure. Let us deal with
 the coherency property 2.

 By the very definition, the cylindrical probabilities of measure $\vartheta$ should satisfy
 $$
  \vartheta(\Cyl^\GLU_g) =\sum_{h\in Ext^\GLU(g)} \vartheta(\Cyl^\GLU_h),
 $$
 where
 \begin{multline*}
 Ext^\GLU(g)=\biggl\{[h_{ij}]\in\GL{n+1}\mid\\ h^{(n)}=g\text{ and }
 h_{n+1,1}=h_{n+1,2}=\dots={h_{n+1,n}}=0, \, h_{n+1,n+1}=1\biggr\}
\end{multline*}
 is an analogue of $Ext^\GLB(g)$ of Section \ref{Section_AG_as_semisimple}.
 Let us divide $Ext^\GLU(g)$ into the groups having the same conjugacy class. We use the formula from \cite{Borodin_GL} which says that if conjugacy
 class of $g$ is given by the Young diagram $\lambda\in\Y_n$, then the number $N_{\lambda,\mu}$ of $h\in Ext^\GLU(g)$
 belonging to the conjugacy class given by the Young diagram $\mu\in Y_{n+1}$ is
 $$
  N_{\lambda,\mu}=\begin{cases} q^n q^{-\lambda'_j}(1-q^{\lambda'_j-\lambda'_{j-1}}), \text{ if }
  \mu\setminus\lambda=\square_j,\\
  0,\text{ otherwise.}
                  \end{cases}
 $$
 Here $\mu\setminus\lambda=\square_j$ means that the set-theoretical difference of the Young diagrams
 $\mu$ and $\lambda$ is a box in column $j$ and we agree that $\lambda_0=+\infty$, i.e. $q^{\lambda'_1-\lambda'_{0}}=0$.

 Therefore, the functional $\phi_\vartheta$ defined through
 \eqref{eq_functional_for_central_measure} satisfies
 \begin{equation}
 \label{eq_phi_recurrence}
  \phi_\vartheta \left[Q_\lambda(\cdot;q^{-1})\right]=\sum_{\mu\in\Y_{n+1}}
  \phi_\vartheta\left[Q_\mu(\cdot;q^{-1})\right]
  N_{\lambda,\mu} q^{n(\mu)-n(\lambda)}\frac{q^{-n}}{1-q^{-1}}
 \end{equation}
 In the same time Pierry rules for the Hall-Littlewood polynomials (see \cite[Section III.5]{M})
 yield
 $$
  (1-q^{-1})Q_\lambda(\cdot;q^{-1}) p_1 =\sum_{\mu\in Y_{n+1}: \mu\setminus\lambda=\square}
  (1-q^{\lambda'_j-\lambda'_{j-1}})Q_\mu(\cdot;q^{-1}),
 $$
 where $j$ is again the column of the box $\mu\setminus\lambda$. Therefore,
 \eqref{eq_phi_recurrence} is equivalent to
 $$
   \phi(\vartheta)\left[Q_\lambda(\cdot; q^{-1})\right]= \phi(\vartheta)\left[p_1 Q_\lambda(\cdot; q^{-1})\right].
 $$
 Since the latter equality holds for every $\lambda$ and Hall-Littlewood polynomials $Q_\lambda(\cdot;
 q^{-1})$ form a linear basis of $\Lambda$, we arrive at the coherency property 2.

 \bigskip

 Now we are in position to use the so-called \emph{Ring Theorem} (see \cite{VK_ring}, \cite{VK_Long}, \cite{Kerov_book} and also \cite[Section
8.7]{GO_Ring}). This theorem yields, that the extreme points of the convex set of
$Q_\lambda$--positive functionals on $\Lambda$ are those functional which are multiplicative
(i.e.\ are algebra homomorphism). By the very definition the functionals $\Sp_{r,c^{(q)},1}$ are
  multiplicative, they also satisfy the coherency properties. We conclude that these functionals
  are extreme and, thus, the corresponding measures $\vartheta^{r,c}$ are indeed ergodic
  central measures on $\U$.
\end{proof}

\begin{proposition}
\label{prop_ergodic_measures_as_subset}
 If $\vartheta$ is an ergodic central measure on $\U$, then there exist sequences $\alpha$, $\beta$
 satisfying \eqref{eq_summable_sequence} with $\gamma=1$ such that the measure $\vartheta$
  has the following cylindrical
 probabilities:
 $$
  \vartheta^{\alpha,\beta}(\Cyl^\GLU_g)=\frac{q^{-n(n-1)/2}}{(1-q^{-1})^n}
  q^{n(\lambda)} \Sp_{\alpha,\beta,1}[ Q_\lambda(\cdot;q^{-1})]
 $$
 where $g\in\GL{n}$ is a unipotent upper-triangular matrix corresponding to the conjugacy class
 $\lambda$.
\end{proposition}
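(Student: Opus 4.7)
The plan is to use the affine bijection between central probability measures on $\U$ and coherent functionals on $\Lambda$ established in the proof of Proposition \ref{prop_list_of_ergodic}, combined with the Ring Theorem and the classification of extreme unipotent traces in Theorem \ref{theorem_characters_of_GLB}. That bijection sends $\vartheta$ to a functional $\phi_\vartheta \colon \Lambda \to \mathbb{C}$ satisfying three coherency properties: Hall--Littlewood positivity $\phi[Q_\lambda(\cdot; q^{-1})] \ge 0$, $p_1$-coherence $\phi[p_1 f] = \phi[f]$, and normalization $\phi[1] = 1$. Since the bijection is affine, ergodic measures correspond to extreme functionals, and by the Ring Theorem (as applied at the end of the proof of Proposition \ref{prop_list_of_ergodic}) these extreme functionals are exactly the algebra homomorphisms. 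The task is therefore to show that every multiplicative $\phi$ obeying these three conditions arises as $\Sp_{\alpha, \beta, 1}$ for some pair $(\alpha, \beta)$ of non-increasing non-negative sequences with $\sum_i(\alpha_i + \beta_i) \le 1$.

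To this end I would associate to $\vartheta$ a candidate unipotent trace $\chi_\vartheta$ of $\A(\GLU)$ by setting $\chi_\vartheta(I^\GLU_g) := \vartheta(\Cyl^\GLU_g)$ for unipotent upper-triangular $g$, extending by zero on other cylinder functions and then linearly. Centrality follows from conjugation-invariance of $\vartheta$. For positive-definiteness, restrict $\chi_\vartheta$ to $\A(\GLU)_n \simeq \mathbb{C}(\GL{n})$ and expand in the basis of matrix traces of irreducible unipotent characters $\chi^\mu$, using the identity $\chi^\mu(g_\lambda) = q^{n(\lambda)} K_{\mu, \lambda}(q^{-1})$ recalled in the proof of Theorem \ref{theorem_character_at_unipotent_class}. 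The coefficients of this expansion come out as non-negative linear combinations of the values $\phi_\vartheta[Q_\lambda(\cdot; q^{-1})]$, with weights given by unipotent conjugacy class sizes and Kostka--Foulkes polynomials evaluated at $t = q^{-1}$; both families are non-negative, so the coefficients are non-negative and $\chi_\vartheta$ is a genuine unipotent trace. Theorem \ref{theorem_characters_of_GLB} restricted to $f = \emptyset$ then parameterizes the extreme such traces by pairs $(\alpha_0, \beta_0)$ of non-increasing non-negative sequences with $\sum_i(\alpha_{0,i} + \beta_{0,i}) \le 1$.

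To conclude, evaluate the extreme trace with parameters $(\alpha_0, \beta_0)$ at the unipotent class $\lambda$ via Theorem \ref{theorem_character_at_unipotent_class}, and then apply the identity $\Sp_{\alpha_0, \beta_0, 1}[\widetilde Q_\lambda(\cdot; q^{-1})] = (1 - q^{-1})^{-|\lambda|} \Sp_{\alpha_0^{(q)}, \beta_0^{(q)}, 1}[Q_\lambda(\cdot; q^{-1})]$ derived in the proof of Theorem \ref{Theorem_prob_measure_for_character}. This yields the claimed cylindrical formula with $(\alpha, \beta) = (\alpha_0^{(q)}, \beta_0^{(q)})$; these transformed sequences satisfy $\sum_i(\alpha_i + \beta_i) = \sum_i(\alpha_{0,i} + \beta_{0,i}) \le 1$ since the $(q)$-rearrangement preserves the total sum. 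The main obstacle is the positive-definiteness verification: although the non-negativity of the Hall--Littlewood functional values and of Kostka--Foulkes polynomials at $t = q^{-1}$ makes the sign structure clear, the argument requires careful bookkeeping of the normalization constants---the factor relating $I^\GLU_g$ to the group-algebra generator $e_g \in \mathbb{C}(\GL{n})$, the unipotent cylinder sizes, and the $q^{n(\lambda)}$ weight appearing in Theorem \ref{theorem_character_at_unipotent_class}---to confirm that Hall--Littlewood positivity of $\phi_\vartheta$ translates faithfully into non-negativity of the expansion coefficients of $\chi_\vartheta$ at every level $n$.
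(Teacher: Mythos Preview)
Your approach has a genuine gap in the construction of $\chi_\vartheta$. A central function on $\GL{n}$ that equals $\vartheta(\Cyl^\GLU_g)$ on unipotent classes and \emph{zero} on all non-unipotent classes cannot be a linear combination of unipotent characters alone: unipotent characters are supported on the whole group (for instance, the trivial character takes the value $1$ everywhere), so any non-trivial combination of them has non-zero values on many non-unipotent classes. Consequently your expansion ``in the basis of matrix traces of irreducible unipotent characters'' is not an expansion at all---the full character decomposition of your $\chi_\vartheta$ necessarily involves non-unipotent $\chi^f$, and your orthogonality computation only bounds the unipotent coefficients. Without controlling the sign of the remaining coefficients you cannot conclude positive-definiteness, and without that you cannot invoke Theorem~\ref{theorem_characters_of_GLB}.

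The paper's argument bypasses traces entirely and is much shorter. Having already identified ergodic central measures with multiplicative coherent functionals $\phi_\vartheta$ on $\Lambda$ (via the Ring Theorem, exactly as you begin), one simply observes that the expansion $s_\lambda=\sum_\mu c_{\lambda,\mu}\,Q_\mu(\cdot;q^{-1})$ has non-negative coefficients (they are $q$-Kostka numbers up to positive constants). Hence $\phi_\vartheta[Q_\mu]\ge 0$ for all $\mu$ forces $\phi_\vartheta[s_\lambda]\ge 0$ for all $\lambda$. Now Thoma's theorem, applied directly to the multiplicative Schur-positive functional $\phi_\vartheta$, yields $\phi_\vartheta=\Sp_{\alpha,\beta,1}$ for some admissible $(\alpha,\beta)$, and the cylindrical formula follows from \eqref{eq_functional_for_central_measure}. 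No detour through the trace classification or the $(q)$-rearrangement is needed.
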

\noindent {\bf Remark.} Thus, to prove that the measures $\vartheta^{r,c}$ exhaust the list of
ergodic central measures it remains to show that if the sequence $\beta$ is not a union of
geometric series (with denominator $q^{-1}$), then $\Sp_{\alpha,\beta,1}\left[
Q_\lambda(\cdot;q^{-1})\right]<0$ for some $\lambda$.
\begin{proof}[Proof of Proposition \ref{prop_ergodic_measures_as_subset}]
As in the proof of Proposition \ref{prop_list_of_ergodic} we identify ergodic central measures on
$\U$ with multiplicative functionals on $\Lambda$ satisfying three coherency properties. Observe
that the coefficients of the decomposition
$$
 s_\lambda(\cdot)=\sum_{\mu} c_{\lambda,\mu} Q_{\lambda}(\cdot;q^{-1})
$$
are non-negative. Indeed, up to the simple constants they coincide with $q$-Kostka numbers (see
\cite[Section III.6]{M}).

Therefore any multiplicative functional $\phi$ satisfying three coherence properties also satisfy
$$
 \phi[s_\lambda]\ge 0.
$$
But classification of the multiplicative functionals which are non-negative on Schur functions is
well-known. It is equivalent to the Thoma theorem on the characters of infinite symmetric group
$S(\infty)$, see \cite{Thoma1}, \cite{VK_S}, \cite{VK_Long}, \cite{Kerov_book} . The list of the
functionals is given by $\Sp_{\alpha,\beta,1}$ with $\alpha,\beta$ satisfying
\eqref{eq_summable_sequence} with $\gamma=1$.
\end{proof}

\section{Grouppoid construction for the representations of $\GLB$}
\label{Section_reps}

In this section we give an explicit construction for the representations of $\GLB$ corresponding
to a large class of the extreme unipotent traces of $\A(\GLB)$.

\subsection{Generalities}

Generally speaking, we are going to construct irreducible \emph{generalized spherical
representations} of pair $(\GLB\times\GLB,\GLB)$. Let us introduce some definitions first.

The well-known principle (see e.g.\ \cite[Section 13]{Dixmier}) identifies unitary representations
of a locally-compact group $G$ with $*$--representations of $L_1(G)$ (with respect to Haar
measure) and we will silently use this identification where it leads to no confusions.

\begin{definition}
A generalized spherical representation of $(\GLB\times\GLB,\GLB)$ is a triplet:
\begin{enumerate}
\item Unitary (continuous) representation $\pi$ in a Hilbert space $H$ with scalar product $\langle \cdot,\cdot \rangle$:
$$
 \pi:\GLB\times\GLB \to U(H),
$$
\item Dense subspace $H_1\subset H$ equipped with a norm $|\cdot|$,
\item Linear functional (distribution) $v\in H_1'$
\end{enumerate}
Satisfying the following conditions:
\begin{enumerate}
\item The inclusion $i:(H_1,|\cdot|)\to (H,\langle \cdot,\cdot \rangle)$ is continuous,
\item For any $(g,h) \in \GLB\times \GLB$, $\pi(g,h) H_1\subset H_1$
\item For any $a\in\A({\GLB})$ we have $\pi(a,e)v\in H_1$ and $\pi(e,a)v\in H_1$. In other words, there
exists $w(a)$ such that $\langle v, \pi(a,e) x\rangle=\langle w(a),x \rangle$ for every $x\in
H_1$, and similarly for $(e,a)$.
\item The span of $\{\pi(a,b) v\mid a,b\in \A(\GLB)\}$ is  dense in $H$
\item For any $g\in\GLB$, we have $\pi(g,g) v= v$.
\item $(\pi(I_\B) v,v)=1$.
\end{enumerate}
\end{definition}

This definition is just a generalization of the well-known definition of a spherical
representations of the Gelfand pair. The theory of spherical representations for the infinite
symmetric group $S(\infty)$ and infinite-dimensional unitary group $U(\infty)$ was developed by
G.Olshanski and his collaborators, see \cite{Olsh_Howe_short}, \cite{Olsh_Howe}, \cite{Olsh_S}.
The novelty in the present paper is the fact that the distinguished vector $v$ no longer belongs
to the Hilbert space, but becomes a distribution.

\begin{definition}
A spherical function of  a generalized spherical representation $(\pi, H_1, v)$  is defined as
$$
  \chi(a,b)=\langle\pi(a,b)v,v\rangle,
$$
where one of the variables $a$ or $b$ should belong to $\A(\GLB)$ (and other might be either an
element of $\A(\GLB)$ or, more generally, an element of $\GLB$).
\end{definition}
Clearly, the restriction of the spherical function to its first coordinate (i.e.\ to pairs $(a,e)$
) gives a trace of $\A(\GLB)$.

The converse is also true, i.e.\ given a trace of $\A(\GLB)$ we can, in principle, construct the
corresponding representation (using a version of the Gelfand-Naimark-Segal construction). However,
the general construction is quite abstract and we seek for an explicit description of the
representations corresponding to the unipotent  traces $\chi^\omega$.

We could avoid the notion of a general spherical representation and use von Neuman factors
instead, however, our approach seems to show more hidden structure. The following simple
proposition explains how to pass to the factor-representations.

\begin{proposition}
 Let $(\pi,H_1,v)$ be a generalized spherical representation such that the corresponding traces
  $\chi$ of $\A(\GLB)$ is extreme. The restriction of $\pi$ on the first coordinate is von Neumann semifinite (i.e.\
 either type $I$ or type $II$) factor representation of group $\GLB$ in the cyclic span of $v$.
\end{proposition}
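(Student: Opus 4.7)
The plan is to construct a faithful normal semifinite trace on the von Neumann algebra generated by the first-coordinate action, and then use extremality of $\chi$ to preclude nontrivial central projections. Let $\mathcal{M}$ denote the von Neumann algebra generated by $\pi(\GLB\times\{e\})$ acting on the cyclic hull of $v$ (which by condition 4 is $H$ itself, or otherwise its closure). Because $\GLB\times\GLB$ is a direct product, the commutativity $\pi(\{e\}\times\GLB)\subset \mathcal{M}'$ is automatic; the diagonal invariance $\pi(g,g)v=v$ supplies the key link between $\mathcal{M}$ and $\mathcal{M}'$ through $v$.

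The next step is to define $\tau$ on the weakly dense $*$-subalgebra $\pi(\A(\GLB),e)\subset \mathcal{M}$ by $\tau(\pi(a,e))=\chi(a)$, using the pairing of $v$ (as a distribution) with $\pi(a,e)v\in H_1$. Centrality of $\chi$ on $\A(\GLB)$ translates directly into $\tau(AB)=\tau(BA)$, and positive-definiteness gives $\tau(T^{*}T)\ge 0$. The normalization $\chi(I_{\B})=1$ together with the fact that $I_{\B}\in\A(\GLB)$ acts (up to a normalizing constant) as a projection provides a finite-trace element that anchors the construction. I would then extend $\tau$ to a faithful normal semifinite trace on $\mathcal{M}$ by the usual procedure: first obtain a finite trace on the corner $\pi(I_{\B},e)\mathcal{M}\pi(I_{\B},e)$, then propagate it through translates by group elements $\pi(g,e)$, whose union is strongly dense in $\mathcal{M}$.

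With $\tau$ in hand, the factor property follows from extremality. Suppose $p\in Z(\mathcal{M})=\mathcal{M}\cap\mathcal{M}'$ is a central projection with $0<p<I$. Then
$$\chi_1(a)=\langle p\,\pi(a,e)v,v\rangle,\qquad \chi_2(a)=\langle (I-p)\pi(a,e)v,v\rangle$$
are positive, central linear functionals on $\A(\GLB)$ summing to $\chi$, and neither is a scalar multiple of $\chi$ because $p$ is a nontrivial central projection. This contradicts extremality of $\chi$, so $Z(\mathcal{M})=\mathbb{C}\cdot I$. Semifiniteness is then automatic: the existence of a faithful normal semifinite trace on $\mathcal{M}$ excludes type $III$, so $\mathcal{M}$ must be of type $I$ or $II$.

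The main obstacle is the careful extension of $\tau$ from the $*$-algebra $\pi(\A(\GLB),e)$ to a faithful normal semifinite trace on all of $\mathcal{M}$, because $v$ is only a distribution on $H_1$ rather than a vector in $H$ (so the naive formula $T\mapsto\langle Tv,v\rangle$ does not even make sense for arbitrary $T\in\mathcal{M}$). One has to work systematically inside corners $\pi(I_{g\B g^{-1}},e)\mathcal{M}\pi(I_{g\B g^{-1}},e)$ where $v$ pairs meaningfully, and check compatibility of the resulting partial traces. Once this technical extension is verified, both the factor property and semifiniteness follow quickly from the extremality of $\chi$ and the presence of the trace.
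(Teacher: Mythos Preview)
Your approach is essentially the same as the paper's: extend $\chi$ to a semifinite trace on the von Neumann algebra $\mathcal{M}$ generated by $\pi(\GLB,e)$, then use extremality of $\chi$ to rule out nontrivial central projections. The paper's proof is much terser---it simply asserts that the extension $\chi'$ exists, is semifinite, and that any decomposition $\chi'=\chi'_1+\chi'_2$ restricts to a decomposition of $\chi$---whereas you supply the extension mechanism via corners $\pi(I_\B,e)\mathcal{M}\pi(I_\B,e)$ and correctly flag it as the main technical point.

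One small caveat: in your factor argument the expression $\langle p\,\pi(a,e)v,v\rangle$ is not a priori well-defined, since $v\in H_1'$ and there is no reason a general central projection $p\in\mathcal{M}$ should map $H_1$ into $H_1$. But since by that stage you have already extended $\tau$ to a normal semifinite trace on $\mathcal{M}$, you can simply set $\chi_1(a)=\tau\bigl(p\,\pi(a,e)\bigr)$ and $\chi_2(a)=\tau\bigl((I-p)\pi(a,e)\bigr)$ instead; the rest of the argument then goes through unchanged.
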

\begin{proof}
 Let $\mathcal V\subset \mathcal B(
 H)$ denote the minimal von Neumann algebra containing all operators $\pi(g,e)$,
 $g\in\GLB$. Let $\chi'$ denote the (unique) extension of the  trace $\chi$ of $\A(\GLB)$
 on $\mathcal V$. Clearly, $\chi'$ is a semifinite trace of
 $\mathcal V$. Note that $\chi'$ is extreme. Indeed, if $\chi'=\chi'_1+\chi'_2$, then $\chi$ has a similar decomposition
  and we get a contradiction with extremality of $\chi$. Extremality of $\chi'$ implies that $\mathcal V$ is a
 von Neumann factor.
\end{proof}

\noindent {\bf Remark.} As we will see below, both type $I$ and type $II$ factor representations
arise.

\subsection{Two simplest type $I$ examples}

\label{subsect_single_param_reps}

Recall that unipotent representations are parameterized by two sequences
$\alpha_1\ge\alpha_2\ge\dots\ge$ and $\beta_1\ge\beta_2\ge\dots\ge 0$ such that
$\sum_i(\alpha_i+\beta_i)\le 1$. We start from considering some simplest cases.

First, suppose that $\alpha_1=1$ with all other parameters being zeros. In this case the desired
representation is just the identity representation. I.e.\ $H$ is 1-dimensional vector space,
$H_1=H$, $\pi$ maps all elements of $\GLB\times\GLB$ to identity operator and $v$ is a unit vector
in $H$.

Next, let $\beta_1=1$, and let all other parameters be zeros.

Let $St_n$ be the \emph{Steinberg} representation of $\GL{n}$ (see \cite{Steinberg}, \cite{Hum}).
This representation can be realized as the left representation of $\GL{n}$ in the right ideal of
$\mathbb C(\GL{n})$ spanned by the element
$$
  s=\sum_{\sigma\in \Sym_n} (-1)^{\sigma} e_\sigma \sum_{g\in \B_n} e_g
$$
It is well known that a linear basis of $H(St_n)$ can be chosen to be
$$\{e_g s\mid g\in \U_n\},$$
where $\U_n$ is a subgroup of unipotent upper triangular matrices in $\GL{n}$. The dimension of
$H(St_n)$ is $q^{n(n-1)/2}$.

The representation $\St_{n-1}$ of $\A(\GLB)_{n-1}$ is naturally included into the representation
$\St_n$ of $\A(\GLB)_n$ as the subspace of $U_n^n$-invariant vectors (see Theorem
\ref{Theorem_parabolic_restriction} for the definition of the group $U_n^n$). Let $\St_\infty^0$
denote the inductive limit of the representations $\St_n$ with respect to the above embeddings.
Note that each $H(\St_n)$ has a (unique up to a multiplication by a constant) $\GL{n}$--invariant
scalar product and these scalar products can be choosen to agree with the above embeddings. Thus,
the space $H(\St^0_\infty)$ is equipped with a scalar product. Let $H(\St_\infty)$ denote
$*$-representation of $\A$ in the completion of the space $H(\St^0_\infty)$.

Now let $\mathcal H$ denote the Hilbert space $H(\St_\infty)^*\otimes H(\St_\infty)$ of
Hilbert-Schmidt operators in $H(\St_\infty)$. We have a natural $*$-representation $\pi$ of
$\A\times \A$ in $\mathcal H$. It can be extended to a non-degenerate representation of
$L_1(\GLB)\otimes L_1(\GLB)$ and, thus, to a unitary representation of $\GLB\times\GLB$, which we
denote by the same letter $\pi$. Let $\mathcal H_1\subset \mathcal H$ be the subspace of
trace-class operators and let the functional $v\in\mathcal H_1'$ be trace. (If we identify $H_1'$
with the space $\mathcal B(\mathcal H)$ of bounded linear operators, then $v$ corresponds to the
identity operator.) Note that if $a\in\A_n$, then the image of the operator $\St_\infty(a)$ lies
in $H(\St_n)$. Therefore, $\St_\infty(a)$ has finite rank. It follows that $\pi((a,b))v\in
\mathcal H_1$ for any $(a,b)\in\A\times \A$. All other properties are trivial and we conclude that
$(\mathcal H, \mathcal H_1, v)$ is a generalized spherical representation. One immediately checks
that the spherical function of this representation corresponds to the trace with $\beta_1=1$ and
all other parameters being zeros.

\subsection{Representations related to grassmanian}
\label{Subsect_grassman}

We next proceed to the construction of the representation with $\alpha_1=t_1$, $\alpha_2=t_2$,
$t_1+t_2=1$. Our construction has lots of similarities with \emph{grouppoid construction} of
\cite{VK_S} for the realization of factor representation of the infinite symmetric group
$S(\infty)$.

Let $V$ be the infinite-dimensional linear space over $\F$ with basis $e_1,e_2,\dots$. Denote
$V_i=\langle e_1,\dots,e_i \rangle$. In what follows we use an infinite-dimensional analogue of
the well-known decomposition of grassmanian into Schubert cells.

\begin{definition}
For a subspace $X$ of $V$ with $d_i=\dim(X\bigcap V_i)$, the \emph{symbol} of $X$ is the $0-1$
sequence $d_{i}-d_{i-1}$, where we agree that $d_0=0$:
$$
 Sym(X):=(d_1-d_0, d_2-d_1,\dots).
$$
\end{definition}

\begin{definition} For a $0-1$ sequence $x$ let \emph{Schubert cell of} $x$ denote the set of all subspaces
with symbol $x$:
$$
 Sch(x)=\{X\subset V\mid Sym(X)=x\}.
$$
\end{definition}
We fix a distinguished \emph{coordinate} subspace in $Sch(x)$ which is
$$
C(x)=\langle e_i \mid x_i=1\rangle.
$$

In the same way if $X$ is a subspace of $V_n$, then its $n$-dimensional symbol $Sym^n(X)$ is the
$0-1$ sequence $(d_1-d_0,\dots,d_n-d_{n-1})$ of length $n$. For a $0-1$ sequence $(x_1,\dots,x_n)$
we define a finite Schubert cell
$$
Sch^n(x)=\{X\subset V_n\mid Sym^n(X)=x\}.
$$
By a simple linear algebra we have
\begin{equation} |Sch^n(x)|=q^{\sum_{i=1}^n (i x_i) -
m(m+1)/2}, \label{eq_Schubert_size}
\end{equation}
where $m=\sum_{i=1}^n x_i$.

\noindent {\bf Remark.} Another way to rewrite \eqref{eq_Schubert_size} is
$$
|Sch^n(x)|=q^{{\rm inv}(-x)},
$$
where ${\rm inv}(-x)$ is the number of \emph{inversions} in $-x$. In other words, it is the number
of pairs $i<j$ such that $x_i<x_j$. Similar formula still holds when we pass from grassmanian to
more complicated flag varieties. This makes a link to $q$--exchangeability and
$\GL{\infty}$--invariant measures on flags of \cite{GO_q}.

\smallskip

Let $\nu_x^n$ denote the uniform probability measure on the finite set $Sch^n(x)$.  Thus, for a
subspace $X$ in $V_n$ we have
$$
 \nu_x^n(X)=\begin{cases}
     q^{m(m+1)/2 -\sum_{i=1}^n (i x_i)}, \quad X\in Sch^n(x),\\
     0,\text{ otherwise.}
   \end{cases}
$$

For a space $W$ ($W$ will be either $V$ of $V_n$) let $Gr(W)$ be the set of all subspaces of $W$.
Note that $\GLB$ naturally acts in $Gr(V)$. We equip $Gr(V)$ with a topology of $\GLB$--space
(i.e.\ elementary open neighborhood of a point $x$ is the image of the action on $x$ of an open
neighborhood of identity element in $\GLB$) and corresponding $\sigma$--algebra of Borel sets.
$Gr(V)$ is a union of Schubert cells, every cell is a measurable subset of $Gr(V)$ and is a
$\B$-orbit. Let $\pi_x$ be the map:
$$
 \pi_x: \B \to Gr(V),\quad \pi(g)=g C(x).
$$
Let measure $\nu_x$ be the image of the Haar measure on $\B$ with respect to $\pi_x$. By its
definition $\nu_x$ is a unique $\B$-invariant probability measure supported on $Sch(x)$.

Let $\pi^{(n)}$ be the projection
$$
 \Pi^{(n)}: Gr(V)\to Gr(V_n),\quad \pi^{(n)}(X)=X\bigcap V_n,
$$
then the image of $\nu_x$ with respect to the map $\pi^{(n)}$ is precisely the uniform probability
measure $\nu_{(x_1,\dots,x_n)}^n$ on the finite Schubert cell $Sch^n((x_1,\dots,x_n))\subset
Gr(V_n)$.

Let us introduce an important probability measure $\eta_{t_1,t_2}$ on $Gr(V)$. Let $\phi$ denote
the map
$$
 \phi: \{0,1\}^{\infty}\times \B \to Gr(V),\quad (x,g)\to g C(x).
$$
\begin{definition}
The measure $\eta_{t_1,t_2}$ is the $\phi$-pushforward of the product of Bernoulli measure with
probability of $1$ being $t_1$, and Haar measure $\mu_\B$ on $\B$. In other words, to get a random
element of $Gr(V)$ distributed according to the measure $\eta_{t_1,t_2}$ we, first, sample a $0-1$
sequence $x$ from the Bernoulli measure and then take an element of $Sch(x)$ distributed according
to $\nu_x$.
\end{definition}
We also let $\eta_{t_1,t_2}^n$ be the $\Pi^{(n)}$ pushforward of $\eta_{t_1,t_2}$. Our definitions
imply that for $X\in Gr(V_n)$ with symbol $(x_1,\dots,x_n)$ we have
\begin{equation}
\label{eq_quasiinv_eta_finite}
 \eta_{t_1,t_2}^n (X)=q^{m(m+1)/2 -\sum_{i=1}^n (i x_i)} t_1^{\sum_i x_i} t_2^{n-\sum_i x_i}.
\end{equation}

The following two propositions explain the relation between $\eta_{t_1,t_2}$ and action of $\GLB$.

\begin{proposition}[Fundamental cocycle of the action on grassmanian]
\label{prop_eta_quasiinvar} The measure $\eta_{t_1,t_2}$ is quasi-invariant with respect to the
action of $\GLB$. The cocycle of the action of $\GLB$ is given by
$$
 \frac{\eta_{t_1,t_2}(g \cdot dX)}{\eta_{t_1,t_2}(dX)}=q^{\sum_k
 k(Sym(X)_k-Sym(gX)_k)}.
$$
\end{proposition}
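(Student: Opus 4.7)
The plan is to reduce quasi-invariance of $\eta_{t_1,t_2}$ to a finite-dimensional statement via the inductive limit $\GLB=\bigcup_n \GLB_n$ and the coherent projections $\Pi^{(m)}:Gr(V)\to Gr(V_m)$. Since the Borel $\sigma$-algebra of $Gr(V)$ is generated by cylinders $C_Y=(\Pi^{(m)})^{-1}(Y)$ for $Y\in Gr(V_m)$, and the restriction of $\eta_{t_1,t_2}$ to such cylinders is given by the explicit weights \eqref{eq_quasiinv_eta_finite}, it suffices to verify the Radon--Nikodym identity on these generating sets and to identify the resulting density with the proposed cocycle.

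The essential geometric input is the stabilization property: for any $g\in\GLB_n$ and every $m\ge n$, one has $gV_m=V_m$. This is immediate from the defining vanishing in $\GLB_n$: for $k\le m$ and $i>m\ge n$ one has $i>k$, hence $g_{ik}=0$ and $ge_k\in V_m$. Consequently, for any $X\in Gr(V)$ and every $k\ge n$, the equality $gX\cap V_k=g(X\cap V_k)$ forces $Sym(gX)_k=Sym(X)_k$. Hence the proposed cocycle $c_g(X):=q^{\sum_k k(Sym(X)_k-Sym(gX)_k)}$ is a finite product supported on indices $k\le n$, is locally constant in $X$, and is in fact constant on every cylinder $C_Y$ with $m\ge n$.

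The computation on a cylinder is now direct. The identity $gV_m=V_m$ implies that $Y\mapsto gY$ is a bijection of $Gr(V_m)$ and that $gC_Y=C_{gY}$, so $\eta_{t_1,t_2}(gC_Y)=\eta^m_{t_1,t_2}(gY)$ and $\eta_{t_1,t_2}(C_Y)=\eta^m_{t_1,t_2}(Y)$. Substituting \eqref{eq_quasiinv_eta_finite} and using $\dim gY=\dim Y$, the factors $t_1^{\dim Y}t_2^{m-\dim Y}$ and $q^{\dim Y(\dim Y+1)/2}$ cancel between numerator and denominator, leaving
$$
\frac{\eta^m_{t_1,t_2}(gY)}{\eta^m_{t_1,t_2}(Y)}=q^{\sum_{i=1}^m i(Sym(X)_i-Sym(gX)_i)}=c_g(X)
$$
for any $X\in C_Y$, where the last equality extends the sum from $i\le m$ to all $i$ using the stabilization. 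A standard $\pi$--$\lambda$ argument then promotes this pointwise identity on cylinders to the Radon--Nikodym identity on all Borel sets, establishing the proposition.

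There is no serious obstacle here: once the geometric input $gV_m=V_m$ is in hand the computation is algebraically transparent, and the $t_1,t_2$ cancellations are automatic because the $g$-action preserves both $\dim Y$ and the ambient $V_m$. The one conceptual point worth emphasising is that writing the cocycle as an a priori infinite sum is legitimate precisely because $Sym(\cdot)_k$ stabilises beyond position $n$; this feature (which fails for the larger group $\GL{\infty}$ acting by unbounded limits) is exactly what makes the analysis on $\GLB$ go through cleanly.
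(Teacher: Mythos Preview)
Your proof is correct and follows essentially the same approach as the paper's: reduce to the finite-dimensional formula \eqref{eq_quasiinv_eta_finite} via the fact that elements of $\GLB_n$ stabilise $V_m$ for $m\ge n$ (the paper phrases this as $\B$-invariance plus reduction to $g\in\GL{n}$, which is equivalent). One cosmetic slip: the implication ``$gX\cap V_k=g(X\cap V_k)\Rightarrow Sym(gX)_k=Sym(X)_k$'' requires $k-1\ge n$ as well, so the stabilisation of symbols holds for $k>n$ rather than $k\ge n$; your conclusion that the cocycle is supported on $k\le n$ is unaffected.
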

\begin{proof}
 By the definition $\eta_{t_1,t_2}$ is $\B$--invariant. Thus, it remains to consider $g\in\GL{n}$
 for arbitrary $n$. But then the computation of the cocycle of $\eta_{t_1,t_2}$ boils down to the
 computation for $\eta^{n}_{t_1,t_2}$ which is straightforward from \eqref{eq_quasiinv_eta_finite}.
\end{proof}

\begin{proposition}
\label{Prop_no_equiv}
 If $t_1$ and $t_2$ are nonzero, then there is no finite or $\sigma$--finite $\GLB$--invariant measure on $Gr(V)$ equivalent (i.e.\
 with the same sets of measure zero) to $\eta_{t_1,t_2}$.
\end{proposition}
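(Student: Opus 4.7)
The plan is to argue by contradiction, exploiting that while the cocycle of $\eta_{t_1,t_2}$ under coordinate permutations is nontrivial, the Bernoulli measure governing the symbols is itself permutation-invariant.

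Suppose $\mu$ is a $\sigma$-finite $\GLB$-invariant measure equivalent to $\eta_{t_1,t_2}$ and write $\mu = f\cdot\eta_{t_1,t_2}$ with $f>0$ almost everywhere. Combining $\GLB$-invariance of $\mu$ with the cocycle formula of Proposition \ref{prop_eta_quasiinvar} forces, for every $g\in\GLB$,
$$
f(gX)\cdot q^{\sum_k k\bigl(Sym(X)_k-Sym(gX)_k\bigr)} \;=\; f(X) \qquad \eta_{t_1,t_2}\text{-a.e.}
$$
Specializing to $g\in\B$, where the cocycle equals $1$, shows that $f$ is $\B$-invariant almost everywhere. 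Disintegrating $\eta_{t_1,t_2}$ along the symbol map $X\mapsto Sym(X)$ --- whose base measure is the Bernoulli measure $\mathrm{Bern}_{t_1,t_2}$ on $\{0,1\}^\infty$ and whose conditional measures are the $\B$-invariant probabilities $\nu_x$ on the Schubert cells $Sch(x)$ --- and using that $\B$ acts transitively on each $Sch(x)$, I obtain a measurable $F:\{0,1\}^\infty\to\mathbb R_{>0}$ with $f=F\circ Sym$ almost everywhere and $F>0$ $\mathrm{Bern}_{t_1,t_2}$-a.e.

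Now specialize to $\sigma\in\GLB$, the permutation matrix realizing the transposition $(1,2)$ (which lies in $\GLB_2\subset\GLB$). Since $Sym(\sigma X)=(x_2,x_1,x_3,\dots)$ when $x=Sym(X)$, the cocycle identity reads
$$
F(\sigma x) \;=\; F(x)\cdot q^{x_1-x_2} \qquad \mathrm{Bern}_{t_1,t_2}\text{-a.e.}
$$
On the other hand, push the invariance $\mu(\sigma A)=\mu(A)$ forward through $Sym$: since $\mathrm{Bern}_{t_1,t_2}$ is itself invariant under finite permutations of coordinates, the change of variables $y=\sigma z$ gives $\int_A F(z)\,d\mathrm{Bern}_{t_1,t_2}(z)=\int_A F(\sigma z)\,d\mathrm{Bern}_{t_1,t_2}(z)$ for every measurable $A\subset\{0,1\}^\infty$, hence $F(\sigma z)=F(z)$ almost everywhere as well.

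Combining the two identities yields $F(x)\bigl(q^{x_1-x_2}-1\bigr)=0$ $\mathrm{Bern}_{t_1,t_2}$-a.e. On the cylinder $\{x_1\ne x_2\}$, which has Bernoulli measure $2t_1t_2>0$ by the hypothesis $t_1,t_2\ne 0$, the factor $q^{x_1-x_2}-1$ is nonzero (as $q\ge 2$), so $F=0$ there; this contradicts $F>0$ almost everywhere. The only step requiring technical care is the descent of $f$ to $F$ in the second paragraph: one fixes a jointly measurable version of $f$ and applies a Fubini argument together with the transitive (hence ergodic) action of $\B$ on each Schubert cell with invariant probability $\nu_x$, to collapse $f$ along the fibers of $Sym$. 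Once this is in place, the contradiction is a direct comparison of two elementary consequences of $\GLB$-invariance.
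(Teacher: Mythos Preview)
Your argument has a genuine gap at the step where you ``push the invariance $\mu(\sigma A)=\mu(A)$ forward through $Sym$'' to conclude $F(\sigma z)=F(z)$ almost everywhere. This step tacitly assumes that the permutation matrix $\sigma=(1,2)\in\GLB$ acts on symbols by swapping the first two coordinates, i.e.\ that $Sym(\sigma X)=\sigma\bigl(Sym(X)\bigr)$. That identity is false. Take any $X$ with $Sym(X)=(0,1,x_3,\dots)$, so $X\cap V_1=0$ and $X\cap V_2$ is a line $\ne\langle e_1\rangle$. If $X\cap V_2=\langle e_2\rangle$ then $\sigma X\cap V_1=\langle e_1\rangle$ and $Sym(\sigma X)=(1,0,x_3,\dots)$; but if $X\cap V_2=\langle e_1+ae_2\rangle$ with $a\ne 0$ then $\sigma X\cap V_1=0$ and $Sym(\sigma X)=(0,1,x_3,\dots)$. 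Hence $\sigma$ does \emph{not} simply permute Schubert cells, the map $Sym$ does not intertwine the two actions of $\sigma$, and there is no reason for $Sym_*\mu$ to be invariant under coordinate permutations. In fact your two displayed identities for $F$ are derived from the \emph{same} invariance of $\mu$ under the single element $\sigma$; once you have the correct cocycle relation $F(0,1,x_3,\dots)=q\,F(1,0,x_3,\dots)$ (which does follow, by looking at the cell $\{x_1=1,x_2=0\}$ where $Sym(\sigma X)$ \emph{is} the coordinate swap), the second relation $F(\sigma z)=F(z)$ cannot also follow from that same invariance --- and indeed it does not.

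What remains after your reduction is the honest question of whether the $q^{\mathbb Z}$--valued cocycle $c(\tau,x)=q^{\sum_k k((\tau x)_k-x_k)}$ for the action of finite permutations on Bernoulli space is a coboundary; this is exactly the nontrivial content of the proposition, and it is not settled by the symmetry of the Bernoulli measure. The paper's proof handles this by a level-set argument: one chooses $M=\{a<f<b\}$ with $b/a<q$ of positive measure, observes that $\GLB$--translates of $M\cap U(X;n)$ by elements with different cocycle values land in disjoint sets, and obtains a contradiction by comparing the total measure thus produced with a combinatorial count of how the quantity $\sum_{k\le n} k\,Sym(X)_k$ spreads over an orbit $\mathcal O^r$. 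Some argument of this flavor --- quantifying that the cocycle takes too many distinct values on each orbit for a bounded density to absorb --- is needed; the shortcut through permutation-invariance of $Sym_*\mu$ is not available.
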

\noindent {\bf Remark. }  The classification of all \emph{finite} $\GLB$--invariant measures on
$Gr(V)$ was recently found in \cite{GO_qPascal}. But the theorem of \cite{GO_qPascal} is not enough
for us, since we also want to deal with $\sigma$--finite measures.
\begin{proof}[Proof of Proposition \ref{Prop_no_equiv}]
 We argue by contradiction. Suppose that such measure $\varphi$ exists. Then by the
 Radon--Nickodim theorem, $\varphi$ should have a positive density $f(x)$ with respect to
 $\eta_{t_1,t_2}$. Choose two positive numbers $a<b$ such that $b/a<q$ and the set
 $$
  M=\{x\in Gr(V)\mid a<f(x)<b\}
 $$
 has a positive measure $\mathfrak m$ with respect to $\eta_{t_1,t_2}$.

 Fix a large enough integer $n$. For a subspace $Y\in Gr(V_n)$ denote
 $$
  U(Y;n)=\{X\in Gr(V)\mid X\bigcap V_n=Y\}.
 $$
 Observe that $\GL{n}$--orbits in $Gr(V_n)$ are parameterized by integers $r=0,1,\dots,n$ which represent the sum of the coordinates of symbols of subspaces in the orbit and
 let $\mathcal O^r$ be the corresponding orbit:
 $$\mathcal O^r=\left\{X\in Gr(V_n) \mid \sum_{k=1}^n Sym(X)_k=r\right\}.$$
 Further, set
 $$
  \mathcal O^r_m=\left\{X\in \mathcal O^r\mid \sum_{k=1}^n k Sym(X)_k=m\right\}.
 $$

 Now choose $r$ such that
\begin{equation}
\label{eq_x13}
 \frac{ \eta_{t_1,t_2}\left(\bigcup_{X\in\mathcal O^r} U(X;N) \bigcap M\right)} {
\eta_{t_1,t_2}\left(\bigcup_{X\in\mathcal O^r} U(X;N)\right)} \ge \mathfrak m/2.
\end{equation}
Since $t_1,t_2>0$, the definition of $\eta_{t_1,t_2}$  implies that for large enough $n$ we can
assume $0<r<n$. Let $N_r(m)$ be the number of $0-1$ sequences $\{x_i\}$ of length $n$ such that
$\sum_k x_k=r $ and $\sum_k k x_k=m$. Then we have
\begin{equation}
\label{eq_x16} \frac{N_r(m)}{\sum_l N_r(l)} =\frac{ \eta_{t_1,t_2}\left(\bigcup_{X\in \mathcal
O^r_m} U(X;N)\right)}{ \eta_{t_1,t_2}\left(\bigcup_{X\in\mathcal O^r} U(X;N)\right)},
\end{equation}
\begin{equation}
\label{eq_x15} \sup_{m} \frac{N_r(m)}{\sum_l N_r(l)} \le H(r,n),
\end{equation}
and the function $H(r,n)$ tends to $0$ as $n\to\infty$ uniformly in $0<r<n$.

Now let
$$
 c_m= \frac{
\eta_{t_1,t_2}\left(\bigcup_{X\in\mathcal O^r_m} U(X;N) \bigcap M\right)} {
\eta_{t_1,t_2}\left(\bigcup_{X\in\mathcal O^r} U(X;N)\right)}.
$$
Inequality \eqref{eq_x13} means that $\sum_m c_m \ge \mathfrak m/2$ and
\eqref{eq_x16},\eqref{eq_x15} imply that $\sup_m c_m \le H(r,n)$.


Fix arbitrary $Z\in\mathcal O^r$ and for each other $X\in\mathcal O^r$ choose $g(X)\in\GL{n}$
sending $Z$ to $X$. In particular, set $g(Z)$ to be the unit element of $\GL{n}$.
 For two subspaces $X,Y\in\mathcal O^r$ define
 $$
  M_X(Y)=g(Y) g(X)^{-1} \bigl( M\cap U(X;n) \bigr) \subset U(Y;n).
 $$
The definitions imply that if $X,Y\in\mathcal O^r_m$, then for any $U\in\mathcal O^r$ we have
$M_X(U)=M_Y(U)$ (here and below all such identities should be understood up to the sets of
$\eta_{t_1,t_2}$ measure zero). Indeed, if $U=Y$, then using Proposition \ref{prop_eta_quasiinvar}
we see that under the action of $g(Y) g(X)^{-1}$ restricted to $U_{X;n}$, both measures $\phi$ and
$\eta_{t_1,t_2}$ are invariant and, thus, the density $f$ also does not change and
$M_X(Y)=U(Y;n)\cap M=M_Y(Y)$. Applying $g(U) g(Y^{-1})$ we get the general $U$ case. Similarly, if
$X\in\mathcal O^r_m$ and $Y\in\mathcal O^r_{m'}$ with $m\ne m'$, then for any $U\in\mathcal O^r$,
the sets $M_X(U)$ and $M_Y(U)$ are disjoint --- this immediately follows from the observation that
the inequality $a<f(x)<b$ breaks down when we multiply $f(x)$ by any integral power of $q$.

Further, observe that for any $X\in\mathcal O^r$  the sum over a cell
\begin{equation}
\label{eq_x14}
 \sum_{Y\in Sch^n(y)}
\eta_{t_1,t_2}(M_X(Y))
\end{equation}
does not depend on the choice of cell $Sch^n(y)$. Indeed, by Proposition
\ref{prop_eta_quasiinvar} $\eta_{t_1,t_2}(M_X(Y))$ differs from
$\eta_{t_1,t_2}(M_X(X))$ by a power of $q$ and the same power appears when we compare using \eqref{eq_Schubert_size}
the number of terms in \eqref{eq_x14} for different cells.

Now fix a cell $Sch^n(y)$. For each $m$ we can choose $X\in \mathcal O^r_m$ and form the set
$\bigcup\limits_{Y\in Sch^n(y)} M_X(Y) $ of $\eta_{t_1,t_2}$--measure
$$
 \frac{c_m}{N_r(m)} \cdot \eta_{t_1,t_2}\left(\bigcup_{X\in\mathcal O^r} U(X;N)\right).  $$
 Note that all these sets are
disjoint. Therefore, summing over all $m$ and all Schubert cells in $\mathcal O^r$ we conclude that
\begin{equation}
\label{eq_x17}
 \sum_{m,l} \frac{c_m}{N_r(m)} N_r(l) \le 1.
\end{equation}
On the other hand,
$$
 \sum_{m,l} \frac{c_m}{N_r(m)} N_r(l)= \left(\sum_m \frac{c_m}{N_r(m)} \right)\left(\sum_l N_r(l)
 \right) \ge \left(\sum_m \sqrt{c_m}\right)^2 \ge \frac{(\sum_m c_m)^2}{\sup_m c_m } \ge \frac{\mathfrak
 m^2}{4 H(r,n)},
$$
which for large $n$ contradicts \eqref{eq_x17}.
\end{proof}

In order to get the desired spherical representation we need to introduce a more complicated
space. The construction of this space has similarities with analogous construction for infinite
symmetric group $S(\infty)$, see \cite{VK_S}, \cite{VT} with some ideas tracing back to the papers
of F.~J.~Murray and J.~von~Neumann \cite{MN}, \cite{N}.

Let
$$
Gr^2(V)=\{(X,Y)\in Gr(V)\times Gr(V)\mid X=gY, \text{ for some } g\in \GLB\}.
$$
We equip the set $Gr^2(V)$ with a topology, the elementary open neighborhoods of a point $(X,Y)$
are indexed by numbers $n=0,1,2,\dots$ and
\begin{multline*}
 U^n(X,Y)=\Bigl\{(Z,W)\in Gr^2(V)\mid Z\bigcap V_n=X\bigcap V_n,\ W\bigcap V_n= Y\bigcap V_n, \\
 \dim(Z\bigcap V_k)=\dim(W\bigcap V_k),\text{ for }k\ge n\Bigr\}.
\end{multline*}
Note that $U^n(X,Y)$ actually depends only on $X\cap V_n$, $Y\cap V_n$ and this set is empty
unless $\dim(X\cap V_n)=\dim(Y\cap V_n)$. In this topology $Gr^2(V)$ is locally compact. The group
$\GLB\times\GLB$ naturally acts in $Gr^2(V)$ and the action is continuous in the introduced
topology.

 Now we introduce a measure
$\rho_{t_1,t_2}$  on $Gr^2(V)$ which is quasiinvariant with respect to the action of
$\GLB\times\GLB$.

Let $x,y$ be two infinite $0-1$ sequences. We write $x\sim y$ if there exists $N$ such that
$x_n=y_n$ for $n>N$ and $\sum_{n=1}^N x_n =\sum_{n=1}^N y_n$. Note that $(X_1,X_2)\in Gr(V)\times
Gr(V)$ belongs to $Gr^2(V)$ is and only if $Sym(X_1)\sim Sym(X_2)$.

Denote $\mathcal T$ the set of pairs $(x,y)$ of infinite $0-1$ sequences such that $x\sim y$.
$\mathcal T$ is equipped with sigma algebra spanned by the sets $A_{i_1,\dots,i_n}^{j_1,\dots,j_n}$,
where $i_1,\dots, i_n$ and $j_1,\dots, j_n$ are two $0-1$ sequences such that $\sum i_k=\sum j_k$
$$
 A_{i_1,\dots,i_n}^{j_1,\dots,j_n}=\{(x,y)\in \mathcal T\mid x_1=i_1,\dots, x_n=i_n,\, y_1=j_1,\dots,
 y_n= j_n,\, y_k=x_k, \text{ for } k>n\}.
$$
We define the measure $R_{t_1,t_2}$  by
$$
 R_{t_1,t_2}(A_{i_1,\dots,i_n}^{j_1,\dots,j_n})=t_1^{\sum_k i_k} t_2^{n-\sum_k i_k}.
$$

Let $\psi$ be the map
$$
 \psi:\mathcal T \times \B \times \B \to Gr^2(V),\quad \psi((x,y,g,h))= (gC(x),h C(y),
$$
and let $\rho_{t_1,t_2}$  be the push-forward of the measure $R_{t_1,t_2}\otimes \mu_\B\otimes
\mu_\B$ with respect to $\psi$.

The following proposition gives a more direct description of the measure $\rho_{t_1,t_2}$.

\begin{proposition}
\label{proposition_measure_evaluation} Let $n=0,1,2,\dots$ and $(X,Y)\in Gr^2(V)$. Suppose that
$\dim(X\cap V_k)=\dim(Y\cap V_k)$ for $k\ge n$ and denote
$$m:=\dim(X\cap V_n)=\dim(Y\cap V_n)=\sum_{k=1}^n Sym(X)_k.$$ We have
$$
 \rho_{t_1,t_2}(U^n(X,Y))=q^{m(m+1)-\sum_k k(Sym(X)_k+Sym(Y)_k)} t_1^m t_2^{n-m}.
$$

\end{proposition}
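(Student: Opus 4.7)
The plan is to pull the set $U^n(X,Y)$ back along the parametrizing map $\psi$ and evaluate the product measure $R_{t_1,t_2}\otimes \mu_{\B}\otimes\mu_{\B}$ by Fubini. The crucial structural fact is that every $g\in\B$ preserves each subspace $V_k$, so
$$
  Sym(gC(x'))\;=\;Sym(C(x'))\;=\;x',\qquad gC(x')\cap V_n\;=\;g\bigl(C(x')\cap V_n\bigr),
$$
and consequently $gC(x')\cap V_n$ depends only on $g|_{V_n}$ and on the initial segment $(x'_1,\ldots,x'_n)$.

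First, I would unpack the preimage $\psi^{-1}(U^n(X,Y))$. Using the identity $Sym(gC(x'))=x'$, the condition $(gC(x'),hC(y'))\in U^n(X,Y)$ translates into: (a) $x'_k=Sym(X)_k$ and $y'_k=Sym(Y)_k$ for $k\le n$; (b) $x'_k=y'_k$ for $k>n$; (c) $gC(x')\cap V_n=X\cap V_n$ and $hC(y')\cap V_n=Y\cap V_n$. Conditions (a) and (b) cut out precisely the cylindrical set
$$
  A\;:=\;A_{Sym(X)_1,\ldots,Sym(X)_n}^{Sym(Y)_1,\ldots,Sym(Y)_n}\;\subset\;\mathcal T,
$$
which is well-defined because $\sum_{k\le n}Sym(X)_k=m=\sum_{k\le n}Sym(Y)_k$, and by the definition of $R_{t_1,t_2}$ it has measure $t_1^m t_2^{n-m}$.

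Second, I would compute the $g$-fiber measure $\mu_\B(\{g\in\B:gC(x')\cap V_n=X\cap V_n\})$ for fixed $(x',y')\in A$. Thanks to the decoupling displayed above, this fiber is determined by $(x'_1,\ldots,x'_n)$, which is constant throughout $A$. Now $\pi_{x'}$ pushes $\mu_\B$ forward to $\nu_{x'}$, and $\pi^{(n)}$ pushes $\nu_{x'}$ forward to the uniform probability measure $\nu^n_{(x'_1,\ldots,x'_n)}$ on $Sch^n((x'_1,\ldots,x'_n))$; since $X\cap V_n$ lies in this cell by (a), the fiber measure equals the weight of a single point, namely $q^{m(m+1)/2-\sum_{k=1}^n k\,Sym(X)_k}$ by \eqref{eq_Schubert_size}. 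The analogous computation for $h$ yields $q^{m(m+1)/2-\sum_{k=1}^n k\,Sym(Y)_k}$.

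Third, since the $g$- and $h$-fiber measures are both constant on $A$ and involve independent arguments, Fubini gives
$$
  \rho_{t_1,t_2}(U^n(X,Y))\;=\;t_1^m t_2^{n-m}\cdot q^{m(m+1)/2-\sum_k k\,Sym(X)_k}\cdot q^{m(m+1)/2-\sum_k k\,Sym(Y)_k},
$$
which rearranges to the asserted formula. The only delicate step is the preimage description in step one: one must verify that a vector $v=\sum_{i:\,x'_i=1}c_i\, g e_i$ lies in $V_n$ only when $c_i=0$ for $i>n$, which follows because $g_{ii}\ne 0$ makes the $e_i$-component of $g e_i$ impossible to cancel by other terms. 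Everything else reduces to routine bookkeeping with the already-established formula \eqref{eq_Schubert_size} and with the definitions of $R_{t_1,t_2}$, $\nu_{x'}$, and $\psi$.
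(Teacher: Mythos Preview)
Your proof is correct and is essentially the same computation as the paper's, just organized dually: the paper observes that $\psi(A\times\B\times\B)$ is a disjoint union of $|Sch^n(Sym(X)|_n)|\cdot|Sch^n(Sym(Y)|_n)|$ sets $U^n(Z_i,W_j)$ of equal measure (by $\B\times\B$-invariance) and divides, whereas you compute $(R_{t_1,t_2}\otimes\mu_\B\otimes\mu_\B)(\psi^{-1}(U^n(X,Y)))$ directly via Fubini. Both reduce to the same two ingredients, $R_{t_1,t_2}(A)=t_1^m t_2^{n-m}$ and the Schubert cell count \eqref{eq_Schubert_size}; your verification that $gC(x')\cap V_n=g(C(x')\cap V_n)$ is a clean way to justify the fiber description that the paper leaves implicit.
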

\begin{proof}
 Observe that if $(X',Y')$ are such that $Sym(X')=Sym(X)$ and $Sym(Y')=Sym(Y)$, then there exist
 $(g,h)\in \B\times\B$ such that $X'=gX$, $Y'=hY$. Therefore
 $$
  U^n(X',Y')=(g,h)U^n(X,Y),
 $$
 hence, by the definition of the measure,
 $$
 \rho_{t_1,t_2}(U^n(X,Y))=\rho_{t_1,t_2}(U^n(X',Y')).
 $$
 Now note that
 \begin{equation}
 \label{eq_x1}
  \psi(A_{Sym(X)_1,\dots,Sym(X)_n}^{Sym(Y)_1,\dots,Sym(Y)_n}\times \B\times \B)=\bigcup_{(Z_i,W_i)} U^n(Z,W),
 \end{equation}
 where $Z_i$ goes over $q^{\sum_k k(Sym(X)_k-m(m+1)/2}$ subspaces of $V$ such that $Sym(Z_i)=Sym(X)$
 and $Z_i\bigcap V_n$ are pairwise distinct; $W$ goes over $q^{\sum_k k(Sym(Y)_k-m(m+1)/2}$ subspaces of $V$ such that $Sym(W_i)=Sym(Y)$
 and $W_i\bigcap V_n$ are pairwise distinct. Evaluating $\rho_{t_1,t_2}$ of both sides of \eqref{eq_x1} we get the desired formulas.
\end{proof}

\begin{corollary}
\label{corol_rho_quasiinvar} The measure $\rho_{t_1,t_2}$  is quasi-invariant with respect to the
action of $\GLB\times\GLB$. The corresponding cocycle is given by
$$
 \frac{\rho_{t_1,t_2}((g,h) \cdot d(X,Y))}{\rho_{t_1,t_2}(d(X,Y))}=q^{\sum_k
 k(Sym(X)_k-Sym(gX)_k) + \sum_k k(Sym(Y)_k-Sym(hY)_k)}.
$$

\end{corollary}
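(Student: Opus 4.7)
The plan is to deduce the cocycle identity directly from the explicit evaluation of $\rho_{t_1,t_2}$ on elementary open neighborhoods provided by Proposition \ref{proposition_measure_evaluation}. The sets $U^n(X,Y)$, as $n$ ranges over nonnegative integers and $(X,Y)$ over $Gr^2(V)$, form a countable family closed under intersection (two such sets either have empty intersection or the finer one is contained in the coarser one) and generate the Borel $\sigma$-algebra of $Gr^2(V)$. Consequently, it suffices to identify the Radon--Nikodym derivative of $(g,h)_*\rho_{t_1,t_2}$ with respect to $\rho_{t_1,t_2}$ on these sets.

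Fix $(g,h)\in \GLB\times \GLB$. By definition of $\GLB$, there exists $N$ with $g,h\in \GLB_N$, and any such matrix preserves each subspace $V_k$ with $k\ge N$. Indeed, for $i>N$ the relation $g_{ij}=0$ whenever $i>j$ forces $ge_j\in V_k$ for all $j\le k$. Therefore, for any $n\ge N$ and any $(X,Y)\in Gr^2(V)$ we have $\dim((gX)\cap V_k)=\dim(X\cap V_k)$ and $\dim((hY)\cap V_k)=\dim(Y\cap V_k)$ for all $k\ge n$, and $(gX)\cap V_n=g(X\cap V_n)$, $(hY)\cap V_n=h(Y\cap V_n)$. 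This gives the key identity
\[
 (g,h)\cdot U^n(X,Y)=U^n(gX,hY)
\]
for all $n\ge N$ large enough that $U^n(X,Y)$ witnesses the condition $\dim(X\cap V_k)=\dim(Y\cap V_k)$ for $k\ge n$.

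Now apply Proposition \ref{proposition_measure_evaluation} to both sides. Writing $m=\dim(X\cap V_n)=\dim(Y\cap V_n)$, the same integer $m$ serves for $(gX,hY)$ since $g$ and $h$ preserve $V_n$. Hence the factor $q^{m(m+1)}t_1^m t_2^{n-m}$ is the same in the numerator and denominator, and taking the ratio yields
\[
 \frac{\rho_{t_1,t_2}((g,h)\cdot U^n(X,Y))}{\rho_{t_1,t_2}(U^n(X,Y))}
 =q^{\sum_{k=1}^{n} k(Sym(X)_k-Sym(gX)_k)+\sum_{k=1}^{n} k(Sym(Y)_k-Sym(hY)_k)}.
\]
Because $Sym(X)_k=Sym(gX)_k$ and $Sym(Y)_k=Sym(hY)_k$ for all $k>N$, each of the two sums stabilizes as $n\to\infty$ to a finite integer depending only on $(g,X)$ and $(h,Y)$ respectively, so the right-hand side is independent of $n$ for $n\ge N$.

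Since this common value is a locally constant function of $(X,Y)$ and the identity holds on the generating $\pi$-system of elementary neighborhoods, $\rho_{t_1,t_2}$ is quasi-invariant under $\GLB\times \GLB$ with Radon--Nikodym derivative exactly the displayed cocycle. The main (and only nontrivial) point to verify carefully is the preservation property $(g,h)\cdot U^n(X,Y)=U^n(gX,hY)$ for large $n$; everything else is bookkeeping on top of Proposition \ref{proposition_measure_evaluation}.
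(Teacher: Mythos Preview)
Your argument is correct and follows exactly the same approach as the paper's proof, which simply invokes Proposition \ref{proposition_measure_evaluation} together with the observation that $(g,h)U^n(X,Y)=U^n(gX,hY)$ for sufficiently large $n$. You have carefully filled in the details of this sketch, including the verification that $g,h\in\GLB_N$ preserve each $V_k$ for $k\ge N$ and the stabilization of the finite sums.
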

\begin{proof}
 This follows from Proposition \ref{proposition_measure_evaluation} and the fact that if
 $(g,h)\in\GLB\times\GLB$ and $n$ is large enough integer, then
 $$
  (g,h)U^n(X,Y)=U^n(gX,hY).
 $$
\end{proof}
\noindent {\bf Remark.} It is easy to replace the measure $\rho_{t_1,t_2}$ with an equivalent one
$\widehat \rho_{t_1,t_2}$ which would be invariant with respect to the action of the subgroup
$\GLB\times\{e\}\subset \GLB\times\GLB$. However, it is not possible to achieve the invariance with
respect to the whole group $\GLB\times\GLB$.

\smallskip
Further, let $\pi_{t_1,t_2}$ denote the usual unitary representation of $\GLB\times\GLB$ in the
$L_2(Gr^2(V),\rho_{t_1,t_2})$. In other words, for $f\in L_2(Gr^2(V),\rho_{t_1,t_2})$ and
$(g,h)\in\GLB\times\GLB$ we have
$$
 \left[\pi_{t_1,t_2}(g,h)f \right] (X,Y)= f(g^{-1}X, h^{-1}Y) \sqrt{\frac{\rho_{t_1,t_2}((g^{-1},h^{-1}) \cdot
 d(X,Y))}{\rho_{t_1,t_2}(d(X,Y))}}.
$$

Let $C^0(Gr^2(V))$ denote the space of continuous functions on $Gr^2(V)$ with compact support
equipped with supremum-norm. We have natural inclusions
$$
 C^0(Gr^2(V))\subset L_2(Gr^2(V),\rho_{t_1,t_2}) \subset \big(C^0(Gr^2(V))\big)^*.
$$
 Consider the unitary representation of $\GLB\times\GLB$
dual to the restriction of $\pi_{t_1,t_2}$ on  $C^0(Gr^2(V))$. Somewhat abusing the notations we
will use the same symbol $\pi_{t_1,t_2}$ for this representation.

Let $v_{t_1,t_2}\in C^0(Gr^2(V))^*$ denote the linear functional:
$$
 v_{t_1,t_2}:C^0(Gr^2(V))\to\mathbb C,\, v_{t_1,t_2}(f)=\int_{Gr(V)} f(X,X) \eta_{t_1,t_2}(dX).
$$
Further, let $Sp(v_{t_1,t_2})$ denote the $\GLB\times\GLB$ cyclic span of the linear functional
$v_{t_1,t_2}$ and let $\hat\pi_{t_1,t_2}$ denote the restriction of $\pi_{t_1,t_2}$ on
$L_2$--closure of $Sp(v_{t_1,t_2}) \cap C^0(Gr^2(V))$.

\begin{theorem}
\label{Theorem_construction_gras}
 The triplet $(\hat \pi_{t_1,t_2}, C^0(Gr^2(V)) \cap Sp(v_{t_1,t_2}), v_{t_1,t_2})$ is a generalized spherical
 representation. Its spherical function gives the extreme unipotent  trace of $\A(\GLB)$ with
 parameters $\alpha_1=t_1$, $\alpha_2=t_2$. The restriction of this representations on the first
 component is von Neumann factor representation of type $II_\infty$.
\end{theorem}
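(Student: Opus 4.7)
My plan is to verify in turn the three assertions, leaning on the structural results about $\rho_{t_1,t_2}$ and $\eta_{t_1,t_2}$ (Proposition \ref{prop_eta_quasiinvar}, Proposition \ref{Prop_no_equiv}, Proposition \ref{proposition_measure_evaluation}, Corollary \ref{corol_rho_quasiinvar}) together with the trace classification of Section \ref{Section_character_classif}.

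\textbf{Step 1: Spherical representation axioms.} Unitarity of $\pi_{t_1,t_2}$ on $L_2(Gr^2(V),\rho_{t_1,t_2})$ follows from Corollary \ref{corol_rho_quasiinvar}; dualizing the restriction to $C^0(Gr^2(V))$ and restricting to the $(\GLB\times\GLB)$-cyclic hull of $v_{t_1,t_2}$ yields a unitary representation with $H_1$ continuously included and stable. Cyclicity and the normalization $(\pi(I_\B)v,v)=1$ are built into the construction. The diagonal invariance $\pi(g,g)v=v$ is a direct cocycle computation: restricted to the diagonal, the Radon--Nikodym derivative of $\rho_{t_1,t_2}$ from Corollary \ref{corol_rho_quasiinvar} is the square of that of $\eta_{t_1,t_2}$ from Proposition \ref{prop_eta_quasiinvar}, so the square-root factor in $\pi$ exactly compensates the change of variables $X\mapsto gX$ in the definition of $v_{t_1,t_2}$. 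The one nontrivial point is showing that $\pi(I^\GLB_g,e)v$ and $\pi(e,I^\GLB_g)v$ lie in $H_1$: one unwinds the action against a test function to express this distribution as a continuous, compactly supported function on $Gr^2(V)$ given by a locally finite sum over the compact set $\Cyl^\GLB_g$, using that at each finite level the relevant $h$-equivalences between $X\cap V_n$ and $Y\cap V_n$ form a finite set.

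\textbf{Step 2: Identifying the spherical function.} By Theorem \ref{theorem_characters_of_GLB} it suffices to identify $\chi(I^\GLB_g)=\langle\pi(I^\GLB_g,e)v,v\rangle$ with $\chi^{\omega}$ for $\omega=((t_1,t_2),\emptyset,\emptyset)$, which in turn reduces via the multiplicativity Theorem \ref{Theorem_multiplicativity_of_characters} to the case of $g\in\GL{n}$ with $g$ unipotent upper-triangular of Jordan type $\lambda$. In that case, the definitions unfold into
\[
\chi(I^\GLB_g)=\int_{Gr(V)}\int_{\Cyl^\GLB_g}\mathbf{1}[hX=X]\,\sqrt{J_h(X,X)}\,d\mu_{\GLB}(h)\,\eta_{t_1,t_2}(dX),
\]
where $J_h$ is the cocycle. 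Using Proposition \ref{proposition_measure_evaluation} and the Schubert-cell description of $\eta_{t_1,t_2}$, this collapses to a finite sum over $g$-invariant flags of a given symbol in $V_n$, weighted by $q$-powers and by $t_1^{m}t_2^{n-m}$. I would match the resulting combinatorial expression against $q^{n(\lambda)}\Sp_{(t_1,t_2),\emptyset,1}[\widetilde Q_\lambda]$ from Theorem \ref{theorem_character_at_unipotent_class}; the main obstacle is precisely this Hall--Littlewood identification, which I expect to carry out by recognizing the weighted count of $g$-invariant flags as the Hall--Littlewood principal specialization, consistent with the classical interpretation of modified Hall--Littlewood polynomials as generating functions for partial flag counts. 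For non-unipotent $g$ the sum over $h\in\Cyl^\GLB_g$ fixing an $X$ with $X\cap V_n$ of the correct symbol is empty at large $n$ by a characteristic-polynomial argument on the finite corners, so $\chi(I^\GLB_g)=0$, consistent with $f=\emptyset$ in Theorem \ref{theorem_characters_of_GLB}. Extremality of the resulting trace then follows directly from its appearance in the classification of Theorem \ref{theorem_characters_of_GLB}.

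\textbf{Step 3: Type $II_\infty$ factor property.} Let $\mathcal V$ be the von Neumann algebra generated by $\hat\pi_{t_1,t_2}(\GLB\times\{e\})$ in the cyclic hull of $v_{t_1,t_2}$. The functional $a\mapsto\chi(a)$ extends to a semifinite trace on $\mathcal V$; its extremality (inherited from the extremality of $\chi^\omega$ in Theorem \ref{theorem_characters_of_GLB}) implies that $\mathcal V$ is a factor. Finiteness would force $v_{t_1,t_2}$ to be represented by a vector in $H$, contradicting that $v_{t_1,t_2}$ is integration along the diagonal (which has $\rho_{t_1,t_2}$-measure zero), so $\mathcal V$ is not of type $II_1$ and not of type $I_n$ for finite $n$. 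The main obstacle here is ruling out type $I_\infty$: were $\mathcal V$ of type $I_\infty$, a central disintegration argument together with the action of multiplication operators on $L_2(Gr^2(V),\rho_{t_1,t_2})$ would produce a $\sigma$-finite $\GLB$-invariant measure on $Gr(V)$ equivalent to $\eta_{t_1,t_2}$, contradicting Proposition \ref{Prop_no_equiv}. Combined, $\mathcal V$ is a semifinite, non-finite factor of non-type-$I$, i.e.\ type $II_\infty$, as claimed; the subtlety that the ambient crossed product is type $III$ (and does not admit a semifinite trace extended to multiplication operators) is compatible with $\mathcal V$ itself carrying the semifinite trace, because $\mathcal V$ is strictly smaller than the crossed product.
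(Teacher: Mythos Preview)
Your Step 2 contains a genuine error. You claim that for non-unipotent $g\in\GL{n}$ the spherical function $\chi(I^\GLB_g)$ vanishes ``by a characteristic-polynomial argument on the finite corners.'' This is false: unwinding the definitions as the paper does, one finds
\[
\chi(I^\GLB_g)=\mu(\BI_n)\sum_{X\in Gr(V_n):\, gX=X} t_1^{\dim X}\, t_2^{\,n-\dim X},
\]
i.e.\ the weighted count of $g$-invariant subspaces of $V_n$. This is typically nonzero for non-unipotent $g$ (e.g.\ for $g$ diagonal with distinct eigenvalues the coordinate subspaces are invariant), and indeed Theorem \ref{theorem_character_at_simple_class} gives a nonzero Hall--Littlewood formula for $\chi^\omega$ on such classes. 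Relatedly, your proposed reduction via Theorem \ref{Theorem_multiplicativity_of_characters} is circular: multiplicativity is a property of $\chi^\omega$, not yet of the constructed $\chi$, so you cannot use it to reduce the verification $\chi=\chi^\omega$ to the unipotent case without first knowing $\chi$ is multiplicative or computing it everywhere anyway.

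The paper's route in Step 2 avoids both issues and is considerably simpler than your Hall--Littlewood plan: having obtained the formula above for \emph{all} $g\in\GL{n}$, one recognizes it as $\sum_m t_1^m t_2^{n-m}\psi_m^q(g)$ where $\psi_m^q$ is the character of the $\GL{n}$-action on functions on the finite grassmannian of $m$-planes in $V_n$. The classical decomposition $\Psi_m=\bigoplus_\lambda K_{(n-m,m),\lambda}\pi^\lambda$ via Kostka numbers (equivalently $h_m h_{n-m}=\sum_\lambda K_{(n-m,m),\lambda}s_\lambda$) then gives $\chi=\sum_\lambda s_\lambda(t_1,t_2)\chi^\lambda$, which matches \eqref{eq_extreme_GLB} directly. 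No multiplicativity and no Hall--Littlewood identification is needed. Your Step 1 and Step 3 are in the right spirit; note however that the paper's proof actually carries out the computation in property (3) explicitly, producing the concrete function $v^n_{t_1,t_2}$ supported on $L_n$, and this explicit form is what makes the pairing in Step 2 computable.
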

\begin{proof}
 Let us check the 6 properties of a generalized spherical representation.
\begin{enumerate}
\item
 The natural map $i: C^0(Gr^2(V))\to L_2(Gr^2(V),\rho_{t_1,t_2})$ is, indeed, a continuous inclusion. This
 follows from our choice of topology.
\item
 Since for any element $(g,h)\in\GLB\times\GLB$ and any elementary open neighborhood $U^n(X,Y)$ (with large enough
 $n$) we have $(g,h) U^n(X,Y)=U^n(gX,gY)$, thus, the action of $\GLB\times\GLB$ maps open
 sets to open sets and, therefore, preserves the space of continuous functions with compact support.
\item
 It suffices to prove that $(I_{e(n)},e)v_{t_1,t_2}$ is a continuous function with compact support.
 Let $\BI_n$ denote the subgroup $\Cyl^\GLB_{e(n)}\subset \B$. By the definition
 \begin{multline*}
 ((I_{e(n)},e)v_{t_1,t_2},f)=\int_{X\in Gr(V)} \int_{g\in \BI_n} ((g,e)\cdot f)(X,X) \mu(dg) \eta_{t_1,t_2}(dX)\\=
 \int_{X\in Gr(V)} \int_{g\in \BI_n} \sqrt{\frac{(g^{-1},e)\rho_{t_1,t_2}}{\rho_{t_1,t_2}}(X,X)} f(g^{-1}X,X) \mu(dg) \eta_{t_1,t_2}(dX)
\\= \int_{X\in Gr(V)} \int_{g\in \BI_n} f(g^{-1}X,X) \mu(dg) \eta_{t_1,t_2}(dX)
\end{multline*}
For $X,Y\in Gr(V)$ let $\mathcal U^k(X,Y)$ denote the indicator function of the set $U^k(X,Y)$. To
analyze the last integral we set $f=\mathcal U^k(I,J)$, $k>n$, $I,J\in V^k$ and compute $
\int_{g\in \BI_n} f(g^{-1}X,X)\mu(dg) $. Observe that if $X\bigcap V_k\ne J$ or $X\bigcap V_n\ne
I\bigcap V_n$ or $\dim(X\bigcap V_\ell)\ne \dim (I\bigcap V_\ell)$ for some $n\le \ell \le k$,
then $(g^{-1}X,X)$ does not belong to $U^k(I,J)$ and the integral vanishes. Otherwise, it is equal
to
$$
 \int_{g\in \BI_n} f(g^{-1}X,X)\mu(dg)= \mu(\BI_n)/M,
$$
where $M$ is the number of $Z\in Gr(V_k)$ such that $Z\bigcap V_n= I\bigcap V_n$ and
$\dim(Z\bigcap V_\ell)=\dim(I\bigcap V_\ell)$ for all $n\le \ell \le k$. We have
$$
1/M=q^{\dim(I)(\dim(I)+1)/2-\dim(I\cap V_n)(\dim(I\cap
 V_n)+1)/2-\sum_{\ell=n+1}^k \ell Sym(I)_\ell}.
$$
Therefore, the double integral equals
\begin{multline}
\label{eq_x3} \int_{X\in Gr(V)} \int_{g\in \BI_n} f(g^{-1}X,X) \mu(dg)
\eta_{t_1,t_2}(dX)\\=\mu(\BI_n) q^{\dim(I)(\dim(I)+1)/2-\dim(I\cap V_n)(\dim(I\cap
 V_n)+1)/2-\sum_{\ell=n+1}^k \ell Sym(I)_\ell} \eta_{t_1,t_2}^k(J)
\end{multline}
if $I\bigcap V_n=J\bigcap V_n$ and $\dim(I\bigcap V_\ell)=\dim(J\bigcap V_\ell)$ for $n\le\ell\le
k$, otherwise the double integral vanishes. \eqref{eq_x3} together with formulas for
$\rho_{t_1,t_2}(U^k(I,J))$ and $\eta_{t_1,t_2}^k(J)$ imply
\begin{multline}
\label{eq_functional_computation} \int_{X\in Gr(V)} \int_{g\in \BI_n} f(g^{-1}X,X) \mu(dg)
\eta_{t_1,t_2}(dX)\\= \int_{Gr^2(V)} v^n(X,Y) f(X,Y) \rho_{t_1,t_2}(d(X,Y)),
\end{multline}
 where
$$
 v^n_{t_1,t_2}(X,Y) =\begin{cases} \mu(\BI_n) q^{-\dim(X\cap V_n) (\dim(X\cap V_n)+1)/2 +\sum_{\ell=1}^n
 \ell Sym(X)_\ell},\text{ if } (X,Y)\in L_n,\\ 0,\text{ otherwise}, \end{cases}
$$
\begin{multline*}
  L_n=\{(X,Y)\in Gr^2(V)\mid \\ X\bigcap V_n=Y\bigcap V_n,\, \dim( X\bigcap V_k)=\dim( Y\bigcap V_k) \text{ for
  }k>n\}.
\end{multline*}
 Since the linear span of the functions $\mathcal U^k(I,J)$ (with various $k$) is dense in
 $L_2(Gr^2(V),\rho_{t_1,t_2})$, the equality \eqref{eq_functional_computation} holds for a general
 function $f(X,Y)$.  Thus, $(I_{e(n)},e)v_{t_1,t_2}=v^n_{t_1,t_2}$ is, as desired, a continuous function with compact support.
\item Since we work in the span of $v_{t_1,t_2}$, there is nothing to check here.

\item Let us check that $(g,g)v_{t_1,t_2}=v_{t_1,t_2}$ for any $g\in\GLB$. Indeed, since $\pi_{t_1,t_2}$ is unitary
representation,
\begin{multline*}
 ((g,g)v_{t_1,t_2},f)=(v_{t_1,t_2},(g^{-1},g^{-1})f)\\=\int_{Gr(V)} f(gX,gX)
 \sqrt{\frac{(g,g)\rho_{t_1,t_2}}{\rho_{t_1,t_2}}(X,X)} \eta_{t_1,t_2}(dX)\\ \stackrel{Y=gX}{=} \int_{Gr(V)} f(Y,Y)
 \eta_{t_1,t_2}(dY)=(v_{t_1,t_2},f)
\end{multline*}
\item We have already shown that $I_\B v_{t_1,t_2}= v^0_{t_1,t_2}$. Then
$$
 (I_\B v_{t_1,t_2},v_{t_1,t_2})=\int_{Gr(V)} \eta_{t_1,t_2}(dX) = 1.
$$
\end{enumerate}
Now we compute the  trace of this representation.

For $g\in\GL{n}$ we have
\begin{multline*}
 (\pi_{t_1,t_2}(I_g,e)v_{t_1,t_2},v_{t_1,t_2})= (\pi_{t_1,t_2}(g\cdot I_{e(n)},e)v_{t_1,t_2},v_{t_1,t_2})=(\pi_{t_1,t_2}(g,e)
 v^n,v)\\=\mu(\BI_n)\sum_{X\in Gr(V^n)}(\pi_{t_1,t_2}(g,e)q^{-\dim(X)(\dim(X)+1)/2+\sum_k k (Sym(X)_k)}\mathcal U^n(X,X),v)
\end{multline*}
By the definition for $X,Y\in Gr(V_n)$ we have
$$
 (\mathcal U^n(X,Y),v)=\begin{cases} q^{m(m+1)/2-\sum_k k (Sym(X)_k)} t_1^m t_2^{n-m} ,\quad X=Y,
 \\ 0,\text{ otherwise,}\end{cases}
$$
where $m=\dim(X)$. It follows that
\begin{equation}
\label{eq_x2}
 (\pi_{t_1,t_2}(I_g,e)v_{t_1,t_2},v_{t_1,t_2}) =\mu(\BI_n)\sum_{X\in Gr(V^n): gX=X}  t_1^{\dim(X)} t_2^{n-\dim(X)}.
\end{equation}
Our next aim is to decompose the function $\chi(g):=(\pi_{t_1,t_2}(I_g,e)v,v)$ into the sum of
matrix traces of irreducible representations of $\GL{n}$. We rewrite \eqref{eq_x2}as
$$
 \chi(g)=\sum_{m=0}^n t_1^n t_2^{n-m} \psi_m(g)
 $$
with
$$
 \psi_m(g)=\mu(\BI_n)\#\{X\in Gr(V^n): \dim(X)=m,\, gX=X\}
$$
Let $\Psi_m$ be the natural representation of $\GL{n}$ in the space of functions on the set of all
subspaces of $V_n$ of dimension $m$. If we view $\Psi_m$ as a representation of $\A(\GLB)_n$, then
its matrix trace $Trace(\Psi_m(I_g)$ is precisely $\psi_m(g)$; the prefactor $\mu(\BI_n)$ arises
from the identification of $\A(\GLB)$ and the group algebra of $\GL{n}$ (see Proposition
\ref{prop_A_n_as_group_algebra}).

The decomposition of $\Psi_m$ into irreducible representations is well known (see e.g.\
\cite{Steinberg}). We have
\begin{equation}
\label{eq_flag_rep_decomp}
 \Psi_m=\bigoplus_{\lambda} K_{(n-m,m),\lambda} \psi^\lambda,
\end{equation}
where $\psi^\lambda$ is the irreducible unipotent representation of $\GL{n}$ indexed by the Young
diagram with $n$ boxes $\lambda$ and $K_{(n-m,m),\lambda}$ is the \emph{Kostka number}. These
numbers do not depend on $q$ and coincide with similar coefficients for the decomposition of the
representation of symmetric group $\Sym(n)$ in the space of functions on the set of all
$m$-element subsets of the set $\{1,2,\dots,n\}$. It is convenient for us to use yet another
definition related to the symmetric functions:
$$
 h_m h_{n-m} =\sum  K_{(n-m,m),\lambda} s_\lambda,
$$
where $h_m$ is the complete symmetric function and $s_\lambda$ is the Schur function. The last
formula can be shown to be equivalent to the definition of Kostka numbers through
\eqref{eq_Kostka}.

\eqref{eq_flag_rep_decomp} implies that
$$
 \chi=\sum_\lambda \left(\sum_m K_{(n-m,m),\lambda} t_1^{m}t_2^{n-m}\right) \chi^\lambda,
$$
where $\chi^\lambda$ is the conventional character (matrix trace) of the unipotent representation
indexed by the Young diagram with $n$ boxes $\lambda$.

Next, observe that
\begin{multline*}
\sum_m K_{(n-m,m),\lambda} t_1^{m}t_2^{n-m}=\sum_{m\le n/2} K_{(n-m,m),\lambda}
m_{(n-m,m)}(t_1,t_2)= s_\lambda (t_1,t_2),
\end{multline*}
where $m_\mu$ is the monomial symmetric function indexed by the Young diagram $\mu$ and for a
symmetric function $f(x_1,x_2,\dots)$ the notation $f(t_1,t_2)$ means the specialization
$f(t_1,t_2,0,0,\dots)$.

We arrive at the final formula
$$
 \chi=\sum_\lambda s_\lambda(t_1,t_2) \chi^\lambda,
$$
which coincides with the decomposition of the extreme unipotent  trace of $\A(\GLB)$ with
 parameters $\alpha_1=t_1$, $\alpha_2=t_2$ given in Theorem \ref{theorem_characters_of_GLB}.
\end{proof}

\subsection{Representations related to spaces of flags}
\label{subsect_flags}

The results of Section \ref{Subsect_grassman} can be generalized to give a construction for the
representations of $\GLB$ corresponding to the extreme unipotent representation with arbitrary
sequence of parameters $\alpha_i$.

Suppose that we have $r$ non-zero parameters $\alpha_i$:
$$
 \alpha_1=t_1,\, \alpha_2=t_2,\dots,\, \alpha_r=t_r,
$$
with $r$ being either finite or $r=+\infty$.

Let $Fl_r(V)$ denote the space of all length $r-1$ \emph{decreasing} flags in $V$, i.e.\
$$
 Fl_r(V)=\{X_1\supseteq X_2\supseteq \dots\supseteq X_{r-1}\mid X_i\in Gr(V)\}.
$$
In particular, $Fl_2(V)=Gr(V)$. Note that, in principle, we allow non-strict inclusions  in the
above definition, e.g.\ $X_1$ might be equal to $X_2$. However, with respect to the measures we
use, the inclusions turn out to be almost surely strict. If $r=\infty$, then we also demand that
$\bigcap_i X_i =\{0\}$.

 The group $\GLB$ naturally acts in $Fl_r(V)$ and, similarly
to the grassmanian case, we define:
$$
 Fl_r^2(V)=\{(F,H)\in Fl_r(V)\times Fl_r(V)\mid \exists g\in\GLB: gF=H\}.
$$

For a flag $F\in Fl_r(V)$ let $F^{(i)}$, $i=1,\dots,r-1$ denote its subspaces, i.e.\
$F=F^{(1)}\supseteq\dots\supseteq F^{(r-1)}$. The symbol $Sym(F)$ of the flag $F\in Fl_k(V)$ is
defined as the coordinate-wise sum of the symbols of $F^{(i)}$:
$$
 Sym(F)=\left(\sum_{i=1}^{r-1} Sym(F^{i})_1, \sum_{i=1}^{r-1} Sym(F^{i})_2,\dots\right).
$$
Note that this sum is well-defined even for $r=\infty$ as follows from the condition $\bigcap_i
X_i =\emptyset$.

Let $\mathcal N_r$ denote the set $\{0,1\dots,r-1\}$.  For a sequence $f\in \mathcal N_r^{\infty}$
let $Sch(f)$ denote the set of all flags in $Fl_r(V)$ with symbol $f$. $Sch(f)$ has a
distinguished coordinate flag, which we denote $C(f)$. $Sch(f)$ is a $\B$-orbit and, thus, has a
unique $\B$-invariant probability measure.

Next, we define the map $\phi_r$:
$$
 \phi_r: \mathcal N_r^{\infty} \times \B \to Fl_r(V),\quad \phi_r(x,g)=g C(x).
$$
Let $\eta_{t_1,\dots,t_r}$ be the $\phi_r$-pushforward of the product of Bernoulli measure $P$ on
$\mathcal N_r^\infty$ with $Prob(k)=t_k$ and Haar measure on $\B$.

Let $\mathcal T_r$ denote the set of pairs of sequences $(x,y)\in \mathcal N_r^{\infty}\times
\mathcal N_r^{\infty}$ such that $x$ is a finite permutation of $y$.

$\mathcal T_r$ is equipped with sigma algebra spanned by the sets
$A_{i_1,\dots,i_n}^{j_1,\dots,j_n}$, where $i_1,\dots, i_n$ and $j_1,\dots, j_n$ are two sequences
from $\mathcal N_r^n$ which are permutations of each other
$$
 A_{i_1,\dots,i_n}^{j_1,\dots,j_n}=\{(x,y)\in \mathcal T_r\mid x_1=i_1,\dots, x_n=i_n,\, y_1=j_1,\dots,
 y_n= j_n,\, y_k=x_k, \text{ for } k>n\}.
$$

Define the measure $R_{t_1,\dots,t_r}$ on $\mathcal T_r$ setting
$$
 R_{t_1,\dots,t_r}(A_{i_1,\dots,i_n}^{j_1,\dots,j_n})=\prod_{\ell=1}^n t_{i_\ell}.
$$

Let $\psi_r$ be the map
$$
 \psi_r: \mathcal T_r \times \B\times \B \to Fl_r^2(V),\quad \phi_r((x,y),g,h)=(g C(x), hC(y)).
$$
We define the measure $\rho_{t_1,\dots,t_r}$ on $Fl_r(V)$ as the $\psi_r$-pushforward of the
measure $R_{t_1,\dots,t_r}\otimes \mu_\B \otimes \mu_\B$.

Similarly to Proposition \ref{prop_eta_quasiinvar} and Corollary \ref{corol_rho_quasiinvar} one
proves that $\eta_{t_1,\dots,t_r}$ is $\GLB$-quasiinvariant and $\rho_{t_1,\dots,t_r}$ is
$\GLB\times\GLB$--quasiinvariant.

Therefore there is a natural unitary representation $\pi_{t_1,\dots,t_r}$ of $\GLB\times\GLB$ in
$L_2(Fl_r(V),\rho_{t_1,\dots,t_r})$.

Similarly, to $Gr^2(V)$ we define a topological structure of $Fl_r(V)$ and consider the space
$C^0(Fl_r^2(V))$ of continuous functions with compact support. Let $v_{t_1,\dots,t_r}\in
C^0\big(Fl_r^2(V)\big)^*$ denote the following linear functional
$$
 v_{t_1,\dots,t_r}:C^0(Fl^2_r(V))\to\mathbb C,\, v_{t_1,\dots,t_r}(f)=\int_{Fl_r(V)} f(X,X) \eta_{t_1,\dots,t_r}(dX).
$$
We further set $\hat \pi_{t_1,\dots,t_r}$ to be the restriction of $\pi_{t_1,\dots,t_r}$ on the
$L_2$--closure of the intersection of $C^0(Fl_r^2(V))$ and cyclic span $Sp(v_{t_1,\dots,t_r})$ of
$v_{t_1,\dots,t_r}$.

\begin{theorem}
\label{theorem_construction_flags}
 The triplet $(\hat \pi_{t_1,\dots,t_r}, C^0(Fl^2_r(V))\cap Sp(v_{t_1,\dots,t_r}), v_{t_1,\dots,t_r})$ is a generalized spherical
 representation. Its spherical function gives the extreme unipotent  trace of $\A(\GLB)$ with
 parameters $\alpha_1=t_1$,\dots, $\alpha_r=t_r$. The restriction of this representations on the first
 component is von Neumann factor representation of type $II_\infty$.
\end{theorem}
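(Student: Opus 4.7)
The plan is to follow very closely the strategy of Theorem \ref{Theorem_construction_gras}, replacing the grassmanian $Gr(V)$ with the flag space $Fl_r(V)$ and Schubert cells labeled by $\{0,1\}$-sequences by Schubert cells labeled by $\mathcal N_r$-sequences. The two quasi-invariance statements needed -- that $\eta_{t_1,\dots,t_r}$ is $\GLB$-quasi-invariant and $\rho_{t_1,\dots,t_r}$ is $(\GLB\times\GLB)$-quasi-invariant with cocycles expressed through differences of the symbol $Sym(F)$ -- are direct analogues of Proposition \ref{prop_eta_quasiinvar} and Corollary \ref{corol_rho_quasiinvar}; their proofs reduce to the finite-dimensional analogue of formula \eqref{eq_quasiinv_eta_finite} for measures on $Fl_r(V_n)$, obtained from the identity $|Sch^n(x)|=q^{\mathrm{inv}(-x)}$ now read for $\mathcal N_r$-sequences. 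This yields a well-defined unitary representation $\pi_{t_1,\dots,t_r}$ of $\GLB\times\GLB$ in $L_2\bigl(Fl_r^2(V),\rho_{t_1,\dots,t_r}\bigr)$, and one passes to its restriction on $C^0(Fl_r^2(V))$ exactly as before.

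Next I would check the six axioms of a generalized spherical representation. Axioms (1), (2), (4), (5), (6) carry over verbatim from the proof of Theorem \ref{Theorem_construction_gras}. Axiom (3) is the technical heart: one must show that $(I_{e(n)},e)v_{t_1,\dots,t_r}$ is a continuous compactly supported function on $Fl_r^2(V)$. I would mimic the grassmanian computation, integrating $v_{t_1,\dots,t_r}$ against the indicator $\mathcal U^k(I,J)$ of a basic neighborhood, and arrive at an explicit analogue of the function $v^n_{t_1,t_2}$ supported on pairs of flags $(F,H)\in Fl_r^2(V)$ satisfying $F^{(i)}\cap V_n=H^{(i)}\cap V_n$ for all $i$ and $\dim(F^{(i)}\cap V_\ell)=\dim(H^{(i)}\cap V_\ell)$ for $\ell>n$, with density equal to a product of $q$-powers in the finite symbol $Sym^n(F)$.

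The main computation is then to identify the spherical function with the unipotent trace of parameters $(\alpha_i=t_i,\beta_i=0)$ given in Theorem \ref{theorem_characters_of_GLB}. Repeating the manipulation leading to \eqref{eq_x2}, the value of $\chi(g):=(\pi_{t_1,\dots,t_r}(I_g,e)v_{t_1,\dots,t_r},v_{t_1,\dots,t_r})$ for $g\in\GL{n}$ reduces (up to the prefactor $\mu(\BI_n)$) to
\[
\chi(g)=\sum_{F\in Fl_r(V_n):\,gF=F}\ \prod_{k=1}^{r} t_k^{\dim F^{(k-1)}/F^{(k)}},
\]
with the convention $F^{(0)}=V_n$, $F^{(r)}=0$. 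Grouping flags by their dimension composition $\mu=(\mu_1,\dots,\mu_r)$ identifies this with $\sum_\mu \bigl(\prod_k t_k^{\mu_k}\bigr) \mathrm{Trace}(\Psi_\mu(I_g))$, where $\Psi_\mu$ is the natural representation of $\GL{n}$ on functions on flags of profile $\mu$. The classical parabolic-induction decomposition $\Psi_\mu=\bigoplus_\lambda K_{\mu,\lambda}\pi^\lambda$ (see \cite{Steinberg}, \cite[Section I.6]{M}) and the defining expansion $s_\lambda=\sum_\mu K_{\mu,\lambda}\,m_\mu$ then give
\[
\chi=\sum_{\lambda\in\Y_n} s_\lambda(t_1,\dots,t_r)\,\chi^\lambda,
\]
which matches the formula \eqref{eq_extreme_GLB} for $f=\emptyset$, $\alpha=(t_1,\dots,t_r,0,\dots)$, $\beta=0$. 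Finally, the factor type follows as in the grassmanian case: extremality of the trace guarantees a von Neumann factor, the absence of a $\sigma$-finite $\GLB$-invariant measure equivalent to $\eta_{t_1,\dots,t_r}$ (proved by the same inversions-of-cells argument as Proposition \ref{Prop_no_equiv}) rules out type $I_\infty$, and the existence of the finite trace on $\A(\GLB)$ rules out type $III$; so we obtain type $II_\infty$.

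The principal obstacle is the case $r=\infty$, where the coordinate flag $C(f)$ has infinitely many components, the Bernoulli measure on $\mathcal N_\infty^\infty$ must be handled under the constraint $\sum t_i\le 1$, and the condition $\bigcap_i F^{(i)}=\{0\}$ has to be shown to hold $\eta_{t_1,t_2,\dots}$-almost surely so that $Fl_\infty(V)$ is nontrivially supported. I would handle this by an approximation argument: truncate to $r_N$-flags, verify that the truncated spherical functions converge pointwise on $\A(\GLB)$ to the desired trace (the expansion via Schur functions $s_\lambda(t_1,\dots,t_{r_N})\to s_\lambda(t_1,t_2,\dots)$ is automatic for a fixed $n$ since only finitely many $\lambda$ of size $n$ contribute), and check that the distributions $v_{t_1,\dots,t_{r_N}}$ converge in the weak sense on $C^0$-functions with support bounded in the finite corner. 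The rest of the argument, in particular that $(I_{e(n)},e)v$ is still compactly supported, then follows from the same explicit formula, the compactness being inherited from the finite-dimensional corner $V_n$.
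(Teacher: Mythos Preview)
Your proposal is correct and follows precisely the route the paper intends: the paper's own proof is the single sentence ``The proof repeats that of Theorem \ref{Theorem_construction_gras},'' and you have spelled out exactly that repetition, including the computation $\chi=\sum_\lambda s_\lambda(t_1,\dots,t_r)\chi^\lambda$ via the decomposition $\Psi_\mu=\bigoplus_\lambda K_{\mu,\lambda}\pi^\lambda$. Your extra care with the $r=\infty$ case goes beyond what the paper writes but is in the same spirit; note only that since $\mathcal N_\infty=\{0,1,2,\dots\}$ consists of finite integers, the condition $\bigcap_i F^{(i)}=\{0\}$ holds automatically for coordinate flags (hence on every Schubert cell), so no limiting argument is strictly needed there.
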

The proof repeats that of Theorem \ref{Theorem_construction_gras}.

\section{Biregular representation of $\GLB$}
\label{Section_harmonic}

Recall that $\GLB$ is a locally compact group with biinvariant Haar measure $\mu_{\GLB}$ and
consider the Hilbert space $H=L_2(\GLB,\mu_{\GLB})$. Let $\pi_{Reg}$ denote the natural
representation of $(\GLB\times\GLB)$ in $H$ by left and right translations. Let $H_1\subset H$ be
the subspace $C[\GLB]$ of all continuous functions on $\GLB$ and let $\delta_e$ denote the
delta-function at the identity element of $\GLB$:
$$
 \delta_e(f)=f(e), \quad f\in H_1.
$$
\begin{theorem}[On the structure of biregular representation]
\label{Theorem_biregular}
 The triplet $(\pi_{Reg},C[\GLB],\delta_e)$ is a generalized spherical representation of $\GLB$. Its
 spherical function $\chi$ has the following decomposition into extreme traces of $\A(\GLB)$:
 $$
  \chi=\sum_{f\in\CY'} C(f)\chi^{0,1^{(q)},f},
 $$
 where $1^{(q)}$ means the geometric series $((1-q^{-1}), (1-q^{-1})q^{-1},
 (1-q^{-1})q^{-2},\dots)$ and
  $$
  C(f)= (q-1)^{|f|}\prod_{c\in\C_d} \frac{q^{d n(s(c))}} {\prod_{\square\in s(c)} (q^{d h(\square)}-1)}.
  $$
\end{theorem}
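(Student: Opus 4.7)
The plan is to verify the spherical representation axioms, compute the trace directly, and then match the decomposition coefficient by coefficient on each level $\A(\GLB)_n$.

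First, I would check the axioms. The crucial observation is that for $a \in \A(\GLB)$ the distribution $\pi_{Reg}(a,e)\delta_e$ is just the continuous compactly supported function $a$ itself (as an $L_2$ element), since for any $f \in C[\GLB]$,
\[
\bigl(\pi_{Reg}(a,e)\delta_e\bigr)(f) = \delta_e\bigl(\pi_{Reg}(a^*,e)f\bigr) = [\pi_{Reg}(a^*,e)f](e) = \int_{\GLB} \overline{a(h^{-1})} f(h^{-1})\, d\mu_\GLB(h),
\]
which pairs as $(f,a)$ in $L_2$. Thus $\pi_{Reg}(a,e)\delta_e \in H_1$ (and analogously for $(e,a)$), the cyclic span contains a dense subspace of $L_2(\GLB,\mu_\GLB)$ (since $\A(\GLB)$ is dense in $L_1$), the continuous inclusion $C[\GLB] \hookrightarrow L_2$ is standard, diagonal $\GLB$-invariance of $\delta_e$ is immediate from $f(g\cdot e\cdot g^{-1}) = f(e)$, and $(\pi(I_\B)\delta_e,\delta_e) = I_\B(e) = 1$.

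Second, the spherical function is computed by $\chi(a) = (\pi_{Reg}(a,e)\delta_e,\delta_e) = a(e)$ for $a \in \A(\GLB)$. On the basis of cylindrical indicators: for $g \in \GL{n}$,
\[
\chi(I^\GLB_g) = I^\GLB_g(e) = \begin{cases} 1, & g = e(n), \\ 0, & \text{otherwise.} \end{cases}
\]
Using the isomorphism $\A(\GLB)_n \simeq \mathbb C(\GL{n})$ with $e_g \mapsto (q-1)^n q^{n(n-1)/2} I^\GLB_g$, this becomes (as a functional on $\mathbb C(\GL{n})$) the value $c_n \delta_{g,e(n)}$ with $c_n = (q-1)^n q^{n(n-1)/2}$. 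Combined with the standard decomposition of the delta function at identity, $\delta_e = |\GL{n}|^{-1} \sum_{\phi \in \CY_n} \dim_q(\phi)\, \chi^\phi_{\GL{n}}(\cdot)$, and the identity $|\GL{n}|/c_n = [n]_q!$, I would obtain
\[
\chi\!\rule[-2.5mm]{.4pt}{5mm}_{\,\A(\GLB)_n} \;=\; \frac{1}{[n]_q!} \sum_{\phi \in \CY_n} \dim_q(\phi)\, \chi^\phi.
\]

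Third, I would match this against the claimed decomposition $\sum_{f \in \CY'} C(f) \chi^{0,1^{(q)},f}$. By Theorem \ref{theorem_characters_of_GLB}, restrictions to $\A(\GLB)_n$ separate the $\CY'$-components, and only finitely many $f \in \CY'$ with $|f| \le n$ contribute; the coefficient of $\chi^\phi$ with $\phi = f + E_1(\lambda)$, $f \in \CY'$, $\lambda \in \Y_{n-|f|}$, in the proposed sum is exactly $C(f)\,\Sp_{0,1^{(q)},1}[s_\lambda]$. Thus the theorem reduces to the identity
\[
C(f)\,\Sp_{0,1^{(q)},1}[s_\lambda] \;=\; \frac{\dim_q(f+E_1(\lambda))}{[n]_q!}.
\]
Since $\beta = 1^{(q)}$ is summable with sum $1$, one has $\Sp_{0,1^{(q)},1}[s_\lambda] = s_{\lambda'}\!\bigl((1-q^{-1})q^{-(i-1)}\bigr) = (1-q^{-1})^{|\lambda|} s_{\lambda'}(1,q^{-1},q^{-2},\dots)$, and the principal specialization $s_{\lambda'}(1,q^{-1},q^{-2},\dots) = q^{-n(\lambda')}\!\prod_{\square \in \lambda}(1-q^{-h(\square)})^{-1}$ together with $\sum_{\square}h(\square) = n(\lambda) + n(\lambda') + |\lambda|$ converts the left-hand side to
\[
C(f) \cdot \frac{(q-1)^{|\lambda|} q^{n(\lambda)}}{\prod_{\square \in \lambda}(q^{h(\square)}-1)}.
\]
Comparing with the $q$-hook formula of Theorem \ref{theorem_irreps_of_gl} applied to $f + E_1(\lambda)$ and using $(q^n-1)\cdots(q-1) = (q-1)^n [n]_q!$ yields the required equality, after the factor at $c="x-1"$ in $\dim_q$ is separated out as the $\lambda$-factor and the remaining product over $c \ne "x-1"$ reconstructs $C(f)$ (since $f(``x-1") = \emptyset$).

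The main obstacle is bookkeeping rather than any substantive difficulty: one must carefully track the three normalization constants (the passage $e_g \leftrightarrow I^\GLB_g$, the two different uses of $\chi^\phi$ as a character of $\GL{n}$ versus as a trace of $\A(\GLB)_n$, and the normalization of the Haar measure by $\mu_\GLB(\B)=1$) so that they conspire to give $[n]_q!$, and reconcile the hook/content identity with the principal specialization formula. Once these normalizations are pinned down, the theorem follows from classical identities for Schur functions and the $q$-hook formula, with no representation-theoretic input beyond the classification in Theorem \ref{theorem_characters_of_GLB}.
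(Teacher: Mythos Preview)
Your proposal is correct and follows essentially the same route as the paper: compute $\chi(I^\GLB_g)=\delta_{g,e(n)}$, recognize this (after the normalization $e_g\mapsto (q-1)^n q^{n(n-1)/2}I^\GLB_g$) as $1/[n]_q!$ times the regular character of $\GL{n}$, decompose into irreducibles via the $q$-hook formula, and identify the $``x-1"$-factor with $\Sp_{0,1^{(q)},1}[s_\lambda]$ using the principal specialization of $s_{\lambda'}$. The only difference is that you spell out the verification of the six generalized-spherical-representation axioms, which the paper leaves implicit.
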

\begin{proof}
  Observe that $\pi_{Reg}(I_{e(n)},e)\delta_e=I_{e(n)}$. Therefore, for $g\in \GL{n}$ we have
  $$
  \pi_{Reg}(I_{g},e)\delta_e=\begin{cases} 1, g=e(n),\\ 0,\text{ otherwise.} \end{cases}
  $$
  It follows that the restriction of $\chi$ to $\A(\GLB)_n$ (under the identification $\A(\GLB)_n\simeq \mathbb C(\GL{n})$
  is the character of the regular representation of $\GL{n}$ multiplied by the constant
  $$
  \frac{(q-1)^n q^{n(n-1)/2}}{|\GL{n}|}=\prod_{i=1}^n \frac{q-1}{q^i-1}.
  $$
  Using the well-known decomposition of the regular representation of a finite group into irreducibles we get
  $$
   \chi \rule[-2.5mm]{.4pt}{5mm}_{\,\A(\GLB)_n}=\prod_{i=1}^n \frac{q-1}{q^i-1} \cdot \sum_{s\in \CY_n} {\rm dim}_q(s) \chi^s,
  $$
  where ${\rm dim}_q(s)$ is the dimension of the irreducible representation of $\GL{n}$ indexed by
  $s$ which can be computed using the following $q$-analogue of the hook formula:
  $$
   {\dim}_q(s)= (q^n-1)\dots(q-1)\prod_{d\ge 1} \prod_{c\in\C_d} \frac{q^{d n(s(c))}} {\prod_{\square\in s(c)} (q^{d
   h(\square)}-1)}.
  $$
  Extracting the factor with $c=``x-1"$ and applying the identity
  (here we used \cite[Chapter I, Section 3, (3.8)]{M} and \cite[Chapter I, Section 3, Exercise 2]{M})
 \begin{multline*}
 \Sp_{0,1^{(q)},1}[s_\lambda]=
  s_{\lambda'}\biggl(1-q^{-1}, q^{-1}(1-q^{-1}),q^{-2}(1-q^{-1}),\dots\biggr)\\=(q-1)^{|\lambda|}\frac{q^{n(\lambda)}}
  {\prod_{\square\in \lambda} (q^{h(\square)}-1)}
  \end{multline*}
  we arrive at
  \begin{equation}
  \label{eq_x10}
   \chi \rule[-2.5mm]{.4pt}{5mm}_{\,\A(\GLB)_n}=  \sum_{f\in \CY'} C(f) \sum_{\lambda\in \Y_{n-|f|}}
   \chi^{f+E_1(\lambda)} \Sp_{0,1^{(q)},1}[s_\lambda],
  \end{equation}
  with
  $$
  C(f)= (q-1)^{|f|}\prod_{c\in\C_d} \frac{q^{d n(s(c))}} {\prod_{\square\in s(c)} (q^{d h(\square)}-1)}.
  $$
  It remains to compare \eqref{eq_x10} with \eqref{eq_extreme_GLB}.
\end{proof}

\section{Appendix: $\GLU$}

There is another distinguished infinite-dimensional group over finite field, which is a group of
almost uni-uppertriangular matrices.

\begin{definition} $\GLU$ is a subgroup of $\GLB$ defined through
$$
 \GLU=\{[X_{ij}]\in \GLB : X_{ii}=1\text{ for large enough }i\}.
$$
\end{definition}

The whole theory for $\GLU$ is very much parallel to that of $\GLB$.

The group $\GLU$ is an inductive limit of groups $\GLU_n$: $\GLU=\bigcup_{n=0}^{\infty} \GLU_n$.
$$
 \GLU_n=\{ [X_{ij}]\in\GLU\mid X_{ij}=0\text{ if both } i>j\text{ and }i>n; \quad X_{ii}=1 \text{ for }i>n\},
$$
in particular, $\GLU_0=\U\subset \GLU$ is the subgroup of all unipotent upper-triangular matrices.

Each $\GLU_n$ is compact group (with topology of pointwise convergence of matrix elements). $\GLU$
as an inductive limit of $\GLU_n$ is a locally compact topological group. Let $\mu^\GLU$ denote
the biinvariant Haar measure on $\GLU$ normalized by the condition $\mu^{\GLU}(\U)=1$.

The space $L_1(\GLU,\mu^\GLU)$ is a Banach involutive algebra with multiplication given by the
convolution.

\begin{definition}
$\A(\GLU)\subset L_1(\GLU,\mu^\GLU)$ is defined as the subalgebra formed by all locally constant
functions with compact support. In other words, a function $f(X)$ belongs to $\A(\GLU)$ if their
exists $n$ and a function $f_n: \GL{n}\to\mathbb C $ such that:
$$
f(X)=\begin{cases} f_n(X^{(n)}),\text{ if } X\in\GLU_n, \\0,\text{ otherwise.}
\end{cases}
$$
\end{definition}
Clearly, $\A(\GLU)$ is dense in $L_1(\GLU,\mu^\GLU)$. Note that $\A(\GLU)$ does not have a unit
element.

As and above, we call a (linear) function $\chi: \A(\GLU) \to \mathbb C$ a trace of $\A(\GLU)$ if
\begin{enumerate}
 \item $\chi$ is central, i.e.\ $\chi(WU)=\chi(UW)$,
 \item $\chi$ is positive definite, i.e.\ $\chi(W^*W)\ge 0$ for any $W\in \A(\GLU)$,
\end{enumerate}

\noindent {\bf Remark. } It is impossible to normalize the  traces, i.e.\ for any $a\in\A(\GLU)$
there exists a  trace $\chi$ such that $\chi(a)=0$.

\bigskip

For any matrix $g\in\GL{n}$ let $I^\GLU_g\in\A(\GLU)$ denote the function
$$
 I^\GLU_g(X)=
 \begin{cases}
  1, \text{ if } X\in\GLU_n \text{ and }X^{(n)}=g, \\
  0, \text{ otherwise.}
 \end{cases}
$$
Let $e(n)$ denote the identity element of $\GL{n}$. Then
$$I^{\GLU}_g(X)=I^{\GLU}_{e(n)}(Xg^{-1})=g\cdot I^{\GLU}_{e(n)}.$$

Denote $\A(\GLU)_n=<I^\GLU_g\mid g\in\GL{n}>$. The following proposition is straightforward
\begin{proposition}
\label{prop_A_n_as_group_algebra_GLU}
 $\A(\GLU)_n$ is a subalgebra of $\A(\GLU)$ isomorphic to the conventional group algebra $\mathbb
 C(\GL{n})$. If $e_g$ denotes the natural basis of $\mathbb
 C(\GL{n})$ then the isomorphism is given
$e_g \to q^{n(n-1)/2} I^\GLU_g$.
\end{proposition}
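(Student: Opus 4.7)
The strategy is exactly parallel to that of Proposition~\ref{prop_A_n_as_group_algebra}, once we identify the correct normalization constant via the Haar measure of the relevant cylindrical set. The only numerical difference with the $\GLB$ case is the absence of the factor $(q-1)^n$, which reflects the fact that we have forced the diagonal entries below row $n$ to equal $1$ rather than be an arbitrary nonzero element of $\F$.

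First I would check that $\GLU_n$ is a subgroup of $\GLU$: if $X,Y\in\GLU_n$ and $i>n$, then the $i$-th row of $X$ is supported on columns $\geq i$ with $X_{ii}=1$, and similarly for $Y$, from which a direct computation shows $(XY)_{ij}=0$ for $i>j$, $i>n$ and $(XY)_{ii}=1$ for $i>n$; closure under inversion is analogous. Consequently $\Cyl^\GLU_{e(n)}=\{X\in\GLU_n: X^{(n)}=I_n\}$ is a subgroup of $\U$, and for any $g\in\GL{n}$ we have $\Cyl^\GLU_g=g\cdot \Cyl^\GLU_{e(n)}$. The index $[\U:\Cyl^\GLU_{e(n)}]$ equals the number of $n\times n$ unipotent upper-triangular matrices over $\F$, which is $q^{n(n-1)/2}$; since $\mu^\GLU(\U)=1$, left-invariance of the Haar measure gives
$$
 \mu^\GLU\bigl(\Cyl^\GLU_g\bigr)=q^{-n(n-1)/2},\qquad g\in\GL{n}.
$$

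Next I would compute the convolution $I^\GLU_g*I^\GLU_h$ for $g,h\in\GL{n}$. For $y\in\GLU_n$, the block structure implies $(y^{-1})^{(n)}=(y^{(n)})^{-1}$ and, more generally, $(y^{-1}x)^{(n)}=(y^{(n)})^{-1}x^{(n)}$ for any $x\in\GLU_n$. Hence the integrand $I^\GLU_g(y)I^\GLU_h(y^{-1}x)$ can be nonzero only when $y^{(n)}=g$ and $x^{(n)}=gh$, in which case (using that $\GLU_n$ is a subgroup) the element $y^{-1}x$ automatically belongs to $\GLU_n$ with $(y^{-1}x)^{(n)}=h$. If $x\notin\GLU_n$, one verifies using the same block decomposition that the constraint $y^{-1}x\in\GLU_n$ combined with $y\in\GLU_n$ would force $x\in\GLU_n$, so the convolution again vanishes. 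Therefore
$$
 (I^\GLU_g*I^\GLU_h)(x)=\mu^\GLU\bigl(\Cyl^\GLU_g\bigr)\cdot I^\GLU_{gh}(x)=q^{-n(n-1)/2}\,I^\GLU_{gh}(x).
$$

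Setting $\tilde J_g:=q^{n(n-1)/2}I^\GLU_g$, the identity above becomes $\tilde J_g*\tilde J_h=\tilde J_{gh}$, so the linear map $\mathbb C(\GL{n})\to\A(\GLU)_n$ sending $e_g\mapsto\tilde J_g$ is an algebra homomorphism. The functions $\{I^\GLU_g\}_{g\in\GL{n}}$ have pairwise disjoint supports (the cylinders $\Cyl^\GLU_g$ are disjoint), so they are linearly independent; they span $\A(\GLU)_n$ by definition. Hence the map is a bijection, and $\A(\GLU)_n\subset\A(\GLU)$ is a subalgebra isomorphic to $\mathbb C(\GL{n})$. The only step that requires genuine care—and which I would expect to be the most delicate—is the verification that no contributions to the convolution arise from $x\in\GLU_m$ with $m>n$; this is handled by observing that, because $\GLU_n$ is a subgroup and $(y^{-1}x)^{(n)}$ already determines $y^{-1}x\in\GLU_n$ modulo rows $>n$ which must remain unipotent upper-triangular, no such additional contributions exist.
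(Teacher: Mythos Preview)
Your argument is correct and is precisely the straightforward verification the paper has in mind: the paper states this proposition (like its $\GLB$ analogue, Proposition~\ref{prop_A_n_as_group_algebra}) without proof, calling it ``straightforward,'' and your computation of the Haar measure of the cylinder $\Cyl^\GLU_{e(n)}$ via the index $[\U:\Cyl^\GLU_{e(n)}]=q^{n(n-1)/2}$, followed by the block-matrix check that $(y^{-1}x)^{(n)}=(y^{(n)})^{-1}x^{(n)}$ for $y,x\in\GLU_n$, is exactly the intended route. The only place your write-up is slightly over-cautious is the closing paragraph about contributions from $x\in\GLU_m$ with $m>n$: once you know $\GLU_n$ is a subgroup, $y\in\GLU_n$ and $y^{-1}x\in\GLU_n$ immediately force $x\in\GLU_n$, so no separate argument is needed.
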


Observe that $\A(\GLU)_n\subset \A(\GLU)_{n+1}$. In the basis $I^\GLU_g$ this inclusion is given
by
$$
 i_n: I^\GLU_g \to \sum_{h\in Ext^\GLU(g)} I^\GLU_h,
$$
where for $g\in\GL{n}$ we have
\begin{multline*}
 Ext^\GLU(g)=\biggl\{[h_{ij}]\in\GL{n+1}\mid\\ h^{(n)}=g\text{ and }
 h_{n,1}=h_{n,2}=\dots={h_{n,n-1}}=0, \, h_{n,n}=1\biggr\}.
\end{multline*}

The algebra $\A(\GLU)$ can be identified with the inductive limit of algebras $\A(\GLU)_n$:
$$
 \A(\GLU)=\ilim_{n\to\infty} \A(\GLU)_n = \bigcup_{n} \A(\GLU)_n.
$$
Thus, $\A(\GLU)$ is a \emph{locally semisimple} algebra.

\begin{proposition}
\label{prop_decomposition_of_restriction_GLU}
 The set of all  traces of $\A(\GLU)_n$ is a simplicial cone spanned by  traces
 $\chi^f$, $f\in\CY_n$. In other words, if $\chi^n$ is a  trace of $\A(\GLU)_n$, then there exist unique
 nonnegative coefficients $c(f)$ such that
 $$
  \chi^n(\cdot)=\sum_{f\in\CY_n} c(f) \chi^f(\cdot).
 $$
\end{proposition}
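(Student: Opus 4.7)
The plan is to reduce Proposition \ref{prop_decomposition_of_restriction_GLU} to the well-known classification of characters of the finite group $\GL{n}$, in exactly the same manner as Corollary \ref{cor_decomposition_of_restriction} was established for $\A(\GLB)_n$.

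First, I would invoke Proposition \ref{prop_A_n_as_group_algebra_GLU} to identify $\A(\GLU)_n$ with the conventional group algebra $\mathbb{C}(\GL{n})$ via the isomorphism $e_g \mapsto q^{n(n-1)/2} I^\GLU_g$. Since this is an algebra isomorphism, the two axioms for a trace (centrality $\chi(WU) = \chi(UW)$ and positive definiteness $\chi(W^*W) \geq 0$) transfer verbatim. Thus a trace of $\A(\GLU)_n$ corresponds, under this identification, to a central positive-definite function on the finite group $\GL{n}$, i.e. to a character of $\GL{n}$ in the classical sense. (The scaling factor $q^{n(n-1)/2}$, which differs from the factor $(q-1)^n q^{n(n-1)/2}$ appearing for $\A(\GLB)_n$ in Proposition \ref{prop_A_n_as_group_algebra}, is a positive constant and affects neither centrality nor positive definiteness, so no care beyond this remark is needed.)

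Next, I would appeal to the standard fact from the representation theory of finite groups that the convex cone of characters of $\GL{n}$ is simplicial with extreme rays generated by the matrix traces $\chi^f$ of the irreducible representations $\pi^f$. By Green's theorem (Theorem \ref{theorem_irreps_of_gl}), these irreducibles are in bijection with $f\in\CY_n$. Transporting this back through the isomorphism of Proposition \ref{prop_A_n_as_group_algebra_GLU} yields the required unique decomposition
\[
 \chi^n(\cdot) = \sum_{f\in\CY_n} c(f)\,\chi^f(\cdot),\qquad c(f)\ge 0.
\]

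There is no genuine obstacle here; the proposition is a direct analogue of Corollary \ref{cor_decomposition_of_restriction} and its proof is simply a one-line reduction to the finite-dimensional statement via Proposition \ref{prop_A_n_as_group_algebra_GLU}. The only thing worth explicitly flagging in writing is that the notational identification $\chi^f$ (already used on the $\GLB$-side for the matrix trace of the irreducible representation of $\GL{n}$ indexed by $f\in\CY_n$) is unambiguous, because by Proposition \ref{prop_A_n_as_group_algebra_GLU} both $\A(\GLB)_n$ and $\A(\GLU)_n$ are canonically isomorphic to $\mathbb{C}(\GL{n})$, so $\chi^f$ refers to the same irreducible character of $\GL{n}$ in both settings.
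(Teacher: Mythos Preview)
Your proposal is correct and follows essentially the same approach as the paper, which simply states that the proof repeats that of Corollary \ref{cor_decomposition_of_restriction}: identify $\A(\GLU)_n$ with $\mathbb C(\GL{n})$ via Proposition \ref{prop_A_n_as_group_algebra_GLU}, so that traces become characters of the finite group $\GL{n}$, and then invoke the standard fact that these form a simplicial cone on the irreducible characters $\chi^f$, $f\in\CY_n$.
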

\begin{proof} The proof repeats that of Proposition \ref{cor_decomposition_of_restriction}
\end{proof}

\begin{definition}
For two families $f\in\CY_n$ and $g\in\CY_{n-1}$ we say that $g$ precedes $f$ and write
$g\prec_{\GLU} f$ if there exists $a\in\F^*$ for which
\begin{enumerate}
\item $f(``x-a")\setminus g(``x-a")$ is one box,
\item $f(u)=g(u)$ for all $u\ne ``x-a"$.
\end{enumerate}
\end{definition}
Note that this definition is \emph{different} from that of $\prec_\GLB$.

Similarly to Theorem \ref{proposition_branching_of_characters_GLB} one proves the following
statement.
\begin{proposition}
\label{proposition_branching_of_characters_GLU}
 Let $\pi^f$  be the irreducible representation of algebra
 $\A(\GLU)_n$ (equivalently, of the group $\GL{n}$) parameterized by $f\in\CY_n$ and let
$\chi^f$ be its conventional character (i.e.\ matrix trace). The restrictions of $\pi^f$ and
$\chi^f$ to the subalgebra
 $\A(\GLU)_{n-1}$ admit the following decomposition:
$$
\chi^f\rule[-2.5mm]{.4pt}{5mm}_{\,\A(\GLU)_{n-1}}=\sum_{g\prec_{\GLU} f} \chi^g,
$$
equivalently,
$$
 \pi^f \rule[-2.5mm]{.4pt}{5mm}_{\,\A(\GLU)_{n-1}} = \mathcal N \oplus \bigoplus_{g\prec_{\GLU} f} \chi^g,
$$
where $\mathcal N$ is a zero representations of $\A(\GLU)_{n-1}$ of dimension
$\dim(f)-\sum_{g\prec_{\GLU} f} \dim(g)$.
\end{proposition}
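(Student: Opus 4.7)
The plan is to mimic the proof of Theorem \ref{proposition_branching_of_characters_GLB} while keeping careful track of the single structural difference between the embeddings $i_{n-1}$ for $\A(\GLB)$ and for $\A(\GLU)$. Under the isomorphism $\A(\GLU)_n \simeq \mathbb C(\GL{n})$ of Proposition \ref{prop_A_n_as_group_algebra_GLU}, a direct computation from the definition of $Ext^\GLU(g)$ shows that the embedding carries $e_g$ to the uniform average
$$
 i_{n-1}(e_g) = \frac{1}{|U_n^n|}\sum_{u \in U_n^n} e_{\tilde g u},
$$
where $\tilde g \in \GL{n}$ is obtained from $g \in \GL{n-1}$ by appending a $1$ in the $(n,n)$ entry, and $U_n^n \subset \GL{n}$ is the subgroup introduced in Theorem \ref{Theorem_parabolic_restriction}. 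The essential difference with the $\GLB$ case is that the $(n,n)$ entry is pinned at $1$ rather than summed over $\F^\ast$, so the averaging is over $U_n^n$ alone and not over the $\GL{1}$ Levi factor.

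Applying $\pi^f$ to this formula and using the fact that $\tilde g$ normalizes $U_n^n$, one obtains $\pi^f(i_{n-1}(e_g)) = \pi^f(\tilde g)\, P$, where $P$ is the projector onto the subspace $\widehat H(\pi^f)$ of $U_n^n$-invariant vectors. Since $\pi^f(\tilde g)$ commutes with $P$, its trace equals
$$
 \chi^f(i_{n-1}(e_g)) = \mathrm{tr}\!\left(\pi^f(\tilde g)\bigm|_{\widehat H(\pi^f)}\right),
$$
i.e.\ the character of the restriction of $\widehat \pi^f$ along the inclusion $\GL{n-1} \hookrightarrow \GL{n-1} \times \GL{1}$, $g \mapsto (g,1)$. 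By Theorem \ref{Theorem_parabolic_restriction} we decompose $\widehat \pi^f = \bigoplus_i \pi^{f_i} \otimes \pi^{f \setminus f_i}$, where $i$ ranges over the pairs $(y_i, f_i)$ with $y_i \in \C_1$ and $f(y_i) \setminus f_i(y_i)$ a single box. Evaluating the character at $(g,1)$ gives
$$
 \chi^f(i_{n-1}(e_g)) = \sum_i \chi^{f_i}(g)\, \chi^{f \setminus f_i}(1) = \sum_i \chi^{f_i}(g),
$$
because each $\pi^{f \setminus f_i}$ is a one-dimensional character of $\GL{1} = \F^\ast$, so $\chi^{f \setminus f_i}(1) = 1$. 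Since the index set $\{(y_i, f_i)\}$ is precisely $\{f_i : f_i \prec_{\GLU} f\}$, this is the desired branching formula.

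The representation-level statement then follows, because $\pi^f(i_{n-1}(e_g))$ acts as zero on the orthogonal complement of $\widehat H(\pi^f)$ in $H(\pi^f)$; the summand $\mathcal N$ is the zero representation on this complement, whose dimension is exactly $\dim(f) - \sum_{g \prec_{\GLU} f} \dim(g)$ by a dimension count. The only delicate point is bookkeeping the normalization constant $q^{n(n-1)/2}$ that converts $I_g^\GLU$ to $e_g$ and thus identifies $i_{n-1}$ with averaging over $U_n^n$; beyond this, the argument is entirely parallel to the $\GLB$ case, the single conceptual change being that the $\GL{1}$ Levi factor is evaluated at the identity rather than averaged out, which is precisely why $\prec_{\GLU}$ permits the removal of a box at any $``x-a" \in \C_1$ rather than only at $``x-1"$.
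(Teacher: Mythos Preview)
Your proof is correct and follows exactly the approach the paper indicates: the paper's own proof consists of the single sentence ``Similarly to Theorem \ref{proposition_branching_of_characters_GLB} one proves the following statement,'' and you have spelled out precisely that similarity, correctly identifying that the $\GLU$ embedding averages over $U_n^n$ alone (pinning the $(n,n)$ Levi entry at $1$) rather than over $U_n^n$ and $\GL{1}$, so that one evaluates $\chi^{f\setminus f_i}(1)=1$ for every $y_i\in\C_1$ instead of projecting onto the trivial $\GL{1}$-character. Your normalization check and the identification of the index set with $\{g:g\prec_{\GLU} f\}$ are both accurate.
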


We need to introduce some additional notations to state an analogue of Theorem
\ref{theorem_characters_of_GLB} for $\GLU$.

\begin{definition}
$\CY`\subset \CY$ is defined as the set of families $f$ of Young diagrams such that
$f(``x-a")=\emptyset$ for $a\in\F^*$.
\end{definition}

\begin{definition}
 $\Omega(\GLU)$ is the set of quadruples $(\alpha,\beta,\gamma,f)$, where $\gamma=\{\gamma^j\}$,
$j\in\F^*$, $\alpha=\{\alpha^{j}_i\}$,
 $\beta=\{\beta^j_i\}$, $i=1,2,3\dots$, $j\in\F^*$; for every $j$ the sequences $\alpha^{j}_i$ and $\beta^{j}_i$
 and the number $\gamma^j$ satisfy \eqref{eq_summable_sequence}; moreover
$ \sum_{j} \gamma^j= 1$ and  $f\in\CY`$.
\end{definition}

\begin{theorem}[Classification theorem for finite traces of $\A(\GLU)$]
\label{theorem_characters_of_GLU}
 The extreme rays of the set of  traces of $\A(\GLU)$ are parameterized by elements of $\Omega(\GLU)$. For
 $\omega=(\alpha,\beta,\gamma, f)\in \Omega(\GLU)$ the corresponding ray is $\mathbb R_+ \chi^{\omega}(\cdot)$ and for $g\in\GL{n}$ we have
  $  \chi^{\omega}(I^\GLU_g)=0
 $
 if $n<|f|$, otherwise,
 \begin{equation}
 \label{eq_extreme_GLU}
  \chi^{\omega}(I^\GLU_g)=\sum_{\lambda^j\in \Y:\, \sum |\lambda^j|=n-|f|}
  \chi^{f+ \sum_j E_j(\lambda^j)}(I^\GLU_g) \prod_{j\in\F^*} \Sp_{\alpha^j,\beta^j,\gamma^j} [s_{\lambda^j}],
 \end{equation}
  Where $E_j(\lambda)\in\CY$ is a family taking value $\lambda$ in $``x-j"$ $(j\in\F^*)$ and taking
 $\emptyset$ in all other points.
\end{theorem}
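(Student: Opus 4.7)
The plan is to follow the strategy used in the proof of Theorem \ref{theorem_characters_of_GLB} closely, with the key modification that the branching relation $\prec_{\GLU}$ alters values of a family $f\in\CY$ on all linear polynomials $``x-a"$, $a\in\F^*$, rather than only on $``x-1"$.

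For each $f\in\CY`$ let $\CY^{(f)}\subset\CY$ denote the set of families $h$ with $h(u)=f(u)$ for every irreducible polynomial $u$ of degree $\ge 2$, and let $\Upsilon^f$ be the convex cone of traces $\chi$ of $\A(\GLU)$ which vanish on $\A(\GLU)_n$ for $n<|f|$ and whose level $n\ge|f|$ decomposition, via Proposition \ref{prop_decomposition_of_restriction_GLU}, is supported on $\CY^{(f)}$. Because $\prec_{\GLU}$ modifies values of families only on linear polynomials, these cones are stable under the branching and the definition is consistent across levels. The same argument as in the proof of Theorem \ref{theorem_characters_of_GLB} yields an affine isomorphism $\Phi^f:\Upsilon^\emptyset\to\Upsilon^f$ determined by $\Phi^f(\chi)|_{\A(\GLU)_{n+|f|}}=\sum_{h\in\CY_n\cap\CY^{(f)}}c(h)\chi^{h+f}$ (where $\chi|_{\A(\GLU)_n}=\sum c(h)\chi^h$), and every trace of $\A(\GLU)$ admits a unique decomposition $\chi=\sum_{f\in\CY`}\chi^{(f)}$ with $\chi^{(f)}\in\Upsilon^f$.

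It therefore suffices to classify the extreme rays of $\Upsilon^\emptyset$. The portion of the Bratteli diagram of $\A(\GLU)$ relevant to $\Upsilon^\emptyset$ has vertices at level $n$ equal to tuples $\bar\lambda=(\lambda^a)_{a\in\F^*}$ of Young diagrams with $\sum_a|\lambda^a|=n$, and an edge from $\bar\mu$ to $\bar\lambda$ whenever $\bar\lambda$ is obtained from $\bar\mu$ by adding a single box to one of the $\lambda^a$. This multi-colored (with $q-1$ colors) Young graph is the branching diagram of the chain of group algebras $\mathbb C[\F^*\wr S(n)]$ of the wreath products. Its finite traces admit a Thoma-type classification: extreme rays are parameterized by collections $((\alpha^j,\beta^j,\gamma^j))_{j\in\F^*}$ with $\sum_j\gamma^j=1$ and $\sum_i(\alpha^j_i+\beta^j_i)\le\gamma^j$, and the corresponding extreme trace evaluated on the irreducible character $\chi^{\bar\lambda}$ equals $\prod_{j\in\F^*}\Sp_{\alpha^j,\beta^j,\gamma^j}[s_{\lambda^j}]$. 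Combining this classification with $\Phi^f$ and the decomposition $\chi=\sum_f\chi^{(f)}$ yields the formula \eqref{eq_extreme_GLU} and parameterizes the extreme rays of the cone of traces of $\A(\GLU)$ by $\Omega(\GLU)$.

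The main obstacle is the colored Thoma classification for this multi-colored Young graph. Heuristically, an extreme trace corresponds to an ergodic central measure on infinite paths; the frequency of color $j$ among the first $n$ added boxes has an almost sure limit $\gamma^j$ (and the $\gamma^j$ sum to $1$), and conditionally on the color sequence, the subpath of boxes of each color independently describes a Vershik--Kerov--Thoma path in the ordinary Young graph, contributing the factor $\Sp_{\alpha^j,\beta^j,\gamma^j}[s_{\lambda^j}]$. This can be carried out either via the Vershik--Kerov ergodic method following its application to $S(\infty)$, or via a Ring Theorem approach analogous to Proposition \ref{prop_list_of_ergodic} applied to the algebra $\bigotimes_{j\in\F^*}\Lambda$ (with $p_1$-multiplication in the $j$-th tensor factor playing the role of adding one strand of color $j$). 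All other ingredients in the proof are direct transcriptions of the argument for $\GLB$.
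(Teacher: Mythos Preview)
Your proposal is correct and follows essentially the same route as the paper. The paper carries out explicitly the Ring Theorem option you mention at the end: it identifies your $\Upsilon^\emptyset$ (called $\Theta^\emptyset$ there) with the cone of linear functionals on $\Lambda^{\otimes(q-1)}$ that are nonnegative on products $\prod_j s_{\lambda^j}(x^j)$ and invariant under multiplication by $\sum_{j\in\F^*} p_1(x^j)$, applies the Ring Theorem to conclude that extreme functionals are multiplicative, and then uses Thoma's theorem on each tensor factor $\Lambda^j$ separately to obtain the parameters $(\alpha^j,\beta^j,\gamma^j)$ with $\gamma^j=h[p_1(x^j)]$ and $\sum_j\gamma^j=1$.

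The only point worth sharpening in your sketch is the coherence condition: adding one box (of \emph{some} color) corresponds to multiplication by the total $p_1$, i.e.\ by $\sum_{j\in\F^*} p_1(x^j)$, not by $p_1$ in a single tensor factor; this is what forces $\sum_j\gamma^j=1$ while allowing the individual $\gamma^j$ to be arbitrary nonnegative numbers. Your alternative framing via the wreath product $\F^*\wr S(\infty)$ is a legitimate shortcut (the character theory of $G\wr S(\infty)$ for finite $G$ is known and gives exactly this answer), but the paper's self-contained Ring Theorem argument avoids importing that result.
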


\begin{proof}
 The argument starts similarly to that of Theorem \ref{theorem_characters_of_GLB}.
 For a family $f\in\CY`$ let $\CY^{[f]}\subset\CY$ denote the set of families $h\in\CY$ such that
 $h(u)=f(u)$ for all $u\in\mathcal C\setminus \bigcup_{j\in\F^*}\{``x-j"\}$.

 Moreover, for a family $f\in\CY`$ let $\Theta^f$ denote the convex cone of  traces $\chi$ of $\A(\GLU)$ such that
 such that for $n<|f|$ the restriction $\chi\rule[-2.3mm]{.4pt}{4mm}_{\,\A(\GLU)_{n-1}}$ vanishes and for $n\ge |f|$
 in the decomposition
$$
 \chi \rule[-2.3mm]{.4pt}{4mm}_{\,\A(\GLU)_{n}} =\sum_{h\in\CY_n} c(h) \chi^h(\cdot).
$$
$c(h)=0$ unless $h\in\CY^{[f]}$. Let $\Theta^\emptyset$ denote the set $\Theta^f$ for $f$ being
the empty family. Similarly to the proof of Theorem \ref{theorem_characters_of_GLB} for any
$f\in\CY`$ the convex cone $\Theta^f$ is affine isomorphic to $\Theta^\emptyset$ and the statement
of Theorem \ref{theorem_characters_of_GLU} is reduced to the identification of all extreme rays of
$\Theta^\emptyset$. The rest of the proof is this identification.

Take $q-1$ countable sets of variables $(x^j_i)_{i=1,2,\dots}$, $j\in\F^*$ and let
$\Lambda^{\otimes (q-1)}$ denote the algebra of (polynomial) functions symmetric in variables
$x^j_i$, $i=1,2,\dots$ for every fixed $j\in\F^*$. Let $\Lambda^j$ denote the subalgebra of
symmetric functions in $x^j_1, x^j_2,\dots$. For any $j$ and any symmetric function $r\in\Lambda$
let $r(x^j)$ denote the corresponding symmetric function in variables $x^j_1, x^j_2,\dots$. In
particular, $p_k(x^j)\in \Lambda^j$ are the Newton power sum in variables $x^j$
$$
 p_k(x^j)=\sum_{i=1}^{\infty} (x^j_i)^k.
$$
Clearly, the functions
$$
 \prod_{j\in \F^*} s_{\lambda^j}(x^j),\quad \lambda^j\in\Y
$$
form a linear basis in $\Lambda^{\otimes (q-1)}$.

Let $\Delta$ denote the cone of linear functionals $w$ on $\Lambda^{\otimes (q-1)}$ satisfying:
$$
 w: \Lambda^{\otimes (q-1)}\to\mathbb C
$$
\begin{enumerate}
\item
 $w\left[\prod_{j\in\F^*} s_{\lambda^j}(x^j)\right]\ge 0$ for any
    $q-1$ Young diagrams $\{\lambda^j\}$.
\item
$w\left[u(\sum_{j\in \F^*} p_1(x^j))\right]=w[u]$, for any $u\in \Lambda^{\otimes (q-1)}$.
\end{enumerate}

We claim that $\Delta$ and $\Theta^\emptyset$ are affine isomorphic. Under this identification a
 trace $\chi\in\Theta^\emptyset$ corresponds to a functional $w_\chi$ such that
$$
 w_\chi\left[\prod_{j\in\F^*} s_{h(``x-j")}(x^j)\right]=c(h)
$$
for any $h\in\CY^{[\emptyset]}$ and for $h\in \CY^{[\emptyset]}\bigcap \CY_n$ the number $c(h)$ is
defined as the coefficient in the decomposition
$$
 \chi \rule[-2.3mm]{.4pt}{4mm}_{\,\A(\GLU)_{n}} =\sum_{h\in\CY_n} c(h) \chi^h(\cdot).
$$
Indeed, condition 1 translates into the non-negativity of the coefficients $c(h)$ and condition 2
translates into the statement of Proposition \ref{proposition_branching_of_characters_GLU}.

Now we can again use the \emph{Ring Theorem} (see \cite{VK_ring}, \cite{VK_Long}, \cite{Kerov_book} and also
\cite[Section 8.7]{GO_Ring}) for studying the structure of the set $\Delta$. This theorem yields
that $h\in\Delta$ is an element of an extreme ray (in other words, $h$ is indecomposable) if and
only if $h=r \hat h$, where $r\in\mathbb R_+$ and $\hat h\in\Delta$ is a multiplicative
functional, i.e.\ $\hat h(uv)=\hat h(u)\hat h(v)$.

Now let $h\in\Delta$ be a multiplicative functional and let $h^j$, $j\in\F^*$ be its restrictions
on $\Lambda^j$. Clearly,
\begin{equation}
\label{eq_x5}
 h\left[\prod_{j\in\F^*} s_{\lambda^j}(x^j)\right]=\prod_{j\in\F^*} h^j[s_{\lambda^j}(x^j)]
\end{equation}
Define a new linear functional $\widehat h^j$ on $\Lambda^j$ through
$$
\widehat h^j[u]:=\frac{h^j[u]}{\left(h^j[p_1(x^j)]\right)^{deg(u)}},
$$
where $deg(u)$ is the degree of polynomial $u$. The functional $\widehat h^j$ on $\Lambda^j$ is
multiplicative and satisfies
\begin{enumerate}
\item $\widehat h^j [s_\lambda(x^j)]\ge 0$ for every $\lambda\in \Y$,
\item $\widehat h^j [u p_1(x_j)]=h^j[u]$,
\item $h^j[p_1 (x_j)]=1$.
\end{enumerate}
Such functional are classified by Thoma's theorem, they correspond to extreme points of the set of
normalized characters of $S(\infty)$, see \cite{VK_Long}, \cite{Kerov_book}. They are
parameterized by sequences $\alpha,\beta$, satisfying \eqref{eq_summable_sequence} with
$\gamma=1$. We have:
$$
 \widehat h^j[s_\lambda(x^j)]= \Sp_{\alpha,\beta,1} [s_\lambda].
$$
 Now set $\gamma_j=h^j[p_1(x^j)]$. Then we have
$$
 h^j[s_\lambda(x^j)]= \Sp_{\alpha^j,\beta^j,\gamma^j}[s_\lambda]
$$
for certain $\alpha^j,\beta^j,\gamma^j$. Substituting into \eqref{eq_x5} we arrive at the desired
statement.



\end{proof}

\bigskip

 Theorem \ref{theorem_characters_of_GLU} yields that for $\GLU$ and
$\omega=(\alpha,\beta,f)\in\Omega({\GLU})$ the  trace $\chi^\omega$ is unipotent if
$f\equiv\emptyset$ and $\alpha^j_i=\beta^j_i=0$ for $j\ne 1$. Note that if we identify
$\A({\GLB})_n\simeq\mathbb C(\GL{n})\simeq \A({\GLU})_n$, then unipotent  traces of $\A(\GLB)$ and
$\A(\GLU)$ are \emph{the same} functions.


\begin{thebibliography}{ABCD}

\bibitem[AG]{AG} A.~Aizenbud, D.~Gourevitch, Multiplicity free Jacquet modules, Canadian Mathematical Bulletin,  arXiv:0910.3659

\bibitem[Be]{Ber} I.~N.~Bernshtein, All reductive p-adic groups are tame, Functional Analysis and Its Applications, 1974, 8:2, 91--93.

\bibitem[BZ]{BZ}  I.~N.~Bernstein, A.~V.~Zelevinsky, Induced representations of reductive
p-adic groups. I,   Ann. Sci. Ec. Norm. Sup. (4) 10 (1977), no.\ 4, 441--472.

\bibitem[B1]{Borodin_GL_short} A.~M.~Borodin, Limit Jordan normal form of large triangular matrices over a finite field,
 Functional Analysis and Its Applications, 1995, 29:4, 279--281

\bibitem[B2]{Borodin_GL} A.~M.~Borodin, The law of large numbers and the central limit theorem for the jordan normal form of large triangular matrices over a finite field,
 Journal of Mathematical Sciences (New York), 1999, 96:5, 3455--3471

\bibitem[Bo]{Bo} R.~P.~Boyer, Infinite Traces of AF-algebras and characters of $U(\infty)$,
J. Operator Theory 9 (1983), pp.\ 205--236.


\bibitem[BO]{BO} A.~Borodin, G.~Olshanski, The boundary of the Gelfand--Tsetlin graph: a new approach,
Advances in Mathematics, 230 (2012). pp. 1738-1779. arXiv:1109.1412.

\bibitem[BO2]{BO2} A.~Borodin, G.~Olshanski, The Young Bouquet and its boundary, Moscow Mathematical
Journal, 13, no.\ 2 (2013), 193--232, arXiv:1110.4458.

\bibitem[Bou]{Bourbaki} N.~Bourbaki, Lie Groups and Lie agebras, Chapters 4--6, Springer, 2002.

\bibitem[Br]{Br}  O.~Bratteli, Inductive limits of finite dimensional $C^*$-algebras.
Trans. Amer. Math. Soc. 171 (1972), 195--234.

\bibitem[CF]{CF} C.~W.~Curtis, T.~V.~Fossum, On Centralizer Rings  and Characters
of Representations  of Finite Groups, Math. Zeitschr.  107, 402--406 (1968)

\bibitem[D]{Dixmier} J.~Dixmier, $C^*$--algebras, North-Holland, 1977

\bibitem[F]{F} D.~.K.~Fadeev, Complex representations of general linear group over a finite
field. Zap. Nauchn. Sem. LOMI, 1974, Vol. 46, Pages 64–88

\bibitem[FM]{FM} J.~Feldman, C.~C.~Moore, Ergodic equivalence relations, cohomology,
and von Neumann algebras. I \& II. Trans. Amer. Math. Soc. 234 (1977), no. 2, 289--324 \&
325--359.

\bibitem[Fu]{Fu} J.~Fulman,  Random matrix theory over finite
fields,  Bull. Amer. Math. Soc. 39 (2002), 51-85.

\bibitem[GGP]{GGP} I.~M.~Gelfand, M.~I.~Graev, I.~Piatetski--Shapiro,
Representation theory and automorphic functions (translated from Russian by K.~A.~Hirsch),
Saunders, 1968.


\bibitem[GO1]{GO_Ring} A.~Gnedin, G.~Olshanski, Coherent Permutations with Descent Statistic
and the Boundary Problem for the Graph of Zigzag Diagrams, Int. Math. Res. Notices (2006) 2006

\bibitem[GO2]{GO_qPascal} A.~Gnedin, G.~Olshanski,  A $q$-analogue of de Finetti's theorem. Electron. J.
Combin. 16 (2009), no. 1, Research Paper 78.

\bibitem[GO2]{GO_q} A.~Gnedin, G.~Olshanski1, $q$-exchangeability via quasi-invariance. The Annals of Probability
2010, Vol. 38, No. 6, 2103--2135.


\bibitem[G]{G} V.~Gorin, The q-Gelfand-Tsetlin graph, Gibbs measures and q-Toeplitz matrices,
Advances in Mathematics, 229 (2012), no.\ 1, 201--266, arXiv:1011.1769.



\bibitem[GP]{GP} V.~Gorin, G.~Panova, Asymptotics of symmetric polynomials with applications
  to statistical mechanics and representation theory, arXiv:1301.0634.

\bibitem[Go]{Goryachko} E.~E.~Goryachko, Simplicity of the branching of representations of the groups $\GL{n}$
 under the parabolic restriction: An elementary proof, Functional Analysis and Its Applications, 2010, 44:2,
 146--150.

\bibitem[Gr]{Green} J.~A.~Green, The Characters of the Finite General Linear Groups,
 Transactions of the American Mathematical Society, Vol. 80, No. 2 (Nov., 1955), pp.402--447

\bibitem[GH]{GuHo} Ch.~Gupta, W.~Holubowski, Commutator subgroup of Vershik-Kerov gorup,
Linear Algebra and its Applications, 436 (2012) 4279--4284.

\bibitem[Ha]{Hahn} P.~Hahn, The regular representations of measure groupoids. Trans. Amer.
Math. Soc. 242 (1978), 35--72.

\bibitem[Ho]{Ho} W.~Holubowski. Parabolic subgroup of Vershik-Kerov group,
Proc.\ Amer.\ Math.\ Soc, 130(2002) 2579-2582.

\bibitem[Hu]{Hum} J.~E.~Humphreys, The Steinberg Representarion, Bulletin (New Series) of the American Mathematical Society,
Vol.\ 16, no.\ 2, (1987), 247--263.

\bibitem[I]{I} N.~Iwahori, On the structure of the Hecke ring for a Chevalley group over a finite
field, J.~ Faculty Science, Tokyo University, 10 (part 2), (1964), 215--236.


\bibitem[J2]{James_unipotent} G.~James, Unipotent Representations of the Finite General Linear Groups.
 J. Algebra, Vol. 74, no.\ 2, pp. 443--465, 1982


\bibitem[KM]{KM} M. Kapovich and J.J. Millson, A path model for geodesics in Euclidean buildings and its
applications to representation theory, Groups, Geometry and Dynamics, Vol 2, 2008, p. 405-480.
arXiv: math.RT/0411182.

\bibitem[K03]{Kerov_book} S.~Kerov, Asymptotic Representation Theory of the Symmetric Group and its Applications in
Analysis, Amer. Math. Soc., Providence, RI,  2003.

\bibitem[KOO]{KOO} S.~Kerov, A.~Okounkov, G.~Olshanski,  The boundary of Young graph with Jack
edg e multiplicities, Internt. Math. Res. Notices (1998) no.\ 4, 173--199.


\bibitem[KV80]{VK_ring} S.~V.~Kerov, A.~M.~Vershik, Characters, factor representations and K-functor of the infinite
symmetric group." In "Operator algebras and group representation", vol.II (Neptun, 1980). Monogr. Stud. Math. 18, 23-32. Pitman, Boston, 1984.

\bibitem[KV86]{VK_RSK} S.~V.~Kerov, A.~M.~Vershik, The characters of the infinite symmetric group and probability
properties of the Robinson-Schensted-Knuth algorithm. SIAM J.\ Alg.\ Disc.\ Meth., Vol.7, No. 1,
1986.




\bibitem[Kr]{Kr} W.~Krieger, On constructing non-*-isomorphic hyperfinite factors of type III, J. Functional
Analysis 6 (1970), 97--109

\bibitem[M]{M} I.~Macdonald, Symmetric Functions and Hall Polynomials, Clarendon
Press Oxford, 1979.

\bibitem[Me]{Me} P.-L.~ M\'{e}liot, A central limit theorem for the characters of the infinite symmetric group and of the infinite
Hecke algebra , arXiv:1105.0091

\bibitem[Mo]{Moore} C.~C.~Moore, Calvin C. Virtual groups 45 years later.
In Group representations, ergodic theory, and mathematical physics: a tribute to George W. Mackey
Contemp. Math. 449, 263--300, Amer. Math. Soc. Providence, RI, 2008.

\bibitem[MN]{MN} F.~J.~Murray, J.~von Neumann, On rings of operators. I, Ann. of Math. (2) 37, no. 1 (1936),
116--229

\bibitem[N]{N} J.~von Neumann, On rings of operators. III, Ann. of Math. (2), 41, no. 1, (1940),
94--161

\bibitem[Ok]{Ok_S} A.~Yu.~Okounkov, Thoma's theorem and representations of ininite bisymmetric group, Funct.
Analysis and its Appl. 28 (1994), pp.\ 101--107.

\bibitem[OO]{OkOlsh} A.~Okounkov, G.~Olshanski, Asymptotics of Jack Polynomials as the
Number of Variables Goes to Infinity, International Mathematics Research Notices 13 (1998), pp.\
641--682.


\bibitem[O1]{Olsh_Howe_short} G.~Olshanski, Unitary representations of infinite-dimensional pairs (G; K) and the formalism
of R. Howe. Soviet Math. Dokl. 27, no. 2 (1983), 290--294.

\bibitem[O2]{Olsh_Howe} G.~Olshanski, Unitary representations of infinite-dimensional pairs (G; K) and the formalism
of R. Howe. In: Representations of Lie groups and related topics. Advances in Contemp. Math., vol.
7 (A. M. Vershik and D. P. Zhelobenko, editors). Gordon and Breach, N.Y., London etc. 1990,
269--463

\bibitem[O3]{Olsh_S} G.~Olshanski, Unitary representations of (G; K)-pairs connected with the infinite symmetric
group $S(\infty)$. Leningrad Math. J. 1, no. 4 (1990), 983--1014

\bibitem[O3]{Olsh_Semi} G.~Olshanski, On semigroups related to infinite-dimensional groups.
In: Topics in representation theory (A.~A.~Kirillov, ed.). Advances in Soviet Math., vol. 2. Amer. Math. Soc.,
Providence, R.I., 1991, 67--101

\bibitem[Pe]{P} L.~Petrov, The Boundary of the Gelfand-Tsetlin Graph: New Proof of Borodin-Olshanski's Formula, and its
q-analogue,  arXiv:1208.3443.

\bibitem[Pu71]{Pu1} L.~Puk\'{a}nszky, Unitary representations of solvable Lie groups,
Ann. Sci. Ecole Norm. Sup. 4 (1971), 457--608.

\bibitem[Pu74]{Pu2} L.~Puk\'{a}nszky, Characters of connected Lie groups, Acta Math.
133 (1974), 81--137.


\bibitem[Ra]{R06} A.~Ram, Alcove walks, Hecke algebras, spherical functions, crystals and column strict tableaux, Pure
Appl. Math. Q. 2 no. 4 (2006) 963--1013.

\bibitem[Re]{Renault} J.~Renault. A groupoid approach to C*-algebras. Lecture Notes in Mathematics, 793. Springer,
Berlin, 1980.

\bibitem[Sa]{Sagan} B.~E.~Sagan, The Symmetric group: Representations, Combinatorial Algorithms,
and Symmetric Functions, Second Edition, Springer, 2001.

\bibitem[Sc]{Sc06} C.~Scwher, Galleries, Hall-Littlewood polynomials, and structure constants of the spherical Hecke
algebra, Int. Math. Res. Not. (2006) 1--31.

\bibitem[Sk]{Skudlarek} H.-L.~Skudlarek,  Die unzerlegbaren Charaktere einiger discreter Gruppen,
Mathematische Annalen, 223 (1976), 2130--231.

\bibitem[St]{Steinberg} R.~Steinberg, A geometric approach to the representations of the full linear group over a Galois
field, Trans. Amer. Math. Soc. 71 (1951), 274--282.

\bibitem[SV]{SV} S.~Stratila, D.~Voiculescu, Representations of AF-alebras and of the group $U(\infty)$,
Springer, 1975.

\bibitem[Th84]{Th_characters} E.~Thoma, Characters of infinite groups, in: Gr.~Arsene, S.~Str\"{a}til\"{a},
A.~Verona, D.~Voiculescu (Eds.), Operator Algebras and Group Representations, Vol. 2, Pitman,
London, 1984, pp. 23--32.

\bibitem[Th71]{Th_restrictions} E.~Thoma, Die Einschr\"{a}nkung der Charactere von $\GL{n}$ auf
$\GL{n-1}$. Math. Z., 119, pp. 321--338, 1971.

\bibitem[Th72]{ThomaGL} E.~Thoma,  Characters of the group $GL(\infty,q)$, In:  L. Notes Math. 266(1972),
Springer, NY, 321--323.

\bibitem[Th64]{Thoma1} E.~Thoma,
Die unzerlegbaren, positiv-definiten Klassenfunktionen der abz{\"a}hlbar unendlichen,
symmetrischen Gruppe, Mathematische Zeitschrift,  85 (1964), 40--61.


\bibitem[TV]{VT} N.~V.~Tsilevich, A.~M.~Vershik, On different models of representations of the infinite
symmetric group, Advances in Applied Mathematics 37 (2006) 526--540.

\bibitem[V82]{James} A.~M.~Vershik, Symmetric Functions and $K$--functor, in the appendix of translation
editor to G.~James, Representation theory of symmetric groups (RUSSIAN edition), Moscow, Mir,
1982.

\bibitem[V03]{V_lectures} A.~M.~Vershik, Two lectures on the asymptotic representation theory and statistics of Young
diagrams, Lect. Notes Math. 1815 (2003), 161--182.

\bibitem[V11]{V_nonfree} A.~M.~Vershik, Nonfree actions of countable groups and their characters,
Journal of Mathematical Sciences (New York), 2011, 174:1, 1--6

\bibitem[V12]{V_nonfree2} A.~M.~Vershik, Totally nonfree actions and the infinite symmetric group.
 Moscow Math. J. 12, No. 1 (2012). pp.193-212.


\bibitem[VK81]{VK_S} A.~M.~Vershik, S.~V.~Kerov, Asymptotic theory of characters of the symmetric group, Funct.
Anal. Appl. 15  (1981), no.\ 4, 246--255

\bibitem[VK82]{VK_U} A.~M.~Vershik, S.~,V.~Kerov, Characters and factor representations of the
inifinite unitary group, Sov. Math. Dokl. 26 (1982), pp.\ 570--574.



\bibitem[VK87]{VK_AF} A.~M.~Vershik, S.~V.~Kerov, Locally semisimple algebras, combinatorial
theory and $K_0$--functor, J. Sov. Math. v.38 (1987) 1701--1733.


\bibitem[VK89]{VK_Hecke} A.~M.~Vershik, S.~V.~Kerov, Characters and realizations of
infinite-dimensional Hecke algebra and knot invariants, Sov. Math. Dokl. v.38 (1989) 134--137.

\bibitem[VK90]{VK_Long} A.~M.~Vershik, S.~V.~Kerov, The Grothendieck group of the infinite symmetric group
and symmetric functions (with the elements of the theory of K0-functor of AF-algebras) (with
S.V.Kerov). In "Representation of Lie groups and related topics". Adv. Stud. Contemp. Math. 7,
39--117 (1990).



\bibitem[VK98]{VK98} A.~M.~Vershik, S.~.V.~Kerov, On a infinite-dimensional group over a finite field,
Functional Analysis and Its Applications, 1998, 32:3, 147--152

\bibitem[VK07]{VK_4d} A.~M.~Vershik, S.~V.~Kerov, Four drafts of the representation theory of the
group of infinite matrices over a finite field, arXiv:0705.3605, Journal of Mathematical Sciences,
 Vol. 147, no.\ 6 (2007), 7129--7144.

\bibitem[Vo1]{Vo_multi}  D.~Voiculescu,   Sur les representations factorielles finies de $U(\infty)$ et autres groupes
semblables C.R. Acad. Sci. Paris ser. A 279 (1974), 945--946.


\bibitem[Vo2]{Vo}
 D.~Voiculescu, Repr\'esentations factorielles de type \rm{II}${}_1$ de $U(\infty)$,
 J. Math. Pures et Appl. 55 (1976), pp.\ 1--20.


\bibitem[Y]{Y10} M.~Yip, A Littlewood-Richardson rule for Macdonald polynomials, Mathematische Zeitschrift, 2012, arXiv:1010.0722.

\bibitem[Zel]{Zel} A.~V.~Zelevinsky, Representations of Finite Classical Groups: A hopf algebra
approach, Lecture Notes in Mathematics, 869. Springer-Verlag, Berlin-New York, 1981

\end{thebibliography}
\end{document}